\newcommand{\R}{\mathbb R}
\newcommand{\C}{\mathbb C}
\newcommand{\N}{\mathbb{N}}
\newcommand{\les}{\lesssim}
\def\d{{\rm d}}
\newtheorem{theorem}{Theorem}[section]
\newtheorem{proposition}[theorem]{Proposition}
\newtheorem{lemma}[theorem]{Lemma}
\newtheorem{remark}[theorem]{Remark}
\numberwithin{equation}{section}
\title[The $L^p$ boundedness of wave operators for finite rank perturbations]{The $L^p$ boundedness of wave operators for the Laplace operator with finite rank perturbations}
\author{Han Cheng, Shanlin Huang, Avy Soffer, Zhao Wu}
\address {Han Cheng, Institute of Applied Physics and Computational Mathematics, Beijing, 100088, People's Republic of China}
\email{chmathh@163.com}
\address {Shanlin Huang, School of Mathematics (Zhuhai), Sun Yat-sen University, Zhuhai 519082, Guangdong, China}
\email{huangshlin6@mail.sysu.edu.cn}
\address {Avy Soffer, Department of Mathematics, Rutgers University, 110 Frelinghuysen Rd., Piscataway, NJ, 08854, USA}
\email{soffer@math.rutgers.edu}
\address {Zhao Wu, Faculty of Mathematics and Statistics, Hubei University, Wuhan, 430062, People's Republic of China}
\email{wuzhao@hubu.edu.cn}
\subjclass[2000]{35P25, 35J10, 42B37, 81Q15.}
\keywords{$L^p$ boundedness, Schr\"odinger equation, Finite rank perturbation, Aronszajn-Krein formula}
\begin{document}
	\maketitle
	\begin{abstract}
		This paper investigates the $L^p$ boundedness of wave operators for the Laplace operator  with  finite rank perturbations 
		$$
		H=-\Delta+\sum\limits_{i=1}^N\langle\cdot\,, \varphi_i\rangle \varphi_i \qquad \mbox{on}\,\,\, \R^d.
		$$
		For dimensions $d\ge 3$, we prove that  the  wave operators $W_\pm(H,H_0)$ are bounded on $L^p$   for the full range $1\le p\le \infty$. This extends the work of Nier and the third author \cite{NS}  by resolving the previously  unexplored question of boundedness at the endpoint cases  $p=1$ and $p=\infty$. 
		In lower dimensions $d = 1, 2$, we establish the $L^p$-boundedness of the wave operators for the first time. Furthermore, we reveal an intriguing dichotomy in the endpoint case  $p = 1$:  
		\begin{itemize}
			\item If  $\int_{\mathbb{R}^d} \varphi_i(x) \, \d x = 0$  holds for every $1\le i\le N$, then the wave operators are bounded on $L^p(\mathbb{R}^d)$ for all $1 \leq p \leq \infty$.  
			\item If there exists at least one $i$ ($1\le i\le N$)   such that $\int_{\mathbb{R}^d}\varphi_i(x)\d x\ne0$, then the wave operators remain bounded for $1 < p < \infty$ and satisfy weak type $(1,1)$ estimates, but fail to be bounded on $L^1(\mathbb{R}^d)$.  
		\end{itemize}
		
		
	\end{abstract}
	

	\section{Introduction and main results}
	\setcounter{equation}{0}
	\subsection{Background}
	
	In the context of time-dependent scattering theory, we consider the Hilbert space $\mathcal{H}:=L^2(\mathbb{R}^d)$, where  the free Schr\"{o}dinger operator is given by $H_0:=- \Delta$ and the perturbed Schr\"{o}dinger operator  is defined as $H := - \Delta + V$. Here $V$ is a measurable, bounded real function on $\mathbb{R}^d$ that decays sufficiently rapidly at infinity. It is well known that the wave operators, defined by the strong limits in $L^2(\mathbb{R}^d)$
	$$
	W_\pm(H,H_0):= s-\lim_{t \to \pm \infty} e^{itH} e^{-itH_0},
	$$
	exist and are asymptotically complete, see e.g. in \cite{Reed-Simon-III}.
	Consequently,  the scattering operator $S := W^*_+ W_-$ is  unitary on $\mathcal{H}$.
	
	Among the numerous properties of wave operators, their mapping behavior between $L^p$ spaces for $p\ne 2$ has been a subject of significant interest since Yajima's seminal work \cite{Yajima95,Yajima95-even}. Over the past three decades, this topic has been extensively studied, with results varying depending on the spatial dimension and the presence or absence of eigenvalues and resonances at zero energy. For instance, sharp results for  $d=3$ can be found in \cite{Beceanu},
	while cases for $d=1,2$ are treated in \cite{AF06,Gal-Yaj-00,JY02,Weder99,Yajima99}.  When zero energy is not regular, we refer to \cite{EGG18,FY06,GG16,GG17,Yajima06,Yajima16,Yajima22-2d,Yajima22-4d}  and the references  therein.
	More recently, the $L^p(\mathbb{R}^n)$ boundedness of the wave operators  Schr\"{o}dinger operator with inverse square potential was established in \cite{MSZ}, and \cite{Ya21} with point
	interactions.  For the higher order  Schr\"{o}dinger operator $H=(-\Delta)^m+V$,  we refer to  \cite{GY23,GG21,MWY22,MWY23-1,MWY23-2}.

	\textbf{Aim and Motivation.} 
	The main goal of this paper is to investigate the $L^p$ boundedness of the wave operator for the  Schr\"{o}dinger operator  under finite rank perturbations:
	\begin{equation}\label{eq-H}
		H=H_0+\sum_{j=1}^NP_j, \,\,\,\,\, H_0=-\Delta, \,\,\, P_j=\langle\cdot\,, \varphi_j\rangle \varphi_j.
	\end{equation}
	The theory of finite rank perturbations can be traced back to a seminal paper in
	1910 by Weyl \cite{Wey}, where they were introduced as a tool to determine the spectrum of
	Sturm-Liouville operators. They also  arise in a number of problems in mathematical physics.
	For example, Simon and Wolff \cite{SW} found a relationship between rank one perturbation and discrete random Sch\"{o}dinger operators, and applied it to the Anderson localization conjecture.
	We note that \eqref{eq-H} can be viewed as the Sch\"{o}dinger Hamiltonian with a large number of obstacles, and each obstacle is described by a rank one perturbation.
	It deserves to mention that despite the formal simplicity, the study of this restrictive perturbation is very rich and interesting, we refer to \cite{Simon-1995} for the spectral analysis and the survey paper \cite{Liaw} for other problems.

	Our work is partly inspired by the result of Nier and the third author \cite[Theorem 2.1]{NS}, which established that  under rank one perturbations, the wave operators  $W_\pm(H_\alpha,H_0)$  are bounded on $L^p(\R^d)$ for   $1<p<\infty$
	in dimensions  $d\ge 3$.  This result holds under a spectral  assumption (see \eqref{eq-spectral condition}) and the following decay and smoothness condition
	\begin{equation*}\label{eq1.6}
		\langle x\rangle^s \langle D\rangle^{\beta}\varphi\in L^2(\mathbb{R}^d), \,\,\text{for}\,\,s>[\frac{d}{2}]+2,\,\,\,\beta\ge\max\{\frac{d-3}{2}, \frac12[\frac{d}{2}]\}.
	\end{equation*}
	However,  the end-point cases $p=1$ and $p=+\infty$ remain unknown, and the problem has not yet been resolved for dimensions $d=1,2$.
	In light of existing results for  Schr\"{o}dinger operators mentioned above, a natural question arises: Can the boundedness of the wave operator for the Hamiltonian \eqref{eq-H}  extended to all dimensions?
	This stu\d y aims to bridge this gap by establishing $L^p$ boundedness in  arbitrary dimensions, with a particular focus on proving a dichotomy for the cases $d=1$ and $d=2$. 
	
	\subsection{Main results}
	We begin by examining rank one perturbations. Specifically, consider
	\begin{equation}\label{eq-rank one perturbation}
		H_\alpha=H_0+\alpha\langle\cdot,\varphi\rangle\varphi,\ \ \ H_0=-\Delta,\ \ \ \alpha>0,
	\end{equation}
	where $\varphi\in L^2.$ Throughout the paper, $\langle\cdot,\cdot\rangle$ denotes the standard  inner product in $L^2(\R^d).$
	
	Before stating our assumptions on $\varphi,$ we note that the following function 
	\begin{equation}\label{eq-F function}
		F(z):=\langle (-\Delta-z)^{-1}\varphi,\varphi\rangle=\int\frac{d\mu_{H_0}^\varphi(\lambda)}{\lambda-z},\quad z\in\C\setminus[0,\infty),
	\end{equation}
	plays a key role in our analysis. Moreover, if $\varphi$  belongs to the weighted $L^2$ space $L^2_\sigma$ with $\sigma>\frac{1}{2},$ the well-known limiting absorption principle (see e.g. \cite{Agmon}) ensures that the boundary values
	\begin{equation}\label{eq-F function-limiting absorption}
		F^\pm(\lambda):=\langle (-\Delta-\lambda\mp i0)^{-1}\varphi,\varphi\rangle
	\end{equation}
	are well defined when $\lambda\in\mathbb{R}\setminus\{0\}$ and coincide on $(-\infty,0).$
	
	\textbf{Condition} $H_\alpha.$ Let $\alpha>0$ and let $\varphi$ be a $L^2$ normlized real valued function on $\mathbb{R}^d$ satisfying:
	
	(i) \textbf{Decay and smoothness}: For $d\ge 1,$ for any $\delta>d+2$, $\varphi(x)$ satisfies
	\begin{equation}\label{eq-decay condition}
		|\varphi(x)|\lesssim \langle x\rangle^{-\delta}.
	\end{equation}
	Additionally, when $d\ge 2,$ we assume that $\varphi(x)\in C^{\beta_0}(\mathbb{R}^d)$ with $\beta_0=\big[\frac{d}{2}\big]$, and satisfies 
	\begin{equation}\label{eq-smoothness condition}
		|\partial^\beta_x\varphi(x)|\lesssim \langle x\rangle^{-\delta},\ \ \ |\beta|\le\beta_0.
	\end{equation}
	
	(ii) \textbf{Spectral assumption}: For $d\ge 1,$ there exists an absolute constant $c_0>0$ such that
	\begin{equation}\label{eq-spectral condition}
		|1+\alpha F^\pm(\lambda^2)|\ge c_0,\,\,\, \lambda>0,
	\end{equation}
	where the functions $F^\pm(\lambda^2)$ are given by \eqref{eq-F function-limiting absorption}. 
	
	These conditions ensure that the spectrum of  $H_\alpha$ is absolutely continuous, as demonstrated in \cite[Lemma 2.6]{NS} and \cite[Lemma B.1]{CHZ23}. Since the result differs between $d\ge 3$ and $d=1,2$, we present them separately.
	
	\begin{theorem}\label{thm-main result-rank one-d>3}
		For dimensions $d\ge 3$, if $\varphi$ satisfies \textbf{Condition $H_\alpha$}, then the wave operators $W_\pm(H_\alpha,H_0)$ are bounded on $L^p(\mathbb{R}^d)$ for all $1\le p\le \infty.$
	\end{theorem}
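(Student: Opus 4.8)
Since the identity is bounded on every $L^p$, it suffices to prove that $\Omega_\pm:=W_\pm(H_\alpha,H_0)-I$ is bounded on $L^1(\R^d)$ and on $L^\infty(\R^d)$; all intermediate exponents $1<p<\infty$ then follow by Riesz--Thorin interpolation. The first step is an explicit formula for $\Omega_\pm$. Because $V=\alpha\langle\cdot,\varphi\rangle\varphi$ has rank one, the Aronszajn--Krein (Sherman--Morrison) identity
\[
(H_\alpha-z)^{-1}=(H_0-z)^{-1}-\frac{\alpha}{1+\alpha F(z)}\,\bigl\langle\,\cdot\,,\,(H_0-\bar z)^{-1}\varphi\bigr\rangle\,(H_0-z)^{-1}\varphi ,\qquad z\in\C\setminus[0,\infty),
\]
passes to the boundary values $z=\lambda^2\pm i0$ thanks to the limiting absorption principle and the spectral assumption \eqref{eq-spectral condition}. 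Inserting this into the stationary representation of the wave operators and using the rank one structure, all the operator structure collapses, and one finds that $\Omega_\pm$ is the integral operator with kernel
\[
\Omega_\pm(x,y)=-\alpha\int_{\R^d}\frac{\overline{\hat\varphi(\xi)}\;e^{-iy\cdot\xi}}{1+\alpha F^\pm(|\xi|^2)}\,\bigl[R_0^\pm(|\xi|^2)\varphi\bigr](x)\,d\xi ,\qquad R_0^\pm(\mu)=(-\Delta-\mu\mp i0)^{-1}
\]
(up to a constant depending only on $d$). Hence it remains to prove the two Schur-type bounds $\sup_x\int_{\R^d}|\Omega_\pm(x,y)|\,dy<\infty$ and $\sup_y\int_{\R^d}|\Omega_\pm(x,y)|\,dx<\infty$.

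\textbf{Ingredients.} Three facts, all consequences of \textbf{Condition $H_\alpha$}, drive the estimate. First, the arbitrarily fast polynomial decay \eqref{eq-decay condition} makes $\hat\varphi$ of class $C^\infty$, and with \eqref{eq-smoothness condition} one gets $|\partial_\xi^\beta\hat\varphi(\xi)|\lesssim_\beta\langle\xi\rangle^{-[d/2]}$ for every $\beta$. Second, since $d\ge3$ the free resolvent $R_0^\pm(\lambda^2)$ is smooth on $(0,\infty)$ and continuous up to $\lambda=0$ on suitable weighted $L^2$ spaces, so $F^\pm(\lambda^2)$ is continuous on $[0,\infty)$, smooth on $(0,\infty)$ with symbol-type bounds, and at $\lambda=0$ carries only a mild correction ($\lambda^{d-2}$ or $\lambda^{d-2}\log\lambda$); combined with \eqref{eq-spectral condition}, $(1+\alpha F^\pm(|\xi|^2))^{-1}$ is a bounded symbol of the same class. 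Third, $\bigl[R_0^\pm(\lambda^2)\varphi\bigr](x)$ is, for $\lambda|x|\gtrsim1$, an outgoing (resp. incoming) spherical wave $c\,\lambda^{(d-3)/2}|x|^{-(d-1)/2}e^{\pm i\lambda|x|}\hat\varphi(\pm\lambda\hat x)$ up to terms decaying faster in $|x|$.

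\textbf{The kernel estimate and the main obstacle.} Passing to polar coordinates $\xi=\lambda\omega$ and applying stationary phase on the sphere in $\omega$ (stationary points $\pm\hat y$), $\Omega_\pm(x,y)$ becomes a superposition of radial oscillatory integrals $\int_0^\infty b(\lambda,\hat x,\hat y)\,e^{i\lambda\theta}\,d\lambda$ with phases $\theta\in\{\pm|x|\pm|y|\}$ and amplitude bounded by $(|x||y|)^{-(d-1)/2}\lambda^{d-2}\langle\lambda\rangle^{-2[d/2]}$; note that $d-2-2[d/2]=-1$ when $d$ is odd, so there is no decay to spare. When $\theta=\pm(|x|+|y|)$, or $\theta=\pm(|x|-|y|)$ with $\bigl|\,|x|-|y|\,\bigr|\gtrsim1$, the phase is non-degenerate, and repeated integration by parts in $\lambda$ — legitimate since $\hat\varphi$ and $(1+\alpha F^\pm)^{-1}$ may be differentiated as often as needed and $b$ vanishes at $\lambda=0$ — produces enough decay in $\theta$ for the Schur integrals (in the region where one of $|x|,|y|$ is bounded, extra decay in the other variable is automatic and the estimate is easier). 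The genuine difficulty is the \emph{resonant set} $\bigl\{\,\bigl|\,|x|-|y|\,\bigr|\lesssim1,\ |x|,|y|\gtrsim1\,\bigr\}$, where integration by parts in $\lambda$ is useless; there one uses the conditional convergence of $\int_0^\infty b(\lambda)e^{i\lambda u}\,d\lambda$ for an amplitude decaying only like $\langle\lambda\rangle^{-1}$ to obtain the borderline bound
\[
\bigl|\Omega_\pm(x,y)\bigr|\lesssim(|x||y|)^{-(d-1)/2}\,\log\Bigl(2+\bigl|\,|x|-|y|\,\bigr|^{-1}\Bigr),
\]
and then notes that on the annulus $\bigl|\,|y|-|x|\,\bigr|\le1$ the weight $(|x||y|)^{-(d-1)/2}$ exactly cancels the Jacobian $\rho^{d-1}\,d\rho$, so the remaining integral (over angle $\times$ radial width) is finite uniformly in $x$; the bound $\sup_y\int_x|\Omega_\pm(x,y)|\,dx<\infty$ follows by the same argument applied to the adjoint kernel, which has the identical structure. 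The low-energy range $\lambda\lesssim1$ and the region near the origin require only minor, routine variants of these arguments. Combining all cases establishes the two Schur bounds, so $\Omega_\pm$ is bounded on $L^1$ and $L^\infty$, and therefore $W_\pm(H_\alpha,H_0)=I+\Omega_\pm$ is bounded on $L^p(\R^d)$ for every $1\le p\le\infty$.
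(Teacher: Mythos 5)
Your overall skeleton (Aronszajn--Krein/stationary representation, reduction to Schur bounds for the kernel of $W_\pm-I$, splitting into resonant and non-resonant sets in $\bigl||x|\pm|y|\bigr|$) matches the paper's strategy, but there is a genuine gap exactly where the theorem is hardest: the high-energy regime. After your polar-coordinate/stationary-phase reduction, the radial amplitude contains the factors $\hat\varphi(\pm\lambda\hat x)\,\hat\varphi(\pm\lambda\hat y)\,(1+\alpha F^\pm(\lambda^2))^{-1}$, and none of these behaves like a classical symbol in $\lambda$: every $\lambda$-derivative of $\hat\varphi(\lambda\hat x)$ is again a Fourier transform of a function with only $C^{[d/2]}$ smoothness and fixed polynomial decay, hence is $O(\langle\lambda\rangle^{-[d/2]})$ with \emph{no} extra gain, and the derivatives of $(1+\alpha F^\pm(\lambda^2))^{-1}$ are only $O(\lambda^{-1})$ uniformly in the order (cf.\ \eqref{eq:est for G high-1}). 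Consequently your claim that ``repeated integration by parts in $\lambda$ \dots produces enough decay in $\theta$'' does not hold: for $d$ odd the total decay of the amplitude is exactly $\langle\lambda\rangle^{-1}$, the worst terms after $M$ integrations by parts still decay only like $\langle\lambda\rangle^{-1}$, so the integrated-by-parts integral is not absolutely convergent and you do not obtain $\langle\theta\rangle^{-M}$ with $M>\frac{d+1}{2}$, which is what the Schur test requires for $d\ge4$. The same defect invalidates the resonant-set step: the logarithmic bound from ``conditional convergence'' needs $|b'(\lambda)|\lesssim\langle\lambda\rangle^{-2}$-type gains that are unavailable. The failure is not merely technical: if $\varphi$ is a bump centered at $x_0\neq0$, then $\hat\varphi(\lambda\hat x)=e^{-i\lambda\hat x\cdot x_0}\hat\psi(\lambda\hat x)$ carries its own $\lambda$-oscillation, and your reduced radial integral genuinely decays only like one power of the phase, far short of what you assert.

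The missing ingredient is precisely the role of the smoothness hypothesis \eqref{eq-smoothness condition}, which in your write-up is only used to get pointwise decay of $\hat\varphi$. The paper instead keeps the $x_1,x_2$-integrals against $\varphi(x_1)\varphi(x_2)$ with phases $\rho_\pm=|x-x_1|\pm|x_2-y|$ and first integrates by parts in the \emph{spatial} variables via the operators $L_{x_1},L_{x_2}$ of \eqref{eq-differential operators}: each application spends one derivative of $\varphi$ (this is exactly why $\varphi\in C^{[d/2]}$ is assumed) and gains a factor $\lambda^{-1}$, reducing the $\lambda$-power $\lambda^{d-2}$ to order $\le-1$ while the remaining amplitude ($w_s^\pm$, $J^\pm$, $\Phi^\pm$) does satisfy genuine symbol bounds \eqref{eq-w estiamte-0702}; only then is integration by parts in $\lambda$ performed, yielding $\langle\rho_\pm\rangle^{-[\frac{d+3}{2}]}$ decay and admissibility of the kernel (Proposition \ref{pro-high energy-d>3}). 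Without this step (or an equivalent mechanism converting spatial regularity of $\varphi$ into $\lambda$-decay), your high-energy estimate does not close; the low-energy part, which you dismiss as routine, also requires the expansions of Lemmas \ref{lemma-F low energy-d>3}--\ref{lm3.2} and the multiplier/oscillatory-integral arguments of Proposition \ref{pro-low energy-d>3}, but the decisive gap is the high-energy one described above.
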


	\begin{theorem}\label{thm-main result-rank one-d=1,2}
		For dimensions $d=1,2$, under \textbf{Condition $H_\alpha$}, the following holds:
		
		\begin{enumerate}
			\item[(i)] 
			When $\int_{\mathbb{R}^d}\varphi(x)\d x=0,$ the wave operators $W_\pm(H_\alpha,H_0)$ are bounded on $L^p(\mathbb{R}^d)$ for the full range $1\le p\le \infty.$
			
			\item[(ii)]  When $\int_{\mathbb{R}^d}\varphi(x)\d x\neq0,$ the wave operators $W_\pm(H_\alpha,H_0)$ exhibit:
			\begin{itemize}
				\item Boundedness on $L^p(\mathbb{R}^d)$ for all $1< p<\infty$;
				
				\item Weak type (1, 1) boundedness from $L^1(\mathbb{R}^d)$ to $L^{1,\infty}(\mathbb{R}^d)$; 
				
				\item Failure of boundedness on $L^1(\mathbb{R}^d)$ and $L^{\infty}(\mathbb{R}^d)$ in dimension $d=1$ and on $L^1(\mathbb{R}^d)$  when $d=2$.
				
			\end{itemize}
		\end{enumerate}
	\end{theorem}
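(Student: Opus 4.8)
The plan is to start from the stationary representation of the wave operators, specialised to the rank-one perturbation $V=\alpha\langle\cdot,\varphi\rangle\varphi$. Writing $W_\pm=I+\Omega_\pm$ with
\[
\Omega_\pm f=c\int_0^\infty R_V^{\pm}(\lambda^2)\,V\,\bigl(R_0^+(\lambda^2)-R_0^-(\lambda^2)\bigr)f\,\lambda\,\d\lambda ,
\]
the resolvent identity together with the scalar reduction
\[
R_V^{\pm}(\lambda^2)\varphi=\bigl(1+\alpha F^{\pm}(\lambda^2)\bigr)^{-1}R_0^{\pm}(\lambda^2)\varphi ,
\]
legitimate precisely because of the spectral assumption \eqref{eq-spectral condition}, collapses $\Omega_\pm$ to a one-parameter superposition of rank-one operators,
\[
\Omega_\pm f(x)=c'\int_0^\infty\frac{\lambda}{1+\alpha F^{\pm}(\lambda^2)}\,\bigl[R_0^{\pm}(\lambda^2)\varphi\bigr](x)\,\bigl\langle f,\ \mathrm{Im}\,R_0^{+}(\lambda^2)\varphi\bigr\rangle\,\d\lambda
\]
(with the precise constant and choice of boundary values as specified below). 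Everything then reduces to pointwise control of the kernel $\Omega_\pm(x,y)$: $W_\pm$ is bounded on \emph{every} $L^p$, $1\le p\le\infty$, as soon as $\Omega_\pm(x,y)$ passes the Schur test $\sup_x\int|\Omega_\pm(x,y)|\,\d y+\sup_y\int|\Omega_\pm(x,y)|\,\d x<\infty$, while a failure at $p=1$ (resp.\ $p=\infty$) will be read off from non-integrability of $\Omega_\pm(\cdot,y)$ in $x$ (resp.\ of $\Omega_\pm(x,\cdot)$ in $y$).

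I would then split the $\lambda$-integral by a smooth cutoff into a high-energy part ($\lambda\ge1$) and a low-energy part ($\lambda\le1$). On $\lambda\ge1$ the amplitude $\lambda\,(1+\alpha F^{\pm}(\lambda^2))^{-1}$ is smooth with controlled $\lambda$-derivatives (using $|1+\alpha F^{\pm}|\ge c_0$ and the decay of $F^{\pm}$ and its derivatives, which follow from \eqref{eq-decay condition} and \eqref{eq-smoothness condition}), while $[R_0^{\pm}(\lambda^2)\varphi](x)$ and $\mathrm{Im}\,R_0^{+}(\lambda^2)\varphi(y)$ oscillate at frequencies $|x|$ and $|y|$ through the explicit (odd $d$) or Hankel (even $d$) expansion of the free resolvent. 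Repeated integration by parts in $\lambda$, affordable because $\varphi\in C^{\beta_0}$ with $\beta_0=[d/2]$, turns this piece into a kernel decaying rapidly in $\bigl|\,|x|-|y|\,\bigr|$ with enough decay in $\max(|x|,|y|)$ to be Schur-admissible; hence the high-energy part of $W_\pm$ is bounded on all $L^p$, $1\le p\le\infty$, whatever the value of $\int\varphi$. The dichotomy is thus a purely low-energy effect.

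For $\lambda\le1$ one expands $R_0^{\pm}(\lambda^2)\varphi$, $\mathrm{Im}\,R_0^{+}(\lambda^2)\varphi$ and $F^{\pm}(\lambda^2)$ in powers of $\lambda$ (and of $\log\lambda$ when $d=2$); the key observation is that every singular prefactor occurring there---$\lambda^{-1}$ for $d=1$, $\log\lambda$ for $d=2$---comes multiplied by $\widehat\varphi(0)=\int_{\mathbb{R}^d}\varphi$. In case (i), $\int\varphi=0$, these singular terms disappear, the amplitude and $[R_0^{\pm}(\lambda^2)\varphi](x)$ are bounded with bounded $\lambda$-derivatives on $[0,1]$, one more integration by parts gives a Schur-admissible kernel, and $W_\pm$ is bounded on all $L^p$, $1\le p\le\infty$. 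In case (ii), $\int\varphi\ne0$, the leading singular term survives; subtracting it leaves a remainder treated as in case (i), while the leading term reduces matters to one explicit model operator. In $d=1$ this model is, modulo Schur-admissible terms, essentially a sharp frequency cutoff to a half-line (hence a Hilbert-transform-type operator) acting in the variable $|x|$: bounded on $L^p$ for $1<p<\infty$, of weak type $(1,1)$, unbounded on $L^1$; moreover, since in $d=1$ both $R_0^{+}(\lambda^2)\varphi$ and $\mathrm{Im}\,R_0^{+}(\lambda^2)\varphi$ carry the same $\lambda^{-1}$ blow-up, the adjoint $\Omega_\pm^*$ has the same structure and also fails on $L^1$, i.e.\ $\Omega_\pm$ fails on $L^\infty$. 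In $d=2$ the low-energy blow-up of $\mathrm{Im}\,R_0^{+}(\lambda^2)\varphi$ is only $O(1)$ (against $O(\lambda^{-1})$ in $d=1$) and $(1+\alpha F^{\pm}(\lambda^2))^{-1}\sim c/\log\lambda$, so the model is a borderline Riesz-type operator with logarithmic corrections: still bounded on $L^p$ for $1<p<\infty$ and of weak type $(1,1)$ (by Calder\'on--Zygmund theory, or a direct bounded-variation estimate on the $\lambda$-profile), but with $\Omega_\pm(\cdot,y)\notin L^1(\mathbb{R}^2)$ for $y$ in a set of positive measure---so $L^1$-boundedness fails, the broken symmetry meaning this argument yields only the $L^1$ endpoint failure. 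The failures are made rigorous by the approximate-identity test: $W_\pm f_n=f_n+\Omega_\pm f_n$ with $\|f_n\|_{L^1}=1$ and $f_n\to\delta_{y_0}$ forces $\|W_\pm f_n\|_{L^1}\to\infty$ once $\Omega_\pm(\cdot,y_0)\notin L^1$, the Schur-admissible remainder being unable to compensate.

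The main obstacle is case (ii): isolating the leading low-energy term cleanly---especially in $d=2$, where one must unwind an expansion mixing powers of $\log\lambda$ and control $(1+\alpha F^{\pm}(\lambda^2))^{-1}$ uniformly down to $\lambda=0$---then bounding all remainders uniformly in $(x,y)$, and above all proving the \emph{sharp} endpoint behaviour: simultaneously establishing the positive weak-$(1,1)$ bound and exhibiting the explicit functions witnessing failure on $L^1$ (and on $L^\infty$ when $d=1$), while verifying that the model term and the remainder cannot interfere to restore integrability. The high-energy analysis and case (i) are, by comparison, routine once the explicit kernel formula and the low-energy resolvent expansions are in hand.
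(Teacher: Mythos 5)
Your proposal follows the same global architecture as the paper (stationary representation, Aronszajn--Krein scalar reduction, low/high energy splitting, dichotomy driven by $\widehat\varphi(0)$, and in $d=1$ the identification of the leading low-energy term with a truncated Hilbert-transform kernel in the variables $|x|,|y|$), and your $d=1$ analysis and case (i) are essentially the paper's route. However, there is a genuine gap in dimension two, case (ii). There the leading low-energy operator is, modulo admissible remainders, the composition of an operator $T_\varphi$ with the radial Fourier multiplier $G_+^{\alpha}(|\xi|)\chi(|\xi|)\sim c/\log(1/|\xi|)$, and neither of the tools you invoke delivers the endpoint claims. First, the model is \emph{not} a Calder\'on--Zygmund operator: the paper computes the kernel of $T_\varphi$ explicitly as $PV\,(|x|^2-|y|^2)^{-1}+\pi\delta(|x|^2-|y|^2)$, whose singularity lies along the sphere $\{|y|=|x|\}$ rather than the diagonal, so standard CZ theory gives neither $L^p$ bounds nor weak $(1,1)$; the paper obtains weak $(1,1)$ by a polar-coordinate reduction of the radial output to the one-dimensional Hilbert transform plus a spherical averaging operator (Lemma \ref{lem-T-phi-0504}). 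Your alternative, a ``bounded-variation estimate on the $\lambda$-profile,'' also fails in $\R^2$: BV radial symbols are superpositions of ball multipliers, which are unbounded on $L^p(\R^2)$ for $p\neq2$. Second, you still need the symbol $\sim 1/\log(1/|\xi|)$ to be an $L^p$ multiplier for \emph{all} $1\le p\le\infty$ in order to pass the weak-$(1,1)$ bound through the composition; this symbol fails the Mikhlin--H\"ormander and Bernstein-type criteria (it has no power decay), and the paper proves a separate multiplier lemma (Lemma \ref{lem-Fou-multiplier}) exploiting the additional logarithmic decay of derivatives to show its inverse Fourier transform is integrable. Without these two ingredients the positive part of case (ii) in $d=2$ is not established.

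The negative $L^1$ result in $d=2$ is also not secured by your argument: it rests on the claim that $\Omega_\pm(\cdot,y)\notin L^1(\R^2)$ for a positive-measure set of $y$, but because the leading term carries the implicit $1/\log$ multiplier there is no explicit kernel against which to verify this, and your approximate-identity test needs exactly such pointwise lower bounds. The paper avoids kernel lower bounds altogether: assuming $L^1$-boundedness, it pairs $\mathcal W_-^l f$ with the constant function for a band-limited $f$ and shows that the resulting quantity equals $|\widehat\varphi(0)|^2\int G_+^{\alpha}(|\eta|)\chi(|\eta|)|\eta|^{-2}\,\d\eta$, which diverges precisely because $G_+^{\alpha}(\lambda)\sim c/\log(1/\lambda)$, yielding the contradiction. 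You would need either this Fourier-side argument or a genuine lower bound on the composed kernel to close the $d=2$ endpoint failure.
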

	
	We make the following remarks on Theorem \ref{thm-main result-rank one-d>3} and \ref{thm-main result-rank one-d=1,2}.
	\begin{itemize}
		\item[($\textbf{a}_1$)]  Theorem \ref{thm-main result-rank one-d>3} improves the result of Nier and the third author \cite[Theorem 2.1]{NS} in the following sense: under similar decay and smoothness assumptions on  $\varphi$, the  boundedness of $W_\pm(H_\alpha,H_0)$ remains valid at the endpoint cases $p=1$ and $p=\infty$ when $d\ge 3$. 
		To the authors' best knowledge, Theorem  \ref{thm-main result-rank one-d=1,2} appears to be the first result establishing the $L^p$ boundedness of the wave operator $W_\pm(H_\alpha,H_0)$ in dimensions $d=1,2$.
		
		\item[($\textbf{a}_2$)] For spatial dimensions $d=1, 2$, we compare our results with those for the Schr\"{o}dinger operator $H=-\Delta+V$, Weder \cite[Theorem 1.1]{Weder99} first proved boundedness in $L^p(\mathbb{R})$ for $1<p<\infty$ and observed that the case $p=1$ holds if  the Jost solutions of $-f''+Vf=\lambda^2f$ satisfy  $\lim_{x\rightarrow-\infty}f_1(x,0)=1$ and $V$ is exceptional (the  Wronskian $W(0)=0$), see also in \cite{AF06,Gal-Yaj-00}. The  2D case is more complicated:  Erdog\u{a}n, Goldberg and Green \cite{EGG18} showed that the wave operators are bounded on $L^p(\mathbb{R}^2)$ for $1<p<\infty$ if there is an $s-$wave resonance or an eigenvalue only at zero; Yajima \cite{Yajima22-2d} proved that if there is a $p-$wave resonance at zero, then the wave operator is bounded on $L^p(\mathbb{R}^2)$ for $1<p\le 2$ and unbounded for $2<p<\infty$, while the problem for $p=1$ is left open.  
		In contrast, our work presents a dichotomy for boundedness of $W_\pm(H_\alpha,H_0)$  at  $p=1$,  depending on whether  $\int_{\mathbb{R}^d}\varphi(x)\d x$ ($d=1,2$) vanishes. Notably, when $\int_{\mathbb{R}^d}\varphi(x)\d x\ne 0$, $d=1,2$, the wave operator  $W_\pm(H_\alpha,H_0)$ is unbounded on  $L^1(\mathbb{R}^d)$. It is noteworthy that by \cite[Theorem 1.1]{CHZ23}, the dispersive estimate
		\begin{equation}\label{eq-disp}
			\|e^{-itH_\alpha}P_{ac}(H_\alpha)\|_{L^1-L^{\infty}}\leq C t^{-\frac{d}{2}},\quad\,\,\,\,t>0   
		\end{equation}
		remains valid regardless of whether $\int_{\mathbb{R}^d}\varphi(x)\d x$  vanishes or not.  This also demonstrates that the boundedness of the wave operators   $W_\pm(H_\alpha,H_0)$  for $p=1$ is strictly stronger than the dispersive estimate \eqref{eq-disp}.
			
			\item[($\textbf{a}_3$)] Our strategy to prove  Theorem \ref{thm-main result-rank one-d>3}-\ref{thm-main result-rank one-d=1,2} is  quite different from the method employed in \cite{NS}. The proof in \cite{NS} depends critically on Yajima’s earlier results \cite{Yajima95,Yajima95-even} and an application of the  H\"{o}rmander multiplier theorem to the function $\frac{1}{1+\alpha F^{\pm}(|\xi|^2)}$. In contrast, we exploit the resolvent and analyze the pointwise behavior of the kernel of $W_\pm(H_\alpha,H_0)$ through a detailed analysis of oscillatory integrals.

		\end{itemize}

		Now  we consider the finite rank perturbations
		\begin{equation}\label{eq-finite rank perturbation}
			H=H_0+\sum_{i=1}^NP_j,\ \ \ H_0=-\Delta,\ P_j=\langle \cdot,\varphi_j\rangle\varphi_j.
		\end{equation}
		In this case, we assume that  $\varphi_1,\cdots,\varphi_N$ are $L^2$-normalized and mutually orthogonal, i.e.
		\begin{equation}\label{eq-normalized assumption}
			\langle \varphi_i,\varphi_j\rangle=\delta_{ij},\ \ \ 1\le i,j\le N,
		\end{equation}
		where $\delta_{ij}$ denotes the Kronecker delta. In addition to   decay and smoothness conditions on each $\varphi_j,$ we require an analog of the spectral assumption \eqref{eq-spectral condition}. Instead of the scalar function defined in \eqref{eq-F function-limiting absorption}, we introduce the $N\times N$ matrix
		\begin{equation}\label{eq-F matrix}
			F_{N\times N}(z)=\big(f_{ij}(z)\big)_{N\times N}, \ \ \ f_{ij}(z)=\langle(-\Delta-z)^{-1}\varphi_i,\varphi_j\rangle,\ z\in\mathbf{C}\setminus[0,\infty).
		\end{equation}
		For  $\varphi_i\in L^2_\sigma$ with $\sigma>\frac{1}{2}$ ($1\le i\le N$),  the boundary values
		\begin{equation}\label{eq-F matrix-limiting absorption}
			F^\pm_{N\times N}(\lambda^2):=F_{N\times N}(\lambda^2\pm i0),\, \, \,\lambda>0
		\end{equation}
		are well-defined.
		Motivated by \eqref{eq-spectral condition}, we impose the following spectral assumption: There is some absolute constant $c_0>0$ such that
		\begin{equation}\label{eq-spectral condition-finite rank}
			|\det(\mathbb{I}_{N}+F^{\pm}_{N\times N}(\lambda^2))|\ge c_0>0
		\end{equation}
		holds for any $\lambda>0$. Indeed, this assumption implies $\sigma(H)=\sigma_{ac}(H)=[0,+\infty),$ see Lemma B.1 in \cite{CHZ23}.
		
		\begin{theorem}\label{thm-main result-finite rank}
			Assume that each $\varphi_i\ (1\le i\le N)$ satisfies (i) of \textbf{Condition $H_\alpha$} and that the spectral assumption \eqref{eq-spectral condition-finite rank} holds.

			(i) For $d\ge3,$ the wave operators $W_\pm(H,H_0)$ are bounded on $L^p(\R^d)$ for all  $1\le p\le\infty$.
			
			(ii) For  $d=1,2$, if $\int_{\mathbb{R}^d}\varphi_i(x)\d x=0$ holds for every $1\le i\le N$, then the wave operators $W_\pm(H,H_0)$ remain bounded on $L^p(\R^d)$ for all  $1\le p\le\infty$.
			
			(iii) For  $d=1,2$, if there exists at least one $i$ ($1\le i\le N$)   such that $\int_{\mathbb{R}^d}\varphi_i(x)\d x\ne0$, then the wave operators $W_\pm(H,H_0)$ are bounded on $L^p(\R^d)$  for $1< p<\infty$ and from $L^1$ to $L^{1,\infty}$, but fails to be bounded  at the endpoints $p=1, \infty$ when $d=1$ and $p=1$ when $d=2$.
		\end{theorem}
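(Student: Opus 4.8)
\emph{Proof strategy.} The plan is to reduce the finite rank problem to the rank one analysis underlying Theorems~\ref{thm-main result-rank one-d>3} and~\ref{thm-main result-rank one-d=1,2}, replacing the scalar Aronszajn--Krein identity by its matrix form. Write the perturbation as $V=\Phi^*\Phi$, where $\Phi\colon L^2(\R^d)\to\C^N$, $(\Phi f)_i=\langle f,\varphi_i\rangle$, and $\Phi^*c=\sum_{i=1}^N c_i\varphi_i$, so that $\Phi R_0(z)\Phi^*=F_{N\times N}(z)$ with $R_0(z)=(-\Delta-z)^{-1}$. Solving the resolvent equation $R_V=R_0-R_0VR_V$ using the finite rank of $V$ gives
\begin{equation*}
R_V(z)=R_0(z)-R_0(z)\,\Phi^*M(z)\,\Phi R_0(z),\qquad M(z):=\bigl(\mathbb{I}_N+F_{N\times N}(z)\bigr)^{-1},
\end{equation*}
and a one-line computation collapses this to $R_V(z)\,V=R_0(z)\,\Phi^*M(z)\,\Phi$. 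Substituting into the standard stationary representation of $W_\pm(H,H_0)-I$ (the same one used in the rank one proof) and expanding $\Phi^*M^\pm(\lambda^2)\Phi=\sum_{i,j=1}^N m^\pm_{ij}(\lambda^2)\,\langle\cdot,\varphi_j\rangle\varphi_i$, where $\bigl(m^\pm_{ij}(\lambda^2)\bigr)=\bigl(\mathbb{I}_N+F^\pm_{N\times N}(\lambda^2)\bigr)^{-1}$, one sees that the integral kernel of $W_\pm(H,H_0)-I$ is a finite sum over $1\le i,j\le N$ of kernels of exactly the ``rank one type'' already analyzed, in which the scalar multiplier $(1+\alpha F^\pm(\lambda^2))^{-1}$ is replaced by the matrix entry $m^\pm_{ij}(\lambda^2)$ and the generating profiles are $\varphi_i$ on the $x$-side and $\varphi_j$ on the $y$-side.

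Everything then reduces to checking that the entries $m^\pm_{ij}(\lambda)$ satisfy the same symbol-type bounds in $\lambda$ — together with $\lambda$-derivatives and the same low/high energy asymptotics — as the scalar function $(1+\alpha F^\pm(\lambda^2))^{-1}$. This follows from Cramer's rule, $M^\pm=\mathrm{adj}\bigl(\mathbb{I}_N+F^\pm_{N\times N}\bigr)\big/\det\bigl(\mathbb{I}_N+F^\pm_{N\times N}\bigr)$: the spectral assumption~\eqref{eq-spectral condition-finite rank} supplies the uniform lower bound $|\det\bigl(\mathbb{I}_N+F^\pm_{N\times N}(\lambda^2)\bigr)|\ge c_0$ on the denominator, while each entry $f^\pm_{ij}(\lambda^2)$ and its $\lambda$-derivatives obey precisely the bounds already established for the scalar $F^\pm$, by the limiting absorption principle under the decay and smoothness conditions~\eqref{eq-decay condition}--\eqref{eq-smoothness condition}; hence so do the cofactors (polynomials in the $f^\pm_{ij}$) and their quotient. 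Since the kernel is a finite sum of rank-one-type pieces, parts~(i) and~(ii) follow by repeating the oscillatory integral estimates of the rank one proof term by term. For part~(ii) one uses in addition that, when $\int_{\R^d}\varphi_i\,\d x=0$ for every $i$, the matrix $F^\pm_{N\times N}(\lambda^2)$ has no singular part as $\lambda\to 0$ — the coefficients of $\lambda^{-1}$ (in $d=1$), resp.\ of $\log\lambda$ (in $d=2$), are multiples of $\bigl(\int\varphi_i\bigr)\bigl(\int\varphi_j\bigr)$ — so $M^\pm$ is a smooth symbol near $\lambda=0$, matching the zero-mean scalar case that yields the full range $1\le p\le\infty$ in $d=1,2$.

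For part~(iii) we first reduce via a change of basis. Because $\{\varphi_i\}$ is orthonormal, $V=\sum_{i=1}^N\langle\cdot,\varphi_i\rangle\varphi_i$ is the orthogonal projection onto $E:=\mathrm{span}\{\varphi_1,\dots,\varphi_N\}$, so $H$ and hence $W_\pm(H,H_0)$ depend only on $E$; we may thus replace $\{\varphi_i\}$ by any orthonormal basis of $E$. Choosing one whose first vector is parallel to the (nonzero) vector $\bigl(\int_{\R^d}\varphi_1\,\d x,\dots,\int_{\R^d}\varphi_N\,\d x\bigr)$, we obtain a basis $\psi_1,\dots,\psi_N$ with $\int_{\R^d}\psi_1\,\d x\ne 0$ and $\int_{\R^d}\psi_k\,\d x=0$ for $2\le k\le N$; the decay and smoothness conditions pass to the $\psi_k$, and~\eqref{eq-spectral condition-finite rank} is unchanged since $F^\pm_{N\times N}$ is merely conjugated by an orthogonal matrix. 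With this normalization the singular part of $F^\pm_{N\times N}(\lambda^2)$ as $\lambda\to 0$ sits entirely in the $(1,1)$ entry and equals $\bigl(\int_{\R^d}\psi_1\,\d x\bigr)^2$ times the free-resolvent singularity ($c_d\lambda^{-1}$ in $d=1$, $c_d\log\lambda$ in $d=2$). Inspecting the cofactors then shows that $M^\pm(\lambda)$ has a block structure near $\lambda=0$: the $\{2,\dots,N\}\times\{2,\dots,N\}$ block is a smooth symbol, while the first row and first column vanish as $\lambda\to 0$ (to first order in $d=1$, like $1/\log(1/\lambda)$ in $d=2$), reproducing exactly the rank one situation of Theorem~\ref{thm-main result-rank one-d=1,2}(ii) in the $\psi_1$-channel. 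Consequently $W_\pm(H,H_0)-I$ splits as a ``regular'' operator, bounded on $L^p$ for all $1\le p\le\infty$ by the argument of part~(ii), plus the rank one operator attached to $\psi_1$, which is bounded for $1<p<\infty$, of weak type $(1,1)$, and unbounded at $p=1$ (and at $p=\infty$ when $d=1$); the failure is exhibited by the same explicit non-integrable leading kernel term as in the proof of Theorem~\ref{thm-main result-rank one-d=1,2}, with $\varphi$ replaced by $\psi_1$.

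The main obstacle is the step in the second paragraph: transferring the delicate scalar estimates for $(1+\alpha F^\pm(\lambda^2))^{-1}$ — in particular the low-energy expansions in $d=1,2$ responsible for the $p=1$ dichotomy — to the matrix inverse $M^\pm(\lambda)$ through the cofactor/determinant formula, with bookkeeping clean enough that the alternative ``all means vanish'' versus ``some mean does not'' is read off transparently from the rank of the singular part of $F^\pm_{N\times N}$ at $\lambda=0$. A secondary, routine point is to justify the finite rank resolvent identity and the stationary representation under hypotheses~(i) of \textbf{Condition $H_\alpha$} together with~\eqref{eq-spectral condition-finite rank} alone, which poses no difficulty since $N<\infty$.
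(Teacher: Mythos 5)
Your overall strategy is the same as the paper's: matrix Aronszajn--Krein formula, reduction to a finite sum of rank-one--type kernels with the scalar multiplier replaced by the entries $m^{\pm}_{ij}(\lambda)$ of $\bigl(\mathbb{I}_N+F^{\pm}_{N\times N}\bigr)^{-1}$, and, for part (iii), the change of orthonormal basis that puts the whole singularity into the $\psi_1$-channel. Parts (i) and (ii) and the high-energy regime are essentially fine: your Cramer's-rule bookkeeping is a legitimate substitute for the paper's operator-valued Neumann-series expansions (Lemmas \ref{lemma-g_ij-high energy}--\ref{lemma:g_ij-low energy-2}), although to get the block structure you use in part (iii) you also need the nondegeneracy of the lower-right block, i.e.\ the invertibility of $P_2+P_2G_0P_2$ on the zero-mean subspace; this does not follow from \eqref{eq-spectral condition-finite rank} alone (if that block were degenerate, $(M^\pm)_{11}$ need not vanish at $\lambda=0$), and must be checked, as the paper does via positivity of $P_2(-\Delta)^{-1}P_2$.

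The genuine gap is in part (iii), in your treatment of the mixed channels $i=1,\,j\neq 1$ and $i\neq 1,\,j=1$. These are neither ``exactly the rank-one situation of Theorem \ref{thm-main result-rank one-d=1,2}(ii) in the $\psi_1$-channel'' (only one of the two profiles is $\psi_1$) nor covered ``by the argument of part (ii)'': that argument gains a full power of $\lambda$ (resp.\ writes both Hankel/Bessel factors as integrals of derivatives in $d=2$) from the vanishing mean of \emph{both} profiles, whereas in the mixed channels only one profile has zero mean and the multiplier contributes only $m^{\pm}_{1j}\sim\lambda$ in $d=1$ but merely $\sim(\log\lambda)^{-1}$ in $d=2$. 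In $d=1$ this still works after a small adaptation (one factor of $\lambda$ from the multiplier, one from the single cancellation), but in $d=2$ a logarithmic gain cannot replace the missing cancellation, so a genuinely new kernel estimate is required; this is precisely the content of the paper's Lemma \ref{lem:for i=1 and jneq 1}, which proves $L^1$ (and $1<p<\infty$) boundedness of these pieces by separate oscillatory-integral bounds, and even then the $L^\infty$-boundedness of $\mathcal{W}^{l}_{-,i,1}$ ($i\neq 1$) in $d=2$ is left open (Remark 5.6). Your claim that the complementary part is ``bounded on $L^p$ for all $1\le p\le\infty$'' is therefore both unjustified and stronger than what is known. The $L^1$-boundedness of the mixed terms is not a cosmetic point: it is exactly what lets you conclude that the unboundedness of the single $(1,1)$-piece at $p=1$ (and at $p=\infty$ when $d=1$) forces unboundedness of the full wave operator, so without an argument for the mixed channels the endpoint dichotomy in part (iii) is not proved.
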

		
		Some remarks on Theorem \ref{thm-main result-finite rank}  are as follows:
		\begin{itemize}
			\item[($\textbf{b}_1$)] 
			As far as we are aware, this is the first result concerning $L^p$ boundedness of the wave operators for \eqref{eq-finite rank perturbation} in any dimensions. Previous related work by  Nier and the third author \cite[Theorem 1.1]{NS} addressed   $L^p-L^{p'}$ estimates for the propagator $e^{-itH}$ in the range $1<p<\infty$, 
			where $\varphi_i(x) = \varphi(x - \tau_i)$ and the spreading condition $|\tau_i - \tau_j| \geq \tau_0$ is assumed. 
			The endpoint case $p=1$ was   recently resolved in \cite{CHZ23} for more general $\varphi_i$.
			
			\item[($\textbf{b}_2$)] We shall apply the Aronszjn-Krein type formula for finite rank perturbations (see \cite[Proposition 4.1]{CHZ23}). This allows us to reduce the problem to the rank one case by studying the inverse of the matrix
			$\mathbb{I}_{N}+F_{N\times N}(\lambda^2)$, or equivalently, the inverse operator $(I + P R_0^\pm(\lambda^2) P)^{-1}$. Notably, the analysis becomes more intricate for dimensions $d = 1$ and $d = 2$ due to the singularities of $R_0^\pm(\lambda^2)$ at $\lambda=0$. Our approach combines two essential components: (i) a  suitable direct sum decomposition of the space $PL^2$ to isolate the singular terms, and (ii) a matrix-based approach to obtain the inverse of $I + P R_0^\pm(\lambda^2) P$. The idea is inspired by the classical work of Jensen and Kato \cite{{J80}}.
			
			\item[($\textbf{b}_3$)]
			As a natural extension, it would be interesting to extend our results to the case $N=\infty$, i.e., 
			\begin{equation*}
				H=H_0+\sum_{i=1}^{\infty}\lambda_jP_j,
			\end{equation*}
			where the coefficients satisfy  $\lambda_j\ge 0$ and $\{\lambda_j\}\in l^p$ for some $p\ge 1$ (trace  class).  Another interesting  direction is the case of random coefficients, which model disordered systems. For example,  the case where  $\lambda_j\in\{0, 1\}$ are  independent Bernoulli random variables.
		\end{itemize}

			
		
		The rest of the paper is organized as follows:
		Section \ref{sec2} introduces the stationary representation of wave operators,  provides expansions of the free resolvent, as well as some integral estimates that will be frequently used in subsequent sections.
		Section \ref{sec3} is devoted to the proof of  Theorem \ref{thm-main result-rank one-d>3}.
		Section \ref{sec4} presents the proof of  Theorem \ref{thm-main result-rank one-d=1,2}. In section \ref{sec5}, we give the proof of Theorem \ref{thm-main result-finite rank}.

		\section{Preliminaries}\label{sec2}
		\setcounter{equation}{0}
		\subsection{Stationary representation.}
		This subsection presents the explicit representation of wave operators for both rank one and finite rank perturbations.
		
		For rank one perturbations \eqref{eq-rank one perturbation}, let $R_\alpha^\pm(\lambda^2)=(H_{\alpha}-\lambda^2\mp i0)^{-1}$. By the  Aronszajn-Krein formula  (see e.g.  \cite{Simon-1995}), we have
		\begin{equation}\label{eq-AK formula-rank one}
			R_\alpha^\pm(\lambda^2)=R_0^\pm(\lambda^2)-\frac{\alpha}{1+\alpha F^\pm(\lambda^2)}R_0^\pm(\lambda^2)\varphi\langle R_0^\pm\cdot,\varphi\rangle,
		\end{equation}
		where $F^\pm(\lambda^2)$ is given by \eqref{eq-F function-limiting absorption}.
		Recall that for Schr\"{o}dinger operators $H=H_0+V$,  the wave operator $W_\pm=W_\pm(H_0+V,H_0)$ has the form 
		\begin{equation}\label{eq-stationary representation-V}
			W_\pm=I-\frac{1}{\pi i}\int_0^\infty\lambda(H_0+V-\lambda^2\pm i0))^{-1}V\big((H_0-\lambda^2-i0))^{-1}-(H_0-\lambda^2+i0))^{-1}\big)\d\lambda.
		\end{equation}
		Substituting $V=\alpha\langle\cdot,\varphi\rangle\varphi$ and using \eqref{eq-AK formula-rank one}, we obtain the stationary representation of the wave operators for rank one perturbations (see  also in \cite{NS}):
		\begin{equation}\label{eq-stationary representation-one rank}
			\begin{split}
				W_\pm(H_\alpha,H_0)&=I-\frac{1}{\pi i}\int_0^\infty\frac{\alpha\lambda}{1+\alpha F^{\mp}(\lambda^2)}R_0^\mp(\lambda^2)\varphi\langle (R_0^+(\lambda^2)-R_0^-(\lambda^2))\cdot,\varphi\rangle \d\lambda \\
				&:=I-\mathcal{W}_{\pm}. 
			\end{split}
		\end{equation}

		For finite rank perturbations, let $P=\sum\limits_{j=1}^NP_j$.  From  the following resolvent identity 
		\begin{equation}\label{eq-resolvent identity-finite rank}
			R^\pm(\lambda^2)P=R_0^\pm(\lambda^2)P-R^\pm(\lambda^2)PR_0^\pm(\lambda^2)P,
		\end{equation}
		we derive the Aronszajn-Krein formula for finite rank perturbations:
		\begin{equation}\label{eq-AK formula-finite rank}
			R^\pm(\lambda^2)P=R_0^\pm(\lambda^2)P\left(I+ PR_0^\pm(\lambda^2)P\right)^{-1}P =\sum_{i,j=1}^Ng_{i,j}^\pm(\lambda^2)R_0^\pm(\lambda^2)\varphi_i \, \langle \cdot,\varphi_j\rangle,
		\end{equation}
		where $G^\pm(\lambda^2)=(g^\pm_{i,j}(\lambda^2))_{N\times N}$ is the inverse matrix of 
		\begin{equation}\label{eq: def of matrix A}
			A^\pm(\lambda)=\mathbb{I}_{N}+F^\pm_{N\times N}(\lambda^2)=(a_{i,j}^\pm(\lambda))_{N\times N}
		\end{equation}
		with 
		\begin{equation*}\label{eq-a_ij}
			a_{ij}^\pm(\lambda)=\delta_{ij}+\langle R^\pm_0(\lambda^2)\varphi_j,\varphi_i\rangle=\delta_{ij}+f^\pm_{j,i}(\lambda)\ \ \ 1\le i,\, j\le N.
		\end{equation*}
		Plugging \eqref{eq-AK formula-finite rank} into \eqref{eq-stationary representation-V} with $V=P$, we obtain the following stationary representation of wave operators for \eqref{eq-finite rank perturbation}:
		\begin{equation}\label{eq-stationary representation-finite rank}
			\begin{split}
				W_\pm(H,H_0) 
				&=I-\sum_{i,j=1}^N\frac{1}{\pi i}\int_0^\infty \lambda g^\mp_{i,j}(\lambda^2) R_0^\mp(\lambda^2)\varphi_{j}\langle(R_0^+(\lambda^2)-R_0^-(\lambda^2))\cdot,\varphi_i\rangle \d\lambda \\
				&:=I-\sum_{i,j=1}^N\mathcal{W}_{\pm,i,j}.
			\end{split}
		\end{equation}
		
		Since $W_-(H_\alpha,H_0)f = \overline{W_+(H_\alpha,H_0)\bar{f}}$ and the identity operator is obviously bounded in $L^p$ for all $1 \leq p \leq \infty$, it suffices to analyze $\mathcal{W}_{-}$ in the rank-one case and $\mathcal{W}_{-,i,j}$ in the finite-rank case. Furthermore, since the stationary representations of $\mathcal{W}_{-,i,j}$ and $\mathcal{W}_{-}$ share the same form, we can reduce the boundedness analysis in the finite-rank case to the rank-one case.
		
		\subsection{The free resolvent $R_0^\pm(\lambda^2)$}
		
		To analyze the  $L^p$-boundedness of the wave operators  $W_-(H_\alpha,H_0)$ and $W_-(H,H_0)$  using their stationary representations \eqref{eq-stationary representation-one rank} and \eqref{eq-stationary representation-finite rank}, we first review some  basic properties of the free resolvent $R_0(z)=(-\Delta-z)^{-1}$.
		
		Recall that $R^{\pm}_0(\lambda^2)$ are integral operators with kernels as
		\begin{equation}\label{eq2.1}
			R^{\pm}_0(\lambda^2; x,y)=\frac{\pm i}{2\lambda}e^{\pm i\lambda|x-y|}, \,\,\,\,\,\, d=1,
		\end{equation}
		and
		\begin{equation}\label{eq2.2}
			R^{\pm}_0(\lambda^2; x,y)=\frac{\pm i}{4}(\frac{\lambda}{2\pi|x-y|})^{ \frac{d-1}{2}}H^{\pm}_{\frac{d-1}{2}}(\lambda|x-y|), \,\,\,\,\,\, d\ge 2,
		\end{equation}
		where $H^{\pm}_{\frac{d-1}{2}}(z)=J_{\frac{d-1}{2}}(z)\pm i Y_{\frac{d-1}{2}}(z)$ are Hankel functions of order $\frac{d-1}{2}$. Note that when $d$ is odd, $H^{\pm}_{\frac{d-1}{2}}(z)$ admit finite-term closed-form expansions, while when $d$ is even,  the  Hankel functions have integer orders, leading to a logarithmic branch point at zero. Unlike the odd-dimensional case, they cannot be expressed in closed form (see \cite[Chapt.10]{Ol}). In the following, we shall give different representations of \eqref{eq2.2}.

		First, for odd dimensions $d\ge 3$, we have
		\begin{equation}\label{eq2.3}
			R^{\pm}_0(\lambda^2; x,y)=c_d\frac{e^{\pm i\lambda|x-y|}}{|x-y|^{d-2}}\,\sum^{\frac{d-3}{2}}_{k=0}{\frac{(d-3-k)!}{k!(\frac{d-3}{2}+k)!}(\mp2i\lambda|x-y|)^k},
		\end{equation}
		where $c_d=1/(4\pi)^{\frac{d-1}{2}}$.  Using Taylor's expansion for $e^{\pm i\lambda|x-y|}$, it can be written as (see e.g. in  \cite{J80})
		\begin{equation}\label{eq-epansion-small-argument-0}
			R^{\pm}(\lambda^2; x,y)=|x-y|^{2-d}\sum_{j=0}^{\frac{d-3}{2}}\mu_j(\lambda|x-y|)^{2j}+\lambda^{2-d}\sum_{j=0}^{\infty}\mu_j(\pm i)^{j+d-2} (\lambda|x-y|)^j.
		\end{equation}
		While for even dimensions $d\ge 2$, $R^{\pm}(\lambda^2; x,y)$ has the following expansion around zero:
		\begin{equation}\label{eq-epansion-small-argument}
			R^{\pm}(\lambda^2; x,y)=|x-y|^{2-d}\sum_{j=0}^{\infty}B_j(\pm \lambda|x-y|),\qquad 0<\lambda|x-y|\ll1,
		\end{equation}
		where  $B_j(z)$ is given by
		\begin{align*}
			B_j(z)=
			\begin{cases}
				\mu_j z^{2j},&\text{if}\,\, 0\le j\le \frac{d}{2}-2,\\[4pt]
				\upsilon_j z^{2j}+\mu_j z^{2j}\log(z),&\text{if}\,\,j\ge \frac{d}{2}-1.\\[4pt]
			\end{cases}
		\end{align*}
		Here,  $\mu_j$ and $\upsilon_j$ are dimension-dependent constants with  $\mu_j\in \R\setminus\{0\} $ and $\upsilon_j\in \mathbb{C}\setminus{\mathbb{R}}$. 
		
		On the other hand, by the asymptotic behavior of Hankel functions for large arguments,  it follows that
		\begin{equation}\label{eq-resolvent-large-argument-1}
			{R}_0^{\pm}(\lambda^{2};x,y)=\frac{\lambda^{\frac{d-3}{2}}}{|x-y|^{\frac{d-1}{2}}}e^{\pm i\lambda|x-y|}\Phi^\pm(\lambda|x-y|),	\qquad \lambda|x-y|>\frac12,
		\end{equation}
		where $\Phi^\pm(z)\in C^\infty\left((\frac12, \infty)\right)$ satisfies
		\begin{equation}\label{eq-resolvent-large-argument-2}
			\left| \frac{\d^k}{\d z^k}\Phi^\pm(z)\right| \lesssim_k \langle z\rangle^{-k}, \quad  k\in \N_0.
		\end{equation}
		Thus for $d\ge 3$,   using \eqref{eq-epansion-small-argument-0}, \eqref{eq-epansion-small-argument} and \eqref{eq-resolvent-large-argument-1}, we can decompose the  kernel $R_0^\pm(\lambda^2;x,y)$ into
		\begin{equation}\label{eq：exp for  reso-0-1}
			R_0^\pm(\lambda^2;x,y)=c_d|x-y|^{2-d}+r_{d}^\pm(\lambda,|x-y|),    
		\end{equation}
		where the remainder terms  $r_d^\pm(\lambda,|x-y|)$ satisfy,  for $ k=0,1,\cdots, \big[\frac{d+3}{2}\big]$ and $0<\lambda<1$, 
		\begin{equation}\label{eq: est r_d>3}
			\Big|\partial_\lambda^k r_d^\pm(\lambda,|x-y|)\Big|\lesssim 
			\begin{cases}
				\lambda^{1-k}|x-y|^{3-d},\quad& \text{$\lambda|x-y|\le  1$},\\  
				\lambda^{\frac{d-3}{2}-k}(|x-y|^{-\frac{d-1}{2}}+|x-y|^2 ) ,\quad& \lambda|x-y|>1.
			\end{cases}
		\end{equation}
		
		Second, for $d\ge 3$, we can also write
		\begin{equation}\label{eq-free kernel-1}
			R^{\pm}_0(\lambda^2; x,y)=\lambda^{\frac{d-3}{2}}e^{\pm i\lambda|x-y|}\frac{w^{\pm}_{ 0}(\lambda|x-y|)}{|x-y|^{\frac{d-1}{2}}}+e^{\pm i\lambda|x-y|}\frac{w^{\pm}_{ 1}(\lambda|x-y|)}{|x-y|^{d-2}},
		\end{equation}
		where the functions  $w^{\pm}_{0}(z),\, w^{\pm}_{1}(z)$ satisfy, for $s=0,1$, 
		\begin{equation}\label{eq-w estiamte-0702}
			\Big|\frac{\d^k w_s^{\pm}(z)}{\d z^k}\Big|\lesssim |z|^{-k},\quad \,\,\,k\in\mathbb{N}_0.
		\end{equation}
		And for $d=2$, we can express $R_0^\pm(\lambda^2;x,y)$ as 
		\begin{equation}\label{eq-free kernel-3}
			R_0^\pm(\lambda^2;x,y)=e^{\pm i\lambda|x-y|}w^\pm(\lambda|x-y|),
		\end{equation}
		where  $w^\pm(z)\in C^\infty\big((0,+\infty)\big)$ satisfies
		\begin{equation}\label{eq-w estiamte}
			\Big|\frac{\d^k w^\pm(z)}{\d z^k}\Big|\lesssim |z|^{-\frac 12-k},\quad\,\,\,k\in\mathbb{N}_0.
		\end{equation}

		Third, we define a smooth cut-off function $\eta(\lambda)$ such that 
		\begin{equation}\label{eq-cut-off function eta}
			\eta(\lambda)=1,\ \quad 0<\lambda<\frac12, \ \ \ \ \mbox{and} \ \ \  \eta(\lambda)=0,\quad\ \lambda>1.
		\end{equation}
		Let $\widetilde{\eta}=1-\eta$.
		Combining \eqref{eq-resolvent-large-argument-1} and \eqref{eq：exp for  reso-0-1} yields that for $d>3$,
		\begin{equation}\label{eq:expansion for free-4}
			\begin{split}
				R^{\pm}_0(\lambda^2; x,y)=&R^{\pm}_0(\lambda^2; x,y)\eta(\lambda|x-y|)+R^{\pm}_0(\lambda^2; x,y)\widetilde{\eta}(\lambda|x-y|) \\
				=&c_d|x-y|^{2-d}-c_d|x-y|^{2-d}\widetilde{\eta}(\lambda|x-y|)+r^{\pm}_d(\lambda,|x-y|)\eta(\lambda|x-y|) \\
				+&\frac{\lambda^{\frac{d-3}{2}}}{|x-y|^{\frac{d-1}{2}}}e^{\pm i\lambda|x-y|}\Phi^\pm(\lambda|x-y|)\widetilde{\eta}(\lambda|x-y|). 	
			\end{split}
		\end{equation}
		Here by \eqref{eq: est r_d>3}, we have the following estimate
		\begin{equation}\label{eq:est for rd with cut}
			\Big|\partial_{\lambda}^{k}\left(r^{\pm}_d(\lambda,|x-y|)\eta(\lambda|x-y|)\right) \Big| \les |x-y|^{3-d}\lambda^{1-k}, \quad  k\in \N_0.\end{equation}
		
		Finally, for all $d\ge 1$, we have 
		\begin{equation}\label{eq:expansion for free-5}
			R^{+}_0(\lambda^2; x,y)-R^{-}_0(\lambda^2; x,y)=\lambda^{\frac{d-3}{2}}e^{ i\lambda|x-y|}\frac{J^{+}(\lambda|x-y|)}{|x-y|^{\frac{d-1}{2}}}- \lambda^{\frac{d-3}{2}}e^{-i\lambda|x-y|}\frac{J^{-}(\lambda|x-y|)}{|x-y|^{\frac{d-1}{2}}},
		\end{equation}
		where $J^{\pm}(\lambda|x-y|)$ satisfies \eqref{eq-resolvent-large-argument-2}.

		\subsection{Admissible kernel and Fourier multipliers}
		
		We note that a large class of terms appearing in  the resolvent expansion have integral kernels satisfying the following admissibility condition: An operator $T$ with kernel $T(x, y)$ is called \emph{admissible} if
		\begin{equation}\label{eq-admissible condition}
			\sup_{x\in\mathbb{R}^d}\int |T(x,y)|\d y+\sup_{y\in\mathbb{R}^d}\int|T(x,y)|\d x<\infty.
		\end{equation}

		It is a standard result that any operator with an admissible kernel is bounded on $L^p(\mathbb{R}^n)$ for all $1 \leq p \leq \infty$. 
		The following lemmas are frequently used in the proof of Theorem \ref{thm-main result-rank one-d>3}--\ref{thm-main result-finite rank}.

		\begin{lemma}[{\cite[Lemma 3.8]{GV06}}]\label{lemma-GV} Let $d\ge1.$ There exists an absolute constant $C>0$ such that
			\begin{equation}\label{eq-estimates-GV-1-3-24}
				\int_{\mathbb{R}^d}|x-y|^{-k}\langle y\rangle^{-l}\d y\le C\langle x\rangle ^{-\min\{k,k+l-d\}},
			\end{equation}
			provided $l\ge 0,0\le k<d$ and $k+l>d.$ As a consequence, we also have
			\begin{equation}\label{eq-estimates-GV-1}
				\int_{\mathbb{R}^d}\int_{\mathbb{R}^d}|x-y|^{-\sigma}\langle x-\tau\rangle^{-\beta_1}\langle y\rangle^{-\beta_2}\d x\d y\le C\langle \tau\rangle^{-\sigma},
			\end{equation}
			provided $\beta_1>d,\beta_2>d$ and $0\le\sigma<d.$
		\end{lemma}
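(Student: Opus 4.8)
The plan is to prove the first estimate \eqref{eq-estimates-GV-1-3-24} by a decomposition of $\mathbb R^d$ into three regions adapted to the base point $x$, and then to deduce the second estimate \eqref{eq-estimates-GV-1} by applying \eqref{eq-estimates-GV-1-3-24} twice, once in $y$ and once in $x$. For \eqref{eq-estimates-GV-1-3-24}, the case $|x|\le 1$ is immediate: over $\{|y|\le 2\}$ the integral is finite because $k<d$ makes $|x-y|^{-k}$ locally integrable, while over $\{|y|>2\}$ one has $|x-y|\sim|y|$, so the integrand is $\lesssim |y|^{-k-l}$, which is integrable at infinity since $k+l>d$; as $\langle x\rangle\sim 1$ this gives the bound. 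So assume $|x|\ge 1$ and split the domain into $A_1=\{|y|\le \tfrac{|x|}{2}\}$, $A_2=\{\tfrac{|x|}{2}<|y|\le 2|x|\}$ and $A_3=\{|y|>2|x|\}$.

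On $A_1$ one has $|x-y|\sim|x|$, so the contribution is $\lesssim |x|^{-k}\int_{|y|\le|x|/2}\langle y\rangle^{-l}\,\d y$, which is $\lesssim|x|^{-k}$ when $l>d$ and $\lesssim|x|^{-k}\,|x|^{d-l}=|x|^{-(k+l-d)}$ when $l<d$. On $A_2$ one has $\langle y\rangle\sim|x|$, hence the contribution is $\lesssim |x|^{-l}\int_{|x-y|\le 3|x|}|x-y|^{-k}\,\d y\lesssim|x|^{-l}|x|^{d-k}=|x|^{-(k+l-d)}$, using $k<d$. On $A_3$ one has $|x-y|\sim|y|$, so the contribution is $\lesssim\int_{|y|>2|x|}|y|^{-k-l}\,\d y\lesssim|x|^{d-k-l}=|x|^{-(k+l-d)}$, using $k+l>d$. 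Summing the three pieces and recalling $|x|\ge 1$, the worst term is $|x|^{-\min\{k,\,k+l-d\}}\sim\langle x\rangle^{-\min\{k,\,k+l-d\}}$, which proves \eqref{eq-estimates-GV-1-3-24}.

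To obtain \eqref{eq-estimates-GV-1}, I would first perform the $y$–integration: applying \eqref{eq-estimates-GV-1-3-24} with $k=\sigma\in[0,d)$ and $l=\beta_2>d$ (so that $\sigma+\beta_2>d$ and $\min\{\sigma,\sigma+\beta_2-d\}=\sigma$) gives $\int_{\mathbb R^d}|x-y|^{-\sigma}\langle y\rangle^{-\beta_2}\,\d y\lesssim\langle x\rangle^{-\sigma}$. It remains to bound $\int_{\mathbb R^d}\langle x\rangle^{-\sigma}\langle x-\tau\rangle^{-\beta_1}\,\d x$; using $\langle x\rangle^{-\sigma}\le|x|^{-\sigma}=\bigl|(x-\tau)-(-\tau)\bigr|^{-\sigma}$ and substituting $y=x-\tau$, this is at most $\int_{\mathbb R^d}\bigl|y-(-\tau)\bigr|^{-\sigma}\langle y\rangle^{-\beta_1}\,\d y$, to which \eqref{eq-estimates-GV-1-3-24} applies with $k=\sigma$, $l=\beta_1>d$; since $\min\{\sigma,\sigma+\beta_1-d\}=\sigma$ we conclude $\lesssim\langle-\tau\rangle^{-\sigma}=\langle\tau\rangle^{-\sigma}$, as claimed.

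There is no genuine obstacle here: the lemma is a routine exercise in real analysis, and the only mild care needed is the bookkeeping of the three regions in the first estimate together with checking, in each application, that the exponent $\min\{k,k+l-d\}$ collapses to the stated value (the hypothesis $l>d$ forces it to equal $k$, while $l<d$ makes it $k+l-d$). The borderline case $l=d$ of \eqref{eq-estimates-GV-1-3-24}, in which the $A_1$–contribution picks up a harmless logarithmic factor, does not occur in the applications of this paper and can in any case be absorbed by decreasing $k$ slightly, so I will not dwell on it.
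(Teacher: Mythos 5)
Your argument is correct, and since the paper itself offers no proof of this lemma (it is quoted verbatim as Lemma 3.8 of \cite{GV06}), there is nothing internal to compare it against; the three-region decomposition $\{|y|\le |x|/2\}$, $\{|x|/2<|y|\le 2|x|\}$, $\{|y|>2|x|\}$ you use is the standard argument and is essentially how the cited result is proved, and your reduction of \eqref{eq-estimates-GV-1} to two applications of \eqref{eq-estimates-GV-1-3-24} (first in $y$, then in $x$ after the translation $y=x-\tau$ and the bound $\langle x\rangle^{-\sigma}\le |x|^{-\sigma}$) is clean and complete, with the hypotheses $\beta_1,\beta_2>d$ correctly forcing $\min\{\sigma,\sigma+\beta_i-d\}=\sigma$. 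Your remark about the borderline case $l=d$ is apt: as literally stated the estimate \eqref{eq-estimates-GV-1-3-24} acquires a logarithmic loss there (the $A_1$ region gives $|x|^{-k}\log|x|$ while the claimed bound is $\langle x\rangle^{-k}$), so this is a genuine, if harmless, imprecision in the quoted statement rather than in your proof; in every application in the paper one has $l>d$ (e.g.\ $k=d-2$, $l=\delta>d+2$), so the clean bound is what is actually used, exactly as you observe.
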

		
		Next, we give a brief overview of $L^p$-Fourier multipliers, which will be occasionally required in our analysis. For $u \in L^\infty(\mathbb{R}^n)$, define $T_u f = \mathcal{F}^{-1}(u\widehat{f})$ on $C_c^\infty(\mathbb{R}^n)$. The function $u$ is an $L^p$-Fourier multiplier if $T_u$ extends to a bounded operator on $L^p(\mathbb{R}^n)$. Denote by $M_p(\mathbb{R}^n)$ ($1 \leq p \leq \infty$) the space of such multipliers, equipped with the operator norm of $T_u$. Key properties of $M_p(\mathbb{R}^n)$ include:
		\begin{align*}
			M_p(\mathbb{R}^n) &= M_{p'}(\mathbb{R}^n) \quad (1/p + 1/p' = 1,\quad 1< p<\infty), \\
			M_p(\mathbb{R}^n) &\subset M_q(\mathbb{R}^n) \quad (1 \leq p \leq q \leq 2). 
		\end{align*}
		For $1 < p < \infty$, the Marcinkiewicz and H\"{o}rmander--Mihlin theorems characterize $M_p$-multipliers, but fail for $p=1$. The following result provides sufficient conditions for $M_1(\mathbb{R}^n)$ by using Bernstein's theorem (see e.g. in \cite{Sj-1970}). We also denote by $\mathcal{B}(X)$ the space of all bounded linear operators on $X$.
		\begin{lemma}[{\cite[Lemma 2.1]{DDY}}]\label{lem: Fourier Multiplier by Bernstein}
			Let $L \in \mathbb{N}$ with $L > \frac{d}{2}$. Assume that a measurable function $u$ satisfies one of the following conditions:\\
			$(a)$ $ u \in C^L(\mathbb{R}^d) $ and there exists a constant $ A_1 > 0 $ such that for all $ |\gamma| \leq L $
			$$
			\left|\partial^\gamma u(\xi)\right| \leq A_1 (1 + |\xi|)^{-\varepsilon_1 - |\gamma|}, \quad \xi \in \mathbb{R}^d.
			$$
			$(b)$ $ u \in C^L(\mathbb{R}^d \setminus \{0\}) $ and there exist constants $ \varepsilon_2, \varepsilon_3 > 0 $ such that for all $ |\gamma| \leq L $
			\begin{equation}\label{eq:condition for FM}
				\left| \partial^\gamma u(\xi)\right|  \leq A_2 \min\left\{|\xi|^{\varepsilon_2 - |\gamma|}, \,\,|\xi|^{-\varepsilon_3 - |\gamma|}\right\}, \quad \xi \in \mathbb{R}^d \setminus \{0\}.
			\end{equation}
			Then $u \in M_p(\mathbb{R}^d)$ for all $1\le p\le \infty$, and the norm $\|u\|_{M_p}$ depends only on  $d$, $L$, $A_i$, $p$ and $\varepsilon_i$ $(i = 1, 2, 3)$.
		\end{lemma}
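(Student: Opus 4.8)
The plan is to prove the sharper statement that $\mathcal{F}^{-1}u\in L^1(\R^d)$ under either hypothesis, from which the full conclusion follows immediately. Indeed, once $k:=\mathcal{F}^{-1}u\in L^1(\R^d)$, the operator $T_u$ is nothing but convolution with the $L^1$ kernel $k$, so Young's inequality gives $\|T_uf\|_{L^p}\le\|k\|_{L^1}\|f\|_{L^p}$ for \emph{every} $1\le p\le\infty$ simultaneously, with $\|u\|_{M_p}\le\|k\|_{L^1}$; thus no separate duality or interpolation argument is required. The integrability of $\mathcal{F}^{-1}u$ is precisely where the assumption $L>\frac d2$ enters, through a scale-adapted version of Bernstein's theorem.

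First I would fix a dyadic partition of unity $1=\sum_{j\in\Z}\phi(2^{-j}\xi)$ (plus a smooth bump at the origin in case $(a)$), with $\phi$ supported in a fixed annulus $\{|\xi|\sim1\}$, and set $u_j(\xi)=u(\xi)\phi(2^{-j}\xi)$, so $u=\sum_j u_j$ with each $u_j$ Fourier-supported in $\{|\xi|\sim R\}$, $R:=2^j$. The key estimate, obtained by Cauchy--Schwarz against the weight $(1+R|x|)^L$, is the scaled Bernstein bound
\[
\|\mathcal{F}^{-1}g\|_{L^1}\les R^{-d/2}\Big(\sum_{|\gamma|\le L}R^{2|\gamma|}\|\partial^\gamma g\|_{L^2}^2\Big)^{1/2},
\]
valid for any $g$ with Fourier support in $\{|\xi|\sim R\}$. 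Here the factor $R^{-d/2}$ comes exactly from $\int_{\R^d}(1+R|x|)^{-2L}\,\d x\les R^{-d}$, which converges \emph{only} because $2L>d$, while the second factor uses the Plancherel identity $\|x^\gamma\mathcal{F}^{-1}g\|_{L^2}=c\,\|\partial^\gamma g\|_{L^2}$.

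It then remains to feed the hypotheses into this estimate and sum a geometric series. By the Leibniz rule together with $|\partial^\gamma[\phi(2^{-j}\cdot)]|\les R^{-|\gamma|}$, the decay assumptions give, on $\{|\xi|\sim R\}$, the bound $|\partial^\gamma u_j(\xi)|\les R^{-\varepsilon_1-|\gamma|}$ in case $(a)$ and $|\partial^\gamma u_j(\xi)|\les\min\{R^{\varepsilon_2},R^{-\varepsilon_3}\}R^{-|\gamma|}$ in case $(b)$. Since the annulus has measure $\les R^d$, each summand satisfies $R^{2|\gamma|}\|\partial^\gamma u_j\|_{L^2}^2\les R^{d-2\varepsilon_1}$ in case $(a)$, respectively $R^{d}\min\{R^{2\varepsilon_2},R^{-2\varepsilon_3}\}$ in case $(b)$, the $|\gamma|$-powers cancelling in both. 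Plugging into the scaled Bernstein estimate yields $\|\mathcal{F}^{-1}u_j\|_{L^1}\les 2^{-j\varepsilon_1}$ in case $(a)$ (summed over $j\ge0$, the origin bump treated at scale $R\sim1$), and $\|\mathcal{F}^{-1}u_j\|_{L^1}\les\min\{2^{j\varepsilon_2},2^{-j\varepsilon_3}\}$ in case $(b)$ (summed over all $j\in\Z$, using $2^{j\varepsilon_2}$ for $j<0$ and $2^{-j\varepsilon_3}$ for $j\ge0$). In both cases $\sum_j\|\mathcal{F}^{-1}u_j\|_{L^1}<\infty$ because every $\varepsilon_i>0$, so $\mathcal{F}^{-1}u\in L^1(\R^d)$, completing the proof. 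The one delicate point---and the main obstacle---is the scaled Bernstein estimate: one must track the $R$-dependence faithfully so that the $|\gamma|$-powers cancel and both frequency tails become genuinely geometric, the hypothesis $L>\frac d2$ serving exactly to make the weight $(1+R|x|)^{-2L}$ integrable and thereby produce the decisive gain $R^{-d/2}$.
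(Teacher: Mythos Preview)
Your proof is correct and follows exactly the approach the paper signals: the lemma is quoted from \cite{DDY} without proof, the only hint being the remark that it rests on Bernstein's theorem. Your dyadic Littlewood--Paley decomposition combined with the scale-adapted Bernstein bound (Cauchy--Schwarz against the weight $(1+R|x|)^L$, using $2L>d$) is precisely that argument, and the bookkeeping of the $R$-powers so that the $|\gamma|$-factors cancel and the remaining series is geometric in $2^{\pm j\varepsilon_i}$ is carried out correctly.
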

		
		We establish a Fourier transform estimate that will be frequently employed in the low-energy analysis.
		\begin{lemma}\label{oscillatory estimates}
			Let $b>-1$ and $k>b+1$, and assume that $\psi(\lambda)\in C^k(\mathbb{R})$ such that
			\begin{equation}\label{symbol-est}
				\Big|\frac{d^l}{d\lambda^l} \psi(\lambda)\Big|\lesssim \lambda^{b-l}, \quad l=0,1,\cdots,k.
			\end{equation}
			Then, we have  
			\begin{equation}\label{eq.oscillatory est}
				\left| \int_0^\infty e^{i\lambda x}\psi(\lambda)\chi(\lambda)\d\lambda\right| \lesssim \langle x\rangle^{-(b+1)},\quad x\in\mathbb{R},
			\end{equation}
			where $\chi(\lambda)$ is a smooth function such that $\chi\equiv1$ on $(-\lambda_0/2,\,\lambda_0/2)$ and $\text{supp}\chi\subset[-\lambda_0,\,\lambda_0]$ for some fixed $\lambda_0>0$.
		\end{lemma}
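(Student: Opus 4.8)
The plan is to prove \eqref{eq.oscillatory est} by separating the non-oscillatory regime $|x|\lesssim 1$ from the oscillatory regime $|x|\gtrsim 1$, and in the latter to run a dyadic (Littlewood--Paley) decomposition in $\lambda$, estimating each dyadic block in two complementary ways. Write $I(x)$ for the integral on the left of \eqref{eq.oscillatory est}. For $|x|\le 1$ we simply discard the oscillation: since $\chi$ is supported in $[-\lambda_0,\lambda_0]$ and $|\psi(\lambda)|\lesssim\lambda^{b}$ with $b>-1$, we get $|I(x)|\le\int_0^{\lambda_0}|\psi(\lambda)|\,\d\lambda\lesssim\int_0^{\lambda_0}\lambda^{b}\,\d\lambda<\infty$, which is $\lesssim\langle x\rangle^{-(b+1)}$ on this range.

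Now fix $|x|\ge 1$ and $x\neq 0$ (the case $x<0$ is identical, since only $|x|$ enters). Choose $\phi\in C_c^\infty\big((1/2,2)\big)$ with $\sum_{j\in\Z}\phi(2^{-j}\lambda)=1$ for $\lambda>0$, and set $a_j(\lambda):=\psi(\lambda)\chi(\lambda)\phi(2^{-j}\lambda)$, $I_j(x):=\int_0^\infty e^{i\lambda x}a_j(\lambda)\,\d\lambda$, so that $I(x)=\sum_{2^j\lesssim\lambda_0}I_j(x)$. On $\operatorname{supp}\phi(2^{-j}\cdot)$ one has $\lambda\sim 2^j$; since $\chi$ and all its derivatives are bounded (and $\chi\equiv1$ near $0$, so derivatives of $\chi$ do not contribute on the small-$\lambda$ blocks) and $|\partial_\lambda^l\phi(2^{-j}\lambda)|\lesssim 2^{-jl}$, the hypothesis \eqref{symbol-est} gives $|a_j^{(l)}(\lambda)|\lesssim 2^{j(b-l)}$ for $0\le l\le k$, uniformly in the relevant range of $j$. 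From this we obtain the trivial bound $|I_j(x)|\lesssim 2^{j(b+1)}$, and, integrating by parts $k$ times (the boundary terms vanish because $a_j$ is compactly supported in $(0,\infty)$), the oscillatory bound
$$
|I_j(x)|=\frac{1}{|x|^{k}}\Big|\int_0^\infty e^{i\lambda x}a_j^{(k)}(\lambda)\,\d\lambda\Big|\lesssim |x|^{-k}\,2^{j(b-k+1)}.
$$

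It remains to sum the blocks, splitting at the critical scale $2^{j_0}\sim|x|^{-1}$. For $2^j\lesssim|x|^{-1}$ we use the trivial bound together with $b+1>0$, giving $\sum_{2^j\lesssim|x|^{-1}}2^{j(b+1)}\lesssim|x|^{-(b+1)}$; for $|x|^{-1}\lesssim 2^j\lesssim\lambda_0$ we use the integration-by-parts bound together with $k>b+1$ (so that the exponent $b-k+1<0$), giving $\sum_{2^j\gtrsim|x|^{-1}}|x|^{-k}2^{j(b-k+1)}\lesssim |x|^{-k}\cdot|x|^{-(b-k+1)}=|x|^{-(b+1)}$. Combining the two regimes with the bound on $|x|\le1$ yields \eqref{eq.oscillatory est}. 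The only real point requiring care --- and the reason the hypotheses read as they do --- is the endpoint bookkeeping near $\lambda=0$: $b>-1$ is exactly what makes the non-oscillatory estimate and the low-frequency geometric sum converge, while $k>b+1$ is exactly what makes the high-frequency sum of the integration-by-parts estimates converge; everything away from these balance points is routine. (An equivalent route avoiding the dyadic decomposition is to insert a single smooth cutoff $\eta_0(\lambda|x|)$ separating $\lambda\lesssim|x|^{-1}$ from $\lambda\gtrsim|x|^{-1}$, bounding the first part trivially and integrating by parts $k$ times in the second while tracking the $|x|^{l}$ growth coming from derivatives of $\eta_0(\lambda|x|)$ via the Leibniz rule; the resulting arithmetic is the same.)
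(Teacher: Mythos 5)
Your proposal is correct and, at its core, follows the same strategy as the paper: split off $|x|\le 1$ trivially, then split the $\lambda$-integral at the critical scale $\lambda\sim|x|^{-1}$, bounding the low-frequency part by its size (using $b>-1$) and the high-frequency part by $k$-fold integration by parts (using $k>b+1$). The paper does this with a single cutoff $\phi_0(\lambda|x|)$, $\phi_1(\lambda|x|)$ rather than your dyadic sum, which is exactly the ``equivalent route'' you mention in your final parenthesis, so the two arguments differ only in bookkeeping.
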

		\begin{proof}
			We denote the integral in \eqref{eq.oscillatory est} by $I(x)$. 
			If $|x|\le 1$, \eqref{eq.oscillatory est} can be obtained by the simple fact $|I(x)|\lesssim 1$. If $|x|>1$, we split $I(x)=I_0(x)+I_1(x)$, where
			\begin{equation*}
				I_i(x)=\int_0^\infty e^{i\lambda x}\psi(\lambda)\chi(\lambda)\phi_i(\lambda|x|)\d\lambda, \quad \text{for } i=0,1,
			\end{equation*}
			where $\phi_0 \in C^{\infty}(\R)$ such that $\phi_0(\lambda)=1$ when $|\lambda|\leq 1/2$ and $\phi_0(\lambda)=0$ when $|\lambda|\geq 1$, and $\phi_1(\lambda)=1-\phi_0$.
			Thus, a direct computation shows that 
			$|I_0(x)|\lesssim |x|^{-(b+1)}$. Next, by applying integration by parts $k$ times to $I_1(x)$, and then we have
			\begin{equation*}
				|I_1(x)|\lesssim |x|^{-k}\int_{\frac{1}{2}|x|^{-1}}^\infty \lambda^{b-k}\d\lambda\lesssim |x|^{-(b+1)}.
			\end{equation*}
			This completes the proof.
		\end{proof}
		
		\section{Proof of Theorem \ref{thm-main result-rank one-d>3}}\label{sec3}
		\setcounter{equation}{0}
		In this section, we explore the $L^p$ boundedness for rank one perturbations  in dimensions $d\ge3$. 
		To start, we fix a small $0<\lambda_0<1$ and choose  a smooth cut-off function $\chi(\lambda)$ such that
		\begin{equation}\label{eq-cut-off function}
			\chi(\lambda)=1,\ \quad 0<\lambda<\frac{\lambda_0}{2};\ \ \ \ \mbox{and} \ \ \  \chi(\lambda)=0,\quad\ \lambda>\lambda_0.
		\end{equation}
		Let $\widetilde{\chi}(\lambda)=1-\chi(\lambda)$.
		We denote
		\begin{equation}\label{eq-G-0504}
			G_+^{\alpha}(\lambda):=\displaystyle \frac{\alpha}{1+\alpha F^{+}(\lambda^2)}.
		\end{equation}
		Using the above cut-off function allows us to split the second term on the right-hand side of \eqref{eq-stationary representation-V} into the low energy component
		\begin{equation}\label{eq-low energy wave operator}
			\mathcal{W}_-^l(H_\alpha,H_0)=\int_0^\infty\lambda G_+^{\alpha}(\lambda)\chi(\lambda) R_0^+(\lambda^2)\varphi\langle (R_0^+(\lambda^2)-R_0^-(\lambda^2))\cdot,\varphi\rangle \d\lambda,
		\end{equation}
		and the high energy component
		\begin{equation}\label{eq-high energy wave operator}
			\mathcal{W}_-^h(H_\alpha,H_0)=\int_0^\infty \lambda G_+^{\alpha}(\lambda)\widetilde{\chi}(\lambda) R_0^+(\lambda^2)\varphi\langle (R_0^+(\lambda^2)-R_0^-(\lambda^2))\cdot,\varphi\rangle \d\lambda.
		\end{equation}
		The integral kernels of $\mathcal{W}_-^l(H_\alpha,H_0)$ and $\mathcal{W}_-^h(H_\alpha,H_0)$ are given, respectively, by   
		\begin{equation}\label{eq-low energy wave operator-1}
			\begin{split}
				\frac{1}{\pi i}\int_0^\infty\lambda G_+^{\alpha}(\lambda){\chi}(\lambda) \Big(\int_{\mathbb{R}^{2d}}R_0^+(\lambda^2;x, x_1)(R_0^+-R_0^-)(\lambda^2;x_2,y)\varphi(x_1)\varphi(x_2)\d x_1\d x_2\Big) \d\lambda,
			\end{split}
		\end{equation}
		and
		\begin{equation}\label{eq-high energy wave operator-2}
			\begin{split}
				\frac{1}{\pi i} \int_0^\infty\lambda G_+^{\alpha}(\lambda)\widetilde{\chi}(\lambda)\Big(\int_{\mathbb{R}^{2d}}R_0^+(\lambda^2;x,x_1)(R_0^+-R_0^-)(\lambda^2;x_2,y)\varphi(x_1)\varphi(x_2)\d x_1\d x_2\Big) \d\lambda.
			\end{split}
		\end{equation}
		The remainder of this section focuses on analyzing the pointwise behavior of these kernels. 
		Theorem \ref{thm-main result-rank one-d>3} follows immediately from the boundedness of $\mathcal{W}_-^l$ and $\mathcal{W}_-^h$ established in Propositions \ref{pro-low energy-d>3} and \ref{pro-high energy-d>3}.

		\subsection{The low energy contribution}
		In this subsection, we aim to prove Proposition \ref{pro-low energy-d>3} below. First, we establish asymptotic  properties of $F^\pm(\lambda^2)$ and $G_+^{\alpha}(\lambda)$ as $\lambda\to0$. Similar results have been established in \cite{CHZ23}.

		\begin{lemma}\label{lemma-F low energy-d>3}
			Let $d\ge3$ and assume  \eqref{eq-decay condition} holds. For $0<\lambda<\frac12$, we have
			\begin{equation}\label{eq-F expansion-d>3}
				F^\pm(\lambda^2)=a_0+r_d^\pm(\lambda),
			\end{equation}
			where 
			\begin{equation}\label{eq-F expansion-d>3-01}
				a_0=c_d\left\langle \int_{\mathbb{R}^d}|x-y|^{2-d}\varphi(y)\d y,\varphi(x)\right\rangle\neq 0.
			\end{equation}
			Moreover, the remainder terms $r_d^\pm(\lambda)$ satisfy
			\begin{equation}\label{eq: est for r_d-0}
				\Big|\frac{\d^k}{\d\lambda^k}r_d^\pm(\lambda)\Big|\lesssim \, \lambda^{1-k},\quad k=0,1,\cdots,\left[\frac{d+3}{2}\right].
			\end{equation}
		\end{lemma}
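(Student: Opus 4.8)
The plan is to insert the decomposition \eqref{eq：exp for reso-0-1} of the free resolvent kernel directly into the definition \eqref{eq-F function-limiting absorption} of $F^\pm(\lambda^2)$, thereby separating off the $\lambda$-independent leading term from a remainder governed by $r_d^\pm(\lambda,|x-y|)$. Concretely, writing $F^\pm(\lambda^2)=\iint R_0^\pm(\lambda^2;x,y)\varphi(y)\overline{\varphi(x)}\,\d x\,\d y$ (with $\varphi$ real), we substitute $R_0^\pm(\lambda^2;x,y)=c_d|x-y|^{2-d}+r_d^\pm(\lambda,|x-y|)$. The first piece yields exactly the constant $a_0$ in \eqref{eq-F expansion-d>3-01}, and the integral defining $a_0$ converges because $|x-y|^{2-d}$ is locally integrable in dimension $d\ge 3$ and $\varphi$ decays like $\langle x\rangle^{-\delta}$ with $\delta>d+2$; here one applies Lemma \ref{lemma-GV} (with $k=d-2<d$, $l=\delta$, $k+l>d$) to control the inner integral $\int|x-y|^{2-d}|\varphi(y)|\,\d y\lesssim\langle x\rangle^{-(d-2)}$, then integrates against $|\varphi(x)|\lesssim\langle x\rangle^{-\delta}$. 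The nonvanishing $a_0\neq 0$ follows since $c_d|x-y|^{2-d}$ is (a positive multiple of) the kernel of $(-\Delta)^{-1}$, so $a_0=c_d'\|\,|\nabla|^{-1}\varphi\|_2^2>0$ as $\varphi\not\equiv 0$; alternatively one invokes that $F^\pm(\lambda^2)\to F(0)=\langle(-\Delta)^{-1}\varphi,\varphi\rangle$ and this is the standard fact used in \cite{CHZ23}.

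The remaining term is $r_d^\pm(\lambda):=\iint r_d^\pm(\lambda,|x-y|)\varphi(y)\overline{\varphi(x)}\,\d x\,\d y$, and the task is to verify the derivative bounds \eqref{eq: est for r_d-0}. The plan is to differentiate under the integral sign and split the $(x,y)$-region according to whether $\lambda|x-y|\le 1$ or $\lambda|x-y|>1$, matching the two cases of estimate \eqref{eq: est r_d>3}. On the region $\lambda|x-y|\le 1$ we have $|\partial_\lambda^k r_d^\pm(\lambda,|x-y|)|\lesssim\lambda^{1-k}|x-y|^{3-d}$; since $3-d$ may be negative this is again locally integrable for $d\ge 3$ (when $d=3$ it is bounded), and Lemma \ref{lemma-GV} with $k=d-3$ gives $\iint|x-y|^{3-d}|\varphi(y)||\varphi(x)|\,\d x\,\d y<\infty$, producing the bound $\lambda^{1-k}$ as claimed. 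On the region $\lambda|x-y|>1$ we have $|\partial_\lambda^k r_d^\pm|\lesssim\lambda^{\frac{d-3}{2}-k}(|x-y|^{-\frac{d-1}{2}}+|x-y|^2)$; the factor $\lambda^{\frac{d-3}{2}-k}$ is worse than $\lambda^{1-k}$ only when $\frac{d-3}{2}<1$, i.e. $d=3,4$, but in all cases we use $\lambda|x-y|>1$, i.e. $|x-y|>\lambda^{-1}$, to trade powers: on this region $\lambda^{\frac{d-3}{2}-k}|x-y|^{-\frac{d-1}{2}}=\lambda^{1-k}\cdot(\lambda|x-y|)^{-\frac{d-1}{2}}\lesssim\lambda^{1-k}$, and similarly the $|x-y|^2$ term is absorbed using that the rapid decay of $\varphi$ (with $\delta>d+2$) makes $\iint|x-y|^2|\varphi(y)||\varphi(x)|\,\d x\,\d y$ finite, combined with $\lambda^{\frac{d-3}{2}-k}\lesssim\lambda^{1-k}$ once we further note $\lambda<\tfrac12$ together with a spare power of $\lambda|x-y|\ge 1$ if needed to handle $d=3,4$. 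Collecting the two regions gives $|\partial_\lambda^k r_d^\pm(\lambda)|\lesssim\lambda^{1-k}$ for $k=0,1,\dots,\big[\frac{d+3}{2}\big]$, which is the allowed range of $k$ in \eqref{eq: est r_d>3}.

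The main obstacle I anticipate is the bookkeeping in the high-distance region $\lambda|x-y|>1$ for the borderline dimensions $d=3,4$, where the bare power $\lambda^{\frac{d-3}{2}-k}$ coming from \eqref{eq: est r_d>3} is a priori weaker than the target $\lambda^{1-k}$; the resolution is precisely to spend part of the factor $(\lambda|x-y|)^{>0}$ that is available on that region, exchanging it for extra negative powers of $\lambda$, at the cost of extra (harmless) positive powers of $|x-y|$ which are still integrable against $\varphi\otimes\varphi$ thanks to $\delta>d+2$. A minor point to be careful about is justifying differentiation under the integral sign and the use of the dominated convergence theorem uniformly for $0<\lambda<\tfrac12$, which follows from the locally uniform integrable bounds just described. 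This argument parallels the one in \cite[§ ]{CHZ23} referenced in the statement, so I would keep the write-up brief and point to that reference for the analogous computation.
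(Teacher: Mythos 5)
Your proposal follows exactly the paper's (very brief) proof: insert the decomposition $R_0^\pm(\lambda^2;x,y)=c_d|x-y|^{2-d}+r_d^\pm(\lambda,|x-y|)$ into the definition of $F^\pm(\lambda^2)$, identify $a_0=\langle(-\Delta)^{-1}\varphi,\varphi\rangle\neq 0$ via the fundamental solution, and bound the remainder using \eqref{eq: est r_d>3} together with \eqref{eq-decay condition}; you merely supply the region-splitting bookkeeping that the paper declares ``immediate.'' Two points in that bookkeeping deserve correction. First, a small one: your displayed identity $\lambda^{\frac{d-3}{2}-k}|x-y|^{-\frac{d-1}{2}}=\lambda^{1-k}(\lambda|x-y|)^{-\frac{d-1}{2}}$ is an equality only when $d=3$; for $d>3$ the left side is smaller by a factor $\lambda^{d-3}$, so it should be an inequality ``$\le$'' — the conclusion survives. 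Second, and more substantively, your treatment of the $|x-y|^2$ term on $\{\lambda|x-y|>1\}$ for $d=3,4$ is not justified as written: trading a power $(\lambda|x-y|)^{a}$ with $a\ge\frac{5-d}{2}$ produces $|x-y|^{2+a}$, and $\iint |x-y|^{2+a}|\varphi(x)||\varphi(y)|\,\d x\,\d y<\infty$ requires decay of order $\delta\ge\frac{d+9}{2}$, which strictly exceeds $d+2$ when $d=3,4$; so the parenthetical ``harmless \dots thanks to $\delta>d+2$'' fails for a single $\delta$ barely above $d+2$ (indeed, for $d=3$ and $|\varphi|\sim\langle x\rangle^{-5-\varepsilon}$ with $\varepsilon<1$, the far-region integral of $|x-y|^2|\varphi(x)||\varphi(y)|$ is of size $\lambda^{\varepsilon}$, not $O(\lambda)$, so the $k=0$ bound cannot be extracted from \eqref{eq: est r_d>3} alone). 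This is resolved either by reading Condition $H_\alpha$(i) as the paper states it — the bound holds for \emph{every} $\delta>d+2$, so you may take $\delta$ as large as needed — or, if one insists on a fixed $\delta$ slightly above $d+2$, by replacing the crude far-region bound in \eqref{eq: est r_d>3} with a sharper one (e.g.\ for $d=3$ the explicit kernel gives $|\partial_\lambda^k r_3^\pm(\lambda,|x-y|)|\lesssim |x-y|^{k-1}$, which suffices). You should state explicitly which of these you are using; with that clarification the argument is complete and coincides with the paper's.
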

		\begin{proof}
			The expansion \eqref{eq-F expansion-d>3} and the estimate \eqref{eq: est for r_d-0} follow immediately from \eqref{eq：exp for  reso-0-1},  \eqref{eq: est r_d>3}, together with the decay assumption \eqref{eq-decay condition}. Moreover, since $c_d|x|^{2-d}$ is the fundamental solution for $-\Delta$, we have  $a_0=\langle(-\Delta)^{-1}\varphi,\, \varphi\rangle \neq 0$. 
		\end{proof}

		By Lemma \ref{lemma-F low energy-d>3} and Taylor expansion, we obtain the following asymptotic expansions for $G^{\alpha}_+(\lambda)\chi(\lambda)$:
		\begin{lemma}\label{lm3.2}
			Let $d\ge3$ and assume  \eqref{eq-decay condition} holds. Then there exists a small $\lambda_0>0$ such that 
			\begin{equation}\label{eq-G-d>3}
				G^{\alpha}_+(\lambda)=\frac{\alpha}{1+\alpha a_0}+\widetilde{r}_d(\lambda), \qquad   0<\lambda<\lambda_0,
			\end{equation}
			where $a_0$ is defined in \eqref{eq-F expansion-d>3-01}, and the remainder term $\widetilde{r}_d(\lambda)$ satisfies
			\begin{equation}\label{eq-r-d odd-1}
				\bigg|\frac{\d^k }{\d\lambda^k}\widetilde{r}_d(\lambda)\bigg|\lesssim \lambda^{1-k},\quad \quad k=0,1,\cdots, \left[\frac{d+3}{2}\right].
			\end{equation}
		\end{lemma}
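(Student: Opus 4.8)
The plan is to derive \eqref{eq-G-d>3} directly from the expansion of $F^\pm(\lambda^2)$ provided by Lemma \ref{lemma-F low energy-d>3}. Recall that $G_+^\alpha(\lambda) = \alpha(1+\alpha F^+(\lambda^2))^{-1}$, and by Lemma \ref{lemma-F low energy-d>3} we can write, for $0<\lambda<\tfrac12$,
\[
1+\alpha F^+(\lambda^2) = (1+\alpha a_0) + \alpha r_d^+(\lambda) = (1+\alpha a_0)\bigl(1 + \tfrac{\alpha}{1+\alpha a_0}r_d^+(\lambda)\bigr).
\]
Here $1+\alpha a_0 \ne 0$: indeed, by the spectral assumption \eqref{eq-spectral condition} we have $|1+\alpha F^+(\lambda^2)| \ge c_0$ for all $\lambda>0$, and letting $\lambda\to 0^+$ (using \eqref{eq: est for r_d-0} with $k=0$, which gives $r_d^+(\lambda)\to 0$) yields $|1+\alpha a_0| \ge c_0 > 0$. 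So the first term on the right of \eqref{eq-G-d>3} is well-defined.

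Next I would choose $\lambda_0 \in (0,\tfrac12)$ small enough that $\bigl|\tfrac{\alpha}{1+\alpha a_0} r_d^+(\lambda)\bigr| \le \tfrac12$ for all $0<\lambda<\lambda_0$; this is possible since $r_d^+(\lambda) = O(\lambda)$ by \eqref{eq: est for r_d-0}. On this interval the geometric-series / Taylor expansion of $w\mapsto (1+w)^{-1}$ is valid, so
\[
G_+^\alpha(\lambda) = \frac{\alpha}{1+\alpha a_0}\cdot\frac{1}{1+\tfrac{\alpha}{1+\alpha a_0}r_d^+(\lambda)} = \frac{\alpha}{1+\alpha a_0} + \widetilde r_d(\lambda),
\]
where $\widetilde r_d(\lambda) := \tfrac{\alpha}{1+\alpha a_0}\bigl[(1+\tfrac{\alpha}{1+\alpha a_0}r_d^+(\lambda))^{-1} - 1\bigr]$. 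It remains to verify the derivative bounds \eqref{eq-r-d odd-1} on $\widetilde r_d$.

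For the derivative estimates, I would apply the Faà di Bruno / chain rule to the composition $g(\lambda) = h(r_d^+(\lambda))$ with $h(w) = \tfrac{\alpha}{1+\alpha a_0}[(1+cw)^{-1}-1]$, $c = \tfrac{\alpha}{1+\alpha a_0}$. Since $|cr_d^+(\lambda)| \le \tfrac12$ on $(0,\lambda_0)$, all derivatives $h^{(m)}$ are bounded on the relevant range of arguments, and $h(0)=0$ so the $m=0$ term in Faà di Bruno is absent — meaning every term in $\partial_\lambda^k \widetilde r_d$ contains at least one factor $\partial_\lambda^{j}r_d^+$ with $j\ge 1$, and the total number of $\lambda$-derivatives falling on the $r_d^+$ factors is exactly $k$. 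Combining this with the bound $|\partial_\lambda^{j}r_d^+(\lambda)|\lesssim \lambda^{1-j}$ from \eqref{eq: est for r_d-0} and the trivial bound $|r_d^+(\lambda)|\lesssim \lambda$ for the extra factors, each product term is $\lesssim \lambda^{\#\text{factors}}\cdot \lambda^{-(\text{derivatives})}$; one checks the worst case is a single factor $\partial_\lambda^k r_d^+ \lesssim \lambda^{1-k}$, all other terms being of higher order in $\lambda$ near $0$. This gives \eqref{eq-r-d odd-1} for $k=0,1,\dots,[\tfrac{d+3}{2}]$, the range for which \eqref{eq: est for r_d-0} is available. The only mild subtlety — and the one place to be slightly careful — is the bookkeeping in the Faà di Bruno step to confirm that no term is more singular than $\lambda^{1-k}$; everything else is routine given Lemma \ref{lemma-F low energy-d>3} and the spectral assumption.
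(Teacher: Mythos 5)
Your proposal is correct and follows essentially the same route as the paper, which obtains Lemma \ref{lm3.2} precisely by combining the expansion of $F^+(\lambda^2)$ from Lemma \ref{lemma-F low energy-d>3} with a Taylor (Neumann) expansion of $w\mapsto(1+w)^{-1}$, the derivative bounds \eqref{eq-r-d odd-1} following from \eqref{eq: est for r_d-0} exactly as in your Fa\`a di Bruno bookkeeping. One small remark: invoking the spectral assumption \eqref{eq-spectral condition} to get $1+\alpha a_0\neq 0$ is unnecessary (and not among the lemma's stated hypotheses), since $a_0=\langle(-\Delta)^{-1}\varphi,\varphi\rangle=\|(-\Delta)^{-1/2}\varphi\|_{L^2}^2>0$ and $\alpha>0$ already give $1+\alpha a_0>1$.
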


		\begin{proposition}\label{pro-low energy-d>3}
			Fix $\lambda_0$ as in Lemma \ref{lm3.2}. Let $d \geq 3$ and  the decay assumption \eqref{eq-decay condition} holds. Then the low-energy component $\mathcal{W}_-^l(H_\alpha,H_0)$ defined in \eqref{eq-low energy wave operator} is bounded on $L^p(\mathbb{R}^d)$ for all $1 \leq p \leq \infty$.
		\end{proposition}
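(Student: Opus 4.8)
The plan is to show that the integral kernel of $\mathcal{W}_-^l(H_\alpha,H_0)$ is \emph{admissible} in the sense of \eqref{eq-admissible condition}; since every operator with an admissible kernel is bounded on $L^p(\mathbb{R}^d)$ for all $1\le p\le\infty$, this yields the Proposition. Using the factorization $R_0^\pm(\lambda^2)\varphi$, the kernel in \eqref{eq-low energy wave operator-1} has the separated form
\[
K(x,y)=\frac{1}{\pi i}\int_0^\infty \lambda\,G_+^{\alpha}(\lambda)\,\chi(\lambda)\,\big(R_0^+(\lambda^2)\varphi\big)(x)\,\big((R_0^+-R_0^-)(\lambda^2)\varphi\big)(y)\,\d\lambda ,
\]
so it suffices to bound $\sup_x\int_{\mathbb{R}^d}|K(x,y)|\,\d y$ and $\sup_y\int_{\mathbb{R}^d}|K(x,y)|\,\d x$.

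\textbf{Decomposition.} Into this I would insert three ingredients from Section \ref{sec2}: the low-energy expansion $G_+^{\alpha}(\lambda)\chi(\lambda)=\tfrac{\alpha}{1+\alpha a_0}\chi(\lambda)+\widetilde r_d(\lambda)\chi(\lambda)$ of Lemma \ref{lm3.2} (with $\widetilde r_d$ obeying the symbol bounds \eqref{eq-r-d odd-1}); the representation \eqref{eq:expansion for free-5} of $R_0^+-R_0^-$, which is always of the oscillatory type $\lambda^{\frac{d-3}{2}}|x_2-y|^{-\frac{d-1}{2}}\big(e^{i\lambda|x_2-y|}J^+-e^{-i\lambda|x_2-y|}J^-\big)$ with $J^\pm$ satisfying Mihlin-type estimates; and a decomposition of $R_0^+(\lambda^2;x,x_1)$ adapted to the size of $\lambda|x-x_1|$, via the cut-off $\eta(\lambda|x-x_1|)$ (cf. \eqref{eq:expansion for free-4}, \eqref{eq-free kernel-1}), splitting off the small-argument, essentially $\lambda$-independent part $c_d|x-x_1|^{2-d}\eta(\lambda|x-x_1|)$ together with the error $r_d^+(\lambda,|x-x_1|)\eta(\lambda|x-x_1|)$ controlled by \eqref{eq:est for rd with cut}, from the large-argument oscillatory part $\lambda^{\frac{d-3}{2}}|x-x_1|^{-\frac{d-1}{2}}e^{i\lambda|x-x_1|}\Phi^+(\lambda|x-x_1|)\widetilde\eta(\lambda|x-x_1|)$. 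This writes $K$ as a finite sum of terms of the shape
\[
\iint \varphi(x_1)\varphi(x_2)\Big(\int_0^\infty e^{i\lambda(\varepsilon_1|x-x_1|+\varepsilon_2|x_2-y|)}\lambda^{a}\,\Psi\big(\lambda;\lambda|x-x_1|,\lambda|x_2-y|\big)\chi(\lambda)\,\d\lambda\Big)\d x_1\,\d x_2 ,
\]
with $\varepsilon_1,\varepsilon_2\in\{0,+1,-1\}$ and amplitude $\Psi$ satisfying symbol bounds in $\lambda$.

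\textbf{Estimation of the generic terms.} For every term with a genuinely nonzero phase one peels off the slowly varying factors $\Phi^\pm,J^\pm$ and applies Lemma \ref{oscillatory estimates} to the $\lambda$-integral, gaining $\langle \varepsilon_1|x-x_1|+\varepsilon_2|x_2-y|\rangle^{-(a+1)}$; for the purely non-oscillatory piece (where $\varepsilon_1=\varepsilon_2=0$, i.e. $R_0^+$ contributes its singular part times $\eta$) one estimates directly, noting that $\eta(\lambda|x-x_1|)$ confines the $\lambda$-integration to $\lambda\lesssim|x-x_1|^{-1}$ and produces the gain $|x-x_1|^{-(a+1)}$. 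One then inserts $|\varphi(x_1)\varphi(x_2)|\lesssim\langle x_1\rangle^{-\delta}\langle x_2\rangle^{-\delta}$ with $\delta>d+2$ (from \eqref{eq-decay condition}) and performs the $x_1,x_2$ integrations by Lemma \ref{lemma-GV}, transferring the decay and the local singularities from $|x-x_1|,|x_2-y|$ onto $x$ and $y$; a routine bookkeeping then checks \eqref{eq-admissible condition} for each such kernel.

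\textbf{Main obstacle.} The delicate contribution is the single term in which $R_0^+(\lambda^2;x,x_1)$ is replaced by its $\lambda$-independent singular part $c_d|x-x_1|^{2-d}\eta(\lambda|x-x_1|)$: this offers no oscillatory gain in the $x$-variable, and $|x-x_1|^{2-d}$ is not integrable over $\mathbb{R}^d_x$. The resolution combines two facts. First, on $\operatorname{supp}\eta(\lambda|x-x_1|)$ one has $\lambda\lesssim|x-x_1|^{-1}$, so for $|x|$ large the $\lambda$-integral runs only over $(0,c\langle x\rangle^{-1})$. Second, $R_0^+-R_0^-$ vanishes like $\lambda^{d-2}$ at $\lambda=0$; equivalently $\big((R_0^+-R_0^-)(\lambda^2)\varphi\big)(y)=c\,\lambda^{d-2}\int_{\S^{d-1}}e^{i\lambda y\cdot\omega}\widehat\varphi(\lambda\omega)\,\d\omega$, whence $\int_0^\infty\lambda G_+^{\alpha}(\lambda)\chi(\lambda)\eta(\lambda|x-x_1|)(R_0^+-R_0^-)(\lambda^2)\varphi(y)\,\d\lambda$ is the inverse Fourier transform of an amplitude supported in $|\xi|\lesssim\langle x\rangle^{-1}$, smooth away from the origin with only a conical singularity $\sim|\xi|$ there (coming from $\widetilde r_d$), hence of size $\lesssim \langle x\rangle\,(\langle x\rangle+\langle y\rangle)^{-(d+1)}$. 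Combining with the factor $\langle x\rangle^{2-d}$ produced by integrating $|x-x_1|^{2-d}$ against $|\varphi(x_1)|$ (Lemma \ref{lemma-GV}) gives $|K(x,y)|\lesssim \langle x\rangle^{3-d}(\langle x\rangle+\langle y\rangle)^{-(d+1)}$ on this piece, and then $\int|K(x,y)|\,\d y\lesssim\langle x\rangle^{2-d}$ and $\int|K(x,y)|\,\d x\lesssim\langle y\rangle^{2-d}$. Both are bounded precisely because $2d-2>d$, i.e. $d\ge 3$; this $L^1$-integrability of $\langle x\rangle^{2-2d}$ is exactly where the dimensional restriction enters, and is the structural reason why $d=1,2$ require the finer analysis — and lead to the $p=1$ dichotomy — of Section \ref{sec4}.
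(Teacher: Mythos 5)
Your plan for $d=3$ (purely oscillatory kernel, Lemma \ref{oscillatory estimates}, then admissibility) is essentially the paper's Step 1, but your global strategy — establish the two-sided admissibility bound \eqref{eq-admissible condition} for the whole low-energy kernel in every dimension $d\ge 3$ — breaks down at exactly the place you flag as the "main obstacle," and the resolution you propose there has a genuine gap. The pointwise bound $|K(x,y)|\lesssim \langle x\rangle^{3-d}(\langle x\rangle+\langle y\rangle)^{-(d+1)}$ would require on the order of $d+1$ controlled derivatives of the amplitude in $\lambda$ (equivalently, $d+1$ integrations by parts in $\xi$). But under \eqref{eq-decay condition} the function $G_+^{\alpha}(\lambda)\chi(\lambda)$ is only controlled through $\widetilde r_d$ up to order $\big[\frac{d+3}{2}\big]$ (Lemma \ref{lm3.2}, estimate \eqref{eq-r-d odd-1}), and $\widehat\varphi$ has only about two bounded derivatives when $\delta$ is just above $d+2$; for $d\ge 4$ one has $\big[\frac{d+3}{2}\big]\le d$, so the symbol regularity falls short not only of the decay $(\langle x\rangle+\langle y\rangle)^{-(d+1)}$ you claim but even of any decay strictly faster than $|y|^{-d}$, which is the minimum needed for the $y$-integral of this non-oscillatory piece to converge uniformly. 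Concretely, feeding the $\widetilde r_d$-part of $G_+^\alpha\chi$ into \eqref{eq:expansion for free-5} and Lemma \ref{oscillatory estimates} with the available $\big[\frac{d+3}{2}\big]$ derivatives yields at best $|x_2-y|^{-\frac{d-1}{2}}\langle x_2-y\rangle^{-\frac{d+1}{2}}\sim |x_2-y|^{-d}$, i.e.\ a logarithmically divergent row integral, with no compensating gain in $|x-x_1|$ left over; and on the $x$-side the factors $|x-x_1|^{2-d}$ (and $|x-x_1|^{3-d}$ from $r_d^+\eta$) are never integrable over $\mathbb{R}^d_x$, so the column bound $\sup_y\int|K(x,y)|\,\d x$ is likewise out of reach for these pieces within your decomposition. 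So the admissibility of the non-oscillatory part is not established, and the "routine bookkeeping" cannot be routine here.

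This is precisely the difficulty the paper's proof is built to avoid for $d\ge 4$: it never proves two-sided admissibility of the low-energy kernel. Instead, for the $L^\infty$ bound it uses \eqref{eq:expansion for free-4}, recognizes $\frac{\lambda}{\pi i}(R_0^+-R_0^-)$ as the spectral measure of $\sqrt{-\Delta}$, writes the non-oscillatory pieces $I_1,I_2,I_3$ as compositions with the Fourier multiplier $G_+^{\alpha}(\sqrt{-\Delta})\chi(\sqrt{-\Delta})$ (and its $x$-parametrized variants), and applies Lemma \ref{lem: Fourier Multiplier by Bernstein}, whose Bernstein-type hypothesis needs only $L>\frac d2$ derivatives — exactly compatible with $\big[\frac{d+3}{2}\big]$, whereas pointwise kernel decay better than $|y|^{-d}$ would need more than $d$; for the $L^1$ bound it switches to the representation \eqref{eq-free kernel-1}, in which both resolvent factors carry oscillatory phases, so that Lemma \ref{oscillatory estimates} produces decay in $\rho_\pm=|x-x_1|\pm|x_2-y|$ and only the one-sided bound $\sup_y\int|\cdot|\,\d x$ is checked; the intermediate $p$ then follow by interpolation. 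To repair your argument you would have to either import this multiplier/Bernstein machinery for the non-oscillatory-in-$x$ pieces (keeping track of the uniformity in the parameter $|x-x_1|$ inside $\eta(\lambda|x-x_1|)$) and use a second, fully oscillatory decomposition for the $L^1$ side, or assume far more regularity of $F^+(\lambda^2)$ and $\widehat\varphi$ than Condition $H_\alpha$ provides.
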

		
		\begin{proof}
			Let $\mathcal{W}_-^l(x, y)$ denote the integral kernel of $\mathcal{W}_-^l(H_\alpha,H_0)$, by \eqref{eq-stationary representation-one rank}, we have
			\begin{equation}\label{eq-ker-0703-1}
				\mathcal{W}_-^l(x, y)=	\frac{1}{\pi i}\int_0^\infty\lambda G_+^{\alpha}(\lambda){\chi}(\lambda) \Big(\int_{\mathbb{R}^{2d}}R_0^+(\lambda^2;x, x_1)(R_0^+-R_0^-)(\lambda^2;x_2,y)\varphi(x_1)\varphi(x_2)\d x_1\d x_2\Big) \d\lambda.
			\end{equation}
			We divide the proof into three steps.
			
			\textbf{Step 1: The $L^p-$boundedness for $d=3$ ($1\le p\le \infty$).}
			
			By \eqref{eq2.3}, the  integral of  \eqref{eq-ker-0703-1}  can be written as 
			$$ \frac{1}{16\pi^3 i}\int_{\mathbb{R}^{6}}\int_0^\infty \left( e^{ i\lambda\rho_+}-e^{ i\lambda\rho_-}\right) \lambda G_+^{\alpha}(\lambda){\chi}(\lambda)\d\lambda \frac{\varphi(x_1)\varphi(x_2)}{|x-x_1||x_2-y|}\d x_1\d x_2 \d\lambda,$$
			where $\rho_\pm=|x-x_1|\pm|x_2-y|$. Applying the expansion \eqref{eq-G-d>3} yields the decomposition:
			$$
			\mathcal{W}_-^l(x, y)=\Pi_0(x,y)+ \Pi_1^+(x,y)-\Pi_1^-(x,y),
			$$
			where 
			\begin{equation}\label{eq-Pi-0-0703}
				\Pi_0(x,y):=\frac{\alpha}{16\pi^3 ({1+\alpha a_0})i}\int_{\mathbb{R}^{6}}\int_0^\infty \left( e^{ i\lambda\rho_+}-e^{ i\lambda\rho_-}\right)\lambda \chi(\lambda)\d\lambda\, \frac{\varphi(x_1)\varphi(x_2)}{|x-x_1||x_2-y|} \d x_1\d x_2, 	
			\end{equation}
			and  
			\begin{equation}\label{eq-Pi-0-0703-2}
				\Pi_1^\pm(x,y):=\frac{\alpha}{16\pi^3 ({1+\alpha a_0})i}\int_{\mathbb{R}^{6}}\int_0^\infty e^{ i\lambda\rho_\pm} \lambda \widetilde{r}_d(\lambda)\d\lambda\, \frac{\varphi(x_1)\varphi(x_2)}{|x-x_1||x_2-y|} \d x_1\d x_2. 
			\end{equation}
			
			For $\Pi_0(x,y)$, we  decompose the kernel as 
			\begin{equation*}
				\Pi_0(x,y):= \sum_{j=1}^3 \Pi_{0,j}(x,y),
			\end{equation*}
			where each $\Pi_{0,j}$ is defined as the restriction of $\Pi_0$ to a distinct region:
			\begin{itemize}
				\item  $\Pi_{0,1}$ corresponds to the region $ |\rho_-|\le |\rho_+|\le 1$ in the integrand  \eqref{eq-Pi-0-0703};
				\item  $\Pi_{0,2}$ corresponds to the region $|\rho_-|\le 1\le |\rho$ in the integrand  \eqref{eq-Pi-0-0703};
				\item  $\Pi_{0,3}$ corresponds to the region $1<|\rho_-|\le |\rho_+|$ in the integrand  \eqref{eq-Pi-0-0703}.
			\end{itemize}
			
			In the region  $|\rho_-|\le |\rho_+|\le 1$, we immediately obtain that 
			$$ \left|\Pi_{0,1}(x,y) \right|\les \sum_{\pm}\int_{\mathbb{R}^{6}}  \frac{|\varphi(x_1)||\varphi(x_2)|}{|x-x_1||x_2-y|\langle \rho_\pm\rangle^3} \d x_1\d x_2. $$
			Since $\varphi \in L^1(\mathbb{R}^3)$ by \eqref{eq-decay condition}, applying Fubini's theorem yields
			\begin{equation}\label{eq-qdmi-7-4-1}
				\begin{split}
					\left\|\Pi_{0,1}(x,y)\right\|_{L^1_x} 
					&\lesssim  \sum_{\pm}\int_{\mathbb{R}^3} \int_{\mathbb{R}^6} \frac{|\varphi(x_1)||\varphi(x_2)|}{|x-x_1||x_2-y|\langle \rho_\pm\rangle^3} \d x_1 \d x_2 \d x \\
					&\lesssim  \sum_{\pm}\int_{\mathbb{R}^6} \frac{|\varphi(x_2)|}{|x_1||x_2-y|\langle |x_1|\pm|x_2-y|\rangle^3} \d x_1 \d x_2 \\
					&\lesssim  \sum_{\pm}\int_0^\infty \int_{\mathbb{R}^3} \frac{r|\varphi(x_2)|}{|x_2-y|\langle r\pm|x_2-y|\rangle^3} \d r \d x_2 \\
					&\lesssim \int_{\mathbb{R}^3} \frac{\langle x_2-y\rangle|\varphi(x_2)|}{|x_2-y|} \d x_2 \\
					&\lesssim 1,
				\end{split}
			\end{equation}
			By symmetry, we also have 
			$\sup_{x}	\left\|\Pi_{0,1}(x,y)\right\|_{L^1_y} \lesssim 1.$ Thus $\Pi_{0,1}(x,y)$ is admissible. 
			
			In the second region where $|\rho_-| \leq 1 \leq |\rho_+|$, the integral involving $\rho_-$ can be treated using the same method as in the case $|\rho_-| \leq |\rho_+| \leq 1$.
			Thus, we apply integration by parts twice in the $\lambda$ integral (with respect to $e^{i\lambda\rho_{+}}$ ) and obtain
			\begin{align*}
				\left|\Pi_{0,2}(x,y) \right|\les& \int_{\mathbb{R}^{6}}  \frac{|\varphi(x_1)||\varphi(x_2)|}{|x-x_1||x_2-y| \langle \rho_+\rangle^{2}} +\frac{|\varphi(x_1)||\varphi(x_2)|}{|x-x_1||x_2-y|\langle \rho_{-}\rangle^3}  \d x_1\d x_2 \\
				\les& \sum_{\pm}\int_{\mathbb{R}^{6}}  \frac{|\varphi(x_1)||\varphi(x_2)|}{|x-x_1||x_2-y|\langle \rho_\pm\rangle^3}  \d x_1\d x_2,
			\end{align*}
			where we have used the inequality
			$$ \langle\rho_+\rangle^{-2} \le \langle\rho_+\rangle^{-2}\langle\rho_-\rangle^{-1}\les \langle\rho_+\rangle^{-3}+\langle\rho_-\rangle^{-3}, $$
			when $|\rho_-|\le 1\le |\rho_+|$. 
			Thus, in view of \eqref{eq-qdmi-7-4-1},  $\Pi_{0,2}(x,y)$ is also admissible.
			
			In the third region where $1<|\rho_-|\le |\rho_+|$, we perform integration by parts twice in the $\lambda$ integral (with respect to $e^{i\lambda\rho_{\pm}}$ ) to get 
			$$\int_0^\infty \left( e^{ i\lambda\rho_+}-e^{ i\lambda\rho_-}\right) \lambda G_+^{\alpha}(\lambda){\chi}(\lambda)\d\lambda=\rho_+^{-2}-\rho_-^{-2}
			-\int_0^\infty \left( \frac{e^{ i\lambda\rho_+}}{\rho_+^2}-\frac{e^{ i\lambda\rho_-}}{\rho_-^2}\right)\big(2\chi{'}(\lambda)+\lambda \chi{''}(\lambda)\big)\d\lambda.$$
			Since the support of $2\chi{'}(\lambda)+\lambda \chi{''}(\lambda)$ is away from zero,   performing one more integration by parts yields 
			\begin{align*}
				\left|\Pi_{0,3}(x,y) \right|\les& \int_{\mathbb{R}^{6}}  \frac{|\varphi(x_1)||\varphi(x_2)|}{|\rho_+|^2 |\rho_-|^2} \d x_1\d x_2+\sum_{\pm}\int_{\mathbb{R}^{6}}  \frac{|\varphi(x_1)||\varphi(x_2)|}{|x-x_1||x_2-y|\langle \rho_\pm\rangle^3} \d x_1\d x_2 \\
				\les& \sum_{\pm} \int_{\mathbb{R}^{6}}  \frac{|\varphi(x_1)||\varphi(x_2)|}{\langle\rho_\pm\rangle^4} \d x_1\d x_2
				+\sum_{\pm}\int_{\mathbb{R}^{6}}  \frac{|\varphi(x_1)||\varphi(x_2)|}{|x-x_1||x_2-y|\langle \rho_\pm\rangle^3} \d x_1\d x_2.
			\end{align*}
			This implies that $\Pi_{0,3}(x,y)$ is admissible.

			For $\Pi_1^\pm(x,y)$, we first analyze the inner $\lambda$-integral  in \eqref{eq-Pi-0-0703-2}, that is
			\begin{align}\label{eq:lambda-int d=3}
				\Lambda =\int_0^\infty e^{ i\lambda\rho_\pm}  \lambda \widetilde{r}_d(\lambda)\chi(\lambda) \d\lambda.
			\end{align} 
			By noting \eqref{eq-r-d odd-1} and using Lemma \ref{oscillatory estimates} with $b=\frac 32$, we obtain
			$$
			\left|\Lambda\right| \lesssim \langle \rho_\pm\rangle^{-\frac52}.
			$$
			Applying the argument from \eqref{eq-qdmi-7-4-1}, it follows that  $\Pi_1^\pm(x,y)$ is admissible.
			
			Consequently,  $\mathcal{W}_-^l(H_\alpha,H_0)\in \mathcal{B}(L^p(\R^3))$ for all $1\le p\le \infty$.

			
			\textbf{Step 2: The $L^\infty$-boundedness for $d\ge 4$.}
			
			Using the representation \eqref{eq:expansion for free-4},   $\mathcal{W}_-^l(H_\alpha,H_0)$ comprises the following four terms:
			$$
			\mathcal{W}_-^l(H_\alpha,H_0)=\sum_{j=0}^4I_j,
			$$
			where the integral kernel of each $I_j$ is defined as
			\begin{align*}
				I_1(x,y)=\frac{c_d}{\pi i}\int_0^\infty&\lambda G_+^{\alpha}(\lambda){\chi}(\lambda) \int_{\mathbb{R}^{2d}}|x-x_1|^{2-d}(R_0^+-R_0^-)(\lambda^2;x_2,y)\varphi(x_1)\varphi(x_2)\d x_1\d x_2\d\lambda, \\
				I_2(x,y)=\frac{c_d}{\pi i}\int_0^\infty&\lambda G_+^{\alpha}(\lambda){\chi}(\lambda) \int_{\mathbb{R}^{2d}}|x-x_1|^{2-d}\widetilde{\eta}(\lambda|x-x_1|)\, (R_0^+-R_0^-)(\lambda^2;x_2,y)\varphi(x_1)\varphi(x_2)\d x_1\d x_2\d\lambda, \\
				I_3(x,y) =\frac{1}{\pi i}\int_0^\infty&\lambda G_+^{\alpha}(\lambda){\chi}(\lambda) \\
				\times& \int_{\mathbb{R}^{2d}} r^{+}_d(\lambda,|x-x_1|)\eta(\lambda|x-x_1|)\, (R_0^+-R_0^-)(\lambda^2;x_2,y)\varphi(x_1)\varphi(x_2)\d x_1\d x_2 \d\lambda,
			\end{align*}
			and 
			\begin{align*}
				I_4(x,y)=\frac{1}{\pi i}\int_0^\infty &\lambda^{\frac{d-1}{2}} G_+^{\alpha}(\lambda){\chi}(\lambda)  \\
				\times& \int_{\mathbb{R}^{2d}} e^{ i\lambda|x-x_1|}\frac{\Phi^+(\lambda|x-x_1|)}{|x-x_1|^{\frac{d-1}{2}}}\widetilde{\eta}(\lambda|x-x_1|)\, (R_0^+-R_0^-)(\lambda^2;x_2,y)\varphi(x_1)\varphi(x_2)\d x_1\d x_2 \d\lambda.
			\end{align*}
			
			Observe that $\frac{\lambda}{\pi i}(R_0^+-R_0^-)(\lambda^2)=dE_{\lambda}(\sqrt{-\Delta})$ is the spectral measure of the $\sqrt{-\Delta}$. By spectral theorem, we can
			rewrite  $I_1$ in the form
			$$ I_1 f(x)=c_d \int_{\mathbb{R}^{d}}|x-x_1|^{2-d}\varphi(x_1) \d x_1 \cdot \int_{\mathbb{R}^{d}} \varphi(y) \left(G_+^{\alpha}(\sqrt{-\Delta})\chi(\sqrt{-\Delta})f\right)(y) \d y. $$
			By \eqref{eq-G-d>3}, the symbol of $G_+^{\alpha}(\sqrt{-\Delta})\chi(\sqrt{-\Delta})$ is 
			$$ \frac{\alpha}{1+\alpha a_0}\chi(|\xi|)+\widetilde{r}_d(|\xi|)\chi(|\xi|).$$
			By \eqref{eq-r-d odd-1}, $\widetilde{r}_d(|\xi|)\chi(|\xi|)$ satisfies the point-wise estimate \eqref{eq:condition for FM}, thus $\widetilde{r}_d(\sqrt{-\Delta})\chi(\sqrt{-\Delta})\in \mathcal{B}(L^\infty)$ by Lemma \ref{lem: Fourier Multiplier by Bernstein}. On the other hand, it is clear that $\chi(\sqrt{-\Delta})\in\mathcal{B}(L^\infty)$. Thus it follows that 
			$$G_+^{\alpha}(\sqrt{-\Delta})\chi(\sqrt{-\Delta})\in \mathcal{B}(L^\infty).$$
			Meanwhile, by the decay assumption \eqref{eq-decay condition} holds, we derive from \eqref{eq-estimates-GV-1-3-24} that 
			$$\sup_{x\in \R^d}\int_{\mathbb{R}^{d}}|x-x_1|^{2-d}\varphi(x_1) \d x_1 \lesssim 1.$$
			Therefore $I_1 \in \mathcal{B}(L^\infty)$.
			
			Similarly,  the operators $I_2$ and $I_3$ can be written as 
			$$ I_2f(x)=c_d \int_{\mathbb{R}^{d}}|x-x_1|^{2-d}\d x_1    \,\int_{\mathbb{R}^{d}}\varphi(y)  \left(\widetilde{\eta}(|x-x_1|\sqrt{-\Delta})\varphi(x_1) G_+^{\alpha}(\sqrt{-\Delta})\chi(\sqrt{-\Delta})f\right)(y) \d y $$
			and 
			$$ I_3f(x)=c_d \int_{\mathbb{R}^{d}}\varphi(x_1) \d x_1    \,\int_{\mathbb{R}^{d}}\varphi(y)  \left(r^{+}_d(\sqrt{-\Delta},|x-x_1|)\eta(|x-x_1|\sqrt{-\Delta})G_+^{\alpha}(\sqrt{-\Delta})\chi(\sqrt{-\Delta})f\right)(y) \d y $$
			respectively.
			By \eqref{eq-G-d>3},  \eqref{eq:est for rd with cut}, and the fact that $\varphi\in L^1$,  it follows that
			$$ \sup_{x\in \R^d}\left| \partial_\xi^\gamma \left(\int_{\mathbb{R}^{d}}|x-x_1|^{2-d}\widetilde{\eta}(|\xi||x-x_1|)\varphi(x_1) \d x_1    \, G_+^{\alpha}(|\xi|)\chi(|\xi|) \right) \right| \lesssim |\xi|^{d-2-|\gamma|},    \quad \xi \in \mathbb{R}^d \setminus \{0\},
			$$
			and
			$$ \sup_{x\in \R^d}\left| \partial_\xi^\gamma \left(\int_{\mathbb{R}^{d}}r^{\pm}_d(|\xi|,|x-x_1|)\eta( |\xi||x-x_1|)\varphi(x_1) \d x_1    \, G_+^{\alpha}(|\xi|)\chi(|\xi|) \right) \right| \lesssim |\xi|^{1-|\gamma|},    \quad \xi \in \mathbb{R}^d \setminus \{0\},$$
			hold for all $ |\gamma| \leq [\frac{d+3}{2}]$.
			Thus by Lemma \ref{lem: Fourier Multiplier by Bernstein},  $I_2, I_3 \in \mathcal{B}(L^\infty)$.
			
			For the term $I_4(x,y)$, we use the expansion in \eqref{eq:expansion for free-5} to rewrite it as a difference of
			\begin{align}\label{eq: expression of I4}
				\frac{1}{\pi i} \int_{\mathbb{R}^{2d}}\int_0^\infty e^{ i\lambda\rho_\pm}   G_+^{\alpha}(\lambda) \chi(\lambda)\lambda^{d-2} \Psi^\pm(\lambda,x,x_1,x_2,y)   \d\lambda \, \frac{\varphi(x_1)\varphi(x_2)}{|x-x_1|^{\frac{d-1}{2}}|x_2-y|^{\frac{d-1}{2}}} \d x_1\d x_2,
			\end{align}
			where  $\rho_\pm=|x-x_1|\pm|x_2-y|$, and
			$$\Psi^\pm(\lambda,x,x_1,x_2,y):=\Phi^+(\lambda|x-x_1|)J^{\pm}(\lambda|x_2-y|)\widetilde{\eta}(\lambda|x-x_1|).$$
			Similar to  estimating \eqref{eq-Pi-0-0703-2} , we study the inner $\lambda$-integral in \eqref{eq: expression of I4}, that is 
			\begin{align}\label{eq:lambda-int}
				\tilde{\Lambda} &=\int_0^\infty e^{ i\lambda\rho_\pm} \lambda^{d-2}   G_+^{\alpha}(\lambda) \chi(\lambda) \Psi^\pm(\lambda,x,x_1,x_2,y)  
			\end{align} 
			By \eqref{eq-resolvent-large-argument-2}, we have for $k=0,1,2,\ldots,\left[\frac{d+3}{2}\right]$, 
			$$ \sup_{x,x_1,x_2,y}\Big|\partial_{\lambda}^k\big[ G_+^{\alpha}(\lambda) \chi(\lambda) \Psi^\pm(\lambda,x,x_1,x_2,y)\big] \Big|\lesssim \lambda^{-k}, \quad 0<\lambda<\lambda_0. $$
			Note that $d-2-[\frac{d+3}{2}]>-\frac 54$ when $d\ge 4$. By using Lemma \ref{oscillatory estimates} with $[\frac{d+3}{2}]-\frac 54$, it follows that 
			\begin{equation}\label{eq-Lambda-0705}
				\tilde{\Lambda}\lesssim  \left\langle \rho_\pm\right\rangle^{\frac14-[\frac{d+3}{2}]}.
			\end{equation}
			Thus we have
			\begin{equation}\label{eq-I-4-0705}
				\begin{split}
					\sup_{x\in \R^d}\big\|I_{4}(x,y) \big\|_{L^1_y} \lesssim& 
					\sup_{x\in \R^d}\int_{\mathbb{R}^{d}} \int_{\mathbb{R}^{2d}} \big\langle \rho_\pm\big\rangle^{\frac14-\left[\frac{d+3}{2}\right]}\frac{|\varphi(x_1)||\varphi(x_2)|}{|x-x_1|^{\frac{d-1}{2}}|x_2-y|^{\frac{d-1}{2}}} \d x_1\d x_2 \d y \\
					\lesssim & \sup_{x\in \R^d}\int_{\mathbb{R}^{d}}\int_{0}^\infty \big\langle r\pm|x-x_1|\big\rangle^{\frac14-\left[\frac{d+3}{2}\right]}\frac{ |\varphi(x_1)|}{|x-x_1|^{\frac{d-1}{2}}} r^{\frac{d-1}{2}}\d r\d x_1 \\
					\lesssim & \sup_{x\in \R^d}\int_{\mathbb{R}^{d}} \frac{ \langle x-x_1\rangle^{\frac{d-1}{2}}|\varphi(x_1)|}{|x-x_1|^{\frac{d-1}{2}}} \d x_1 
					\lesssim 1,
				\end{split}
			\end{equation} 
			where in the first inequality, we used \eqref{eq-Lambda-0705};
			in the second inequality, we first applied change of variables $x_2-y=x_2'$, noting that $\int_{\R^d}{|\varphi(x_2'+y)|\d y }<M<\infty$ (independent of $x_2'$), employed polar coordinates by setting $|x_2'|=r>0$;
			while in the third inequality, we used the fact that $\frac{d-1}{2}+\frac14-\left[\frac{d+3}{2}\right]<-1$. Consequently, this yields that $I_{4} \in \mathcal{B}(L^\infty)$. 
			
			Collecting these results for $I_1$ through $I_4$, we conclude that 
			$\mathcal{W}_-^l(H_\alpha,H_0)\in \mathcal{B}(L^\infty)$.
			
			\textbf{Step 3: The $L^1$-boundedness for $d\ge 4$.}
			
			Using the expressions \eqref{eq-free kernel-1} and \eqref{eq:expansion for free-5}, we write 
			$$
			\mathcal{W}_-^l(H_\alpha,H_0)=J_0^{+}+J_1^{+}-J_0^{-}-J_1^{-},
			$$
			where the integral  kernel of $J_s^{\pm}$ ($s=0, 1$) is given by 
			\begin{align*}\label{eq: expression for L^1 bound}
				J_s^{\pm}(x,y):=\int_{\mathbb{R}^{2d}}\int_0^\infty e^{ i\lambda\rho_\pm}   G_+^{\alpha}(\lambda) \chi(\lambda)\lambda^{d-2-\frac{d-3}{2}s} \, \Phi_s^\pm(\lambda,x,x_1,x_2,y)   \d\lambda \, \frac{\varphi(x_1)\varphi(x_2)}{|x-x_1|^{\frac{d-1}{2}+\frac{d-3}{2}s}|x_2-y|^{\frac{d-1}{2}}} \d x_1\d x_2
			\end{align*}
			with
			$$ \Phi_s^\pm(\lambda,x,x_1,x_2,y)=w_s^+(\lambda|x-x_1|)J^{\pm}(\lambda|x_2-y|).$$
			Following the method for estimating $I_4$ above, we derive that for $s=0, 1$,
			\begin{align*}
				\left| J_s^{\pm}(x,y)\right|\lesssim \int_{\mathbb{R}^{2d}} \big\langle \rho_\pm\big\rangle^{\frac14+\frac{d-3}{2}s-\left[\frac{d+3}{2}\right]}\frac{|\varphi(x_1)||\varphi(x_2)|}{|x-x_1|^{\frac{d-1}{2}+\frac{d-3}{2}s}|x_2-y|^{\frac{d-1}{2}}} \d x_1\d x_2,  
			\end{align*}
			which, in turn, implies that
			$$
			\sup_{y\in\R^d}\int_{\R^d}|J_s^{\pm}(x,y)|\d x<\infty,\qquad s=0,1.
			$$
			Consequently, we have $\mathcal{W}_-^l(H_\alpha,H_0)\in \mathcal{B}(L^1)$.
			
			Therefore  the proof of Proposition \ref{pro-low energy-d>3} is complete. 
		\end{proof}
		
		\subsection{The high energy contribution}
		In this subsection, we investigate the $L^p$-boundedness of the high-energy component $\mathcal{W}_-^h(H_\alpha,H_0)$.  We begin with the following estimate for $G_+^{\alpha}(\lambda)$ when $\lambda>\lambda_0$.
		
		\begin{lemma}\label{lem:est for high G}
			Let $d\ge1$ and assume condition \eqref{eq-decay condition} holds. For $\lambda>\lambda_0$ with any fixed $\lambda_0>0$,   $G_+^{\alpha}(\lambda)$ satisfies
			\begin{equation}\label{eq:est for G high-1}
				\left| \frac{d^k}{\d\lambda^k}G_+^{\alpha}(\lambda)\right| \lesssim 
				\begin{cases}
					1,& \quad k=0,\\
					\lambda^{-1}, &\quad 1\le k\le [\frac{d+3}{2}]. 
				\end{cases}
			\end{equation}
		\end{lemma}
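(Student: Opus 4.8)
Recall that $G_+^{\alpha}(\lambda)=\alpha/(1+\alpha F^{+}(\lambda^2))$. The spectral assumption \eqref{eq-spectral condition} guarantees $|1+\alpha F^{+}(\lambda^2)|\ge c_0>0$ for all $\lambda>0$, so the estimate for $k=0$ is immediate: $|G_+^{\alpha}(\lambda)|\le \alpha/c_0\lesssim 1$ uniformly in $\lambda$. For $k\ge 1$ the strategy is to differentiate the quotient and reduce everything to decay estimates for the derivatives of $F^{+}(\lambda^2)$. Indeed, by the chain rule and induction (or Fa\`a di Bruno), $\frac{d^k}{\d\lambda^k}G_+^{\alpha}(\lambda)$ is a finite sum of terms of the shape
\[
c_{k,\ell}\,\frac{\alpha^{\ell+1}}{(1+\alpha F^{+}(\lambda^2))^{\ell+1}}\prod_{m=1}^{\ell}\frac{d^{j_m}}{\d\lambda^{j_m}}F^{+}(\lambda^2),\qquad \sum_{m=1}^\ell j_m=k,\ j_m\ge 1.
\]
Since each denominator factor is bounded below by $c_0^{\ell+1}>0$, it suffices to prove that for $\lambda>\lambda_0$ and $1\le j\le [\frac{d+3}{2}]$ one has
\[
\Bigl|\tfrac{d^{j}}{\d\lambda^{j}}F^{+}(\lambda^2)\Bigr|\lesssim \lambda^{-1},
\]
because then the product of $\ell$ such factors is $\lesssim \lambda^{-\ell}\le \lambda^{-1}$ (as $\ell\ge 1$ when $k\ge 1$).

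**The key step: decay of derivatives of $F^{+}(\lambda^2)$ at high energy.** Write $F^{+}(\lambda^2)=\langle R_0^{+}(\lambda^2)\varphi,\varphi\rangle=\iint R_0^{+}(\lambda^2;x,y)\varphi(x)\varphi(y)\,\d x\,\d y$. Split the kernel with the cutoff $\eta(\lambda|x-y|)$ from \eqref{eq-cut-off function eta}. For $d\ge 3$ I would use the decomposition \eqref{eq-free kernel-1}: writing $\rho=|x-y|$, the kernel is a sum of terms $\lambda^{\frac{d-3}{2}}e^{i\lambda\rho}w_0^{+}(\lambda\rho)\rho^{-\frac{d-1}{2}}$ and $e^{i\lambda\rho}w_1^{+}(\lambda\rho)\rho^{2-d}$, and one differentiates under the integral. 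On the region $\lambda\rho\lesssim 1$ (which for $\lambda>\lambda_0$ forces $\rho\lesssim 1/\lambda_0$), the small-argument expansions \eqref{eq-epansion-small-argument-0}--\eqref{eq-epansion-small-argument} and the decay \eqref{eq-decay condition} give that this piece is smooth in $\lambda$ with derivatives bounded by $\lambda^{\frac{d-3}{2}}$ times a convergent integral in $\rho$; but more useful is to integrate by parts once in $\lambda$ using $e^{i\lambda\rho}$ on the region $\lambda\rho\gtrsim 1$, exploiting \eqref{eq-w estiamte-0702} and the $\langle x\rangle^{-\delta}$, $\langle y\rangle^{-\delta}$ decay with $\delta>d+2$ to gain the factor $\rho^{-1}$ and hence, after integrating in $\rho$, a factor $\lambda^{-1}$. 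For $d=2$ one uses \eqref{eq-free kernel-3} and \eqref{eq-w estiamte} in the same way, and for $d=1$ one uses the explicit kernel \eqref{eq2.1}: $F^{+}(\lambda^2)=\frac{i}{2\lambda}\iint e^{i\lambda|x-y|}\varphi(x)\varphi(y)\,\d x\,\d y$, whose $\lambda$-derivatives are manifestly $O(\lambda^{-1})$ for $\lambda>\lambda_0$ because the explicit $\lambda^{-1}$ prefactor and repeated $\lambda$-differentiation combined with integration by parts against $e^{i\lambda|x-y|}$ (legitimate since $\varphi$ has enough decay) never produce a growing factor.

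**The main obstacle.** The only genuinely delicate point is tracking the $\lambda$-dependence uniformly across the two regimes $\lambda\rho\lesssim 1$ and $\lambda\rho\gtrsim 1$ so as to land exactly on $\lambda^{-1}$ and not something weaker like $O(1)$: the bound $k=0$ is trivially $O(1)$, and the content of \eqref{eq:est for G high-1} is the \emph{gain} of $\lambda^{-1}$ for every $k\ge 1$. This gain comes purely from a single integration by parts in $\lambda$ (each $\lambda$-derivative that hits a Hankel-type factor $w(\lambda\rho)$ produces a $\rho$, which after one integration by parts against $e^{\pm i\lambda\rho}$ turns into $\lambda^{-1}$; each $\lambda$-derivative hitting the prefactor $\lambda^{\frac{d-3}{2}}$ is harmless since it only lowers the power). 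The needed number of derivatives $k\le [\frac{d+3}{2}]$ is exactly what the decay hypothesis $\delta>d+2$ and the smoothness $\varphi\in C^{[d/2]}$ (for $d\ge 2$) are calibrated to allow: one checks that all the integrals in $x,y$ (equivalently in $\rho$ after Lemma \ref{lemma-GV}) converge with room to spare. I would organize the proof as: (1) reduce $G_+^\alpha$ to $F^+$ via the quotient rule and \eqref{eq-spectral condition}; (2) prove $|\partial_\lambda^j F^{+}(\lambda^2)|\lesssim\lambda^{-1}$ for $1\le j\le[\frac{d+3}{2}]$, $\lambda>\lambda_0$, by the kernel splitting and one integration by parts described above, handling $d=1$, $d=2$, and $d\ge3$ with the respective representations \eqref{eq2.1}, \eqref{eq-free kernel-3}, \eqref{eq-free kernel-1}; (3) assemble.
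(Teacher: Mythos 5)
Your Step (1) — reducing \eqref{eq:est for G high-1} via Fa\`a di Bruno and the lower bound \eqref{eq-spectral condition} to the single estimate $|\partial_\lambda^{j}F^{+}(\lambda^2)|\lesssim\lambda^{-1}$ for $1\le j\le[\frac{d+3}{2}]$, $\lambda>\lambda_0$ — is exactly what the lemma needs and is implicit in the paper. But your Step (2) is where the paper and you part ways, and your version has a genuine gap. The paper gets the key estimate in one line from the high-energy limiting absorption principle \eqref{eq: lim absor prin}, $\|\partial_\lambda^{k}R_0^{\pm}(\lambda^2)\|_{L^2_\sigma\to L^2_{-\sigma}}\lesssim\lambda^{-1}$ for $\sigma>\frac12+k$, paired with $\varphi\in L^2_\sigma$; this uses only the decay \eqref{eq-decay condition}, matching the hypothesis of the lemma. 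Your replacement by a direct kernel computation runs into two problems. First, ``integration by parts in $\lambda$'' is not available: $F^{+}(\lambda^2)=\iint R_0^{+}(\lambda^2;x,y)\varphi(x)\varphi(y)\,\d x\,\d y$ contains no $\lambda$-integral, so the oscillation $e^{i\lambda|x-y|}$ can only be exploited by integrating by parts in the spatial (radial) variable, and every such integration by parts puts a derivative on (the autocorrelation of) $\varphi$ — i.e.\ it consumes the smoothness \eqref{eq-smoothness condition}, which the lemma does not assume.

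Second, and more seriously, your power counting fails for $d\ge4$. The worst term in $\partial_\lambda^{j}F^{+}(\lambda^2)$ is the one where all $\lambda$-derivatives fall on $e^{i\lambda\rho}$ in the first piece of \eqref{eq-free kernel-1}: it carries the prefactor $\lambda^{\frac{d-3}{2}}$ together with $\rho^{j}w_0^{+}(\lambda\rho)\rho^{-\frac{d-1}{2}}$, and the crude bound on the region $\lambda\rho\gtrsim1$ is $O(\lambda^{\frac{d-3}{2}})$, not $O(1)$. A \emph{single} spatial integration by parts gains only one factor $\lambda^{-1}$, leaving $\lambda^{\frac{d-5}{2}}$ — still growing for $d\ge6$ and merely bounded for $d=4,5$ — whereas the target is $\lambda^{-1}$. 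To close your argument you would need roughly $\frac{d-1}{2}$ (i.e.\ $[\frac d2]$) integrations by parts, exactly the count the paper uses for the high-energy wave operator in Proposition \ref{pro-high energy-d>3}, and this exhausts the assumed regularity $\varphi\in C^{[d/2]}$; your ``main obstacle'' paragraph, which claims the gain ``comes purely from a single integration by parts,'' miscounts this (it is correct only for $d=1,2,3$, where the prefactor is $\lambda^{0}$ or better). So either redo the bookkeeping with $[\frac d2]$ spatial integrations by parts under the full Condition $H_\alpha$(i), or, as the paper does, simply invoke the weighted-$L^2$ limiting absorption principle, which needs no smoothness at all.
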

		\begin{proof}
			The well-known limiting absorption principle states that for any $d\ge 1$ and $\lambda>\lambda_0$,
			\begin{equation}\label{eq: lim absor prin}
				\Big\|\frac{d^k}{d\lambda^k}R_0^{\pm}(\lambda^2)\Big\|_{L^2_\sigma(\mathbb{R}^d) \to L^2_{-\sigma}(\mathbb{R}^d)} \lesssim_{\lambda_0, k} \lambda^{-1} 
			\end{equation}
			holds for all $\lambda> \lambda_0$ and $\sigma > \frac{1}{2} + k$.
			Combining this with the definition of $G_+^{\alpha}(\lambda)$ and the decay assumption on $\varphi$ (\eqref{eq-decay condition}) yields the estimate \eqref{eq:est for G high-1}.
		\end{proof}

		\begin{proposition}\label{pro-high energy-d>3}
			Let $d\ge3$ and assume that $\varphi$ satisfies \textbf{Condition} $H_\alpha.$ For any fixed $\lambda_0>0$, the high-energy component $\mathcal{W}_-^h(H_\alpha,H_0)$ is bounded on $L^p(\mathbb{R}^d)$ for all $1\le p\le\infty.$ 
		\end{proposition}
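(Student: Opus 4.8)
The plan is to analyze the kernel of $\mathcal{W}_-^h(H_\alpha,H_0)$ directly, using the large-argument representation \eqref{eq-resolvent-large-argument-1} together with the difference formula \eqref{eq:expansion for free-5}. Writing out the kernel from \eqref{eq-high energy wave operator-2}, the $x_1$- and $x_2$-integrals produce factors $e^{\pm i\lambda|x-x_1|}$ and $e^{\pm i\lambda|x_2-y|}$ (from $R_0^+$ and from $R_0^+-R_0^-$ respectively), so the inner $\lambda$-integral becomes an oscillatory integral with phase $\lambda\rho_\pm$, where $\rho_\pm=|x-x_1|\pm|x_2-y|$, amplitude $\lambda^{d-2}G_+^\alpha(\lambda)\widetilde\chi(\lambda)$ times smooth, uniformly-controlled profiles $\Phi^+$, $J^\pm$ coming from \eqref{eq-resolvent-large-argument-1}–\eqref{eq-resolvent-large-argument-2}. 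The key analytic input is Lemma \ref{lem:est for high G}: on $\lambda>\lambda_0$ the symbol $G_+^\alpha(\lambda)$ is bounded with each $\lambda$-derivative (up to order $[\tfrac{d+3}{2}]$) gaining a factor $\lambda^{-1}$. The difference with the low-energy analysis is that here the amplitude grows like $\lambda^{d-2}$ at infinity, so plain stationary-phase decay in $\rho_\pm$ is not available; instead one must integrate by parts in $\lambda$ and trade the large-$\lambda$ growth against the decay of $\varphi$ and its derivatives in the $x_1$-variable.

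First I would split the $\lambda$-integral with the same dyadic localization $\phi_0(\lambda\langle\rho_\pm\rangle)$, $\phi_1(\lambda\langle\rho_\pm\rangle)$ used in the proof of Lemma \ref{oscillatory estimates}, but now also insert a partition in the $x_1$-variable (or rather in $|x-x_1|$): on the region $\lambda|x-x_1|\lesssim1$ one uses the small-argument expansion and the $\lambda^{2-d}$-type behavior is harmless since $\lambda>\lambda_0$ is bounded below, while on $\lambda|x-x_1|\gtrsim1$ one uses \eqref{eq-resolvent-large-argument-1} and integrates by parts repeatedly in $\lambda$. Each integration by parts either hits $e^{i\lambda\rho_\pm}$ (gaining $\rho_\pm^{-1}$), or hits $\widetilde\chi'(\lambda)$ (compactly supported away from $0$ and $\infty$, hence a bounded and rapidly-controlled contribution), or hits the smooth profile/symbol (gaining $\lambda^{-1}$ by Lemma \ref{lem:est for high G} and \eqref{eq-resolvent-large-argument-2}). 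Since the amplitude is $O(\lambda^{d-2})$ and there are $[\tfrac{d+3}{2}]>\tfrac{d-2}{2}+1$ available derivatives, after enough integrations by parts the resulting $\lambda$-integral over $\lambda\gtrsim\langle\rho_\pm\rangle^{-1}$ converges and one obtains a bound $|\text{inner integral}|\lesssim\langle\rho_\pm\rangle^{-M}$ for any prescribed $M$ (by iterating the gain in $\rho_\pm^{-1}$), at the cost of finitely many $\lambda$-derivatives landing on the $x_1$-localization and on $|x-x_1|^{-(d-1)/2}$, which are controlled using \eqref{eq-smoothness condition} and $|\partial^\beta_{x_1}|x-x_1|^{-(d-1)/2}|\lesssim|x-x_1|^{-(d-1)/2-|\beta|}$.

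Once the kernel is dominated pointwise by
\[
\bigl|\mathcal{W}_-^h(x,y)\bigr|\lesssim\sum_{\pm}\int_{\R^{2d}}\langle\rho_\pm\rangle^{-M}\,\frac{|\widetilde\varphi(x_1)|\,|\varphi(x_2)|}{|x-x_1|^{(d-1)/2}\,|x_2-y|^{(d-1)/2}}\,\d x_1\,\d x_2,
\]
where $\widetilde\varphi$ is $\varphi$ or one of its derivatives and $M$ is chosen large (say $M>d+1$), I would verify the admissibility condition \eqref{eq-admissible condition} by exactly the Fubini-plus-polar-coordinates computation of \eqref{eq-qdmi-7-4-1} and \eqref{eq-I-4-0705}: integrate in $x$ (using Lemma \ref{lemma-GV} to handle $\int|x-x_1|^{-(d-1)/2}\langle x-x_1\rangle^{-\delta}\,\d x_1\lesssim1$, valid since $\tfrac{d-1}{2}<d$ and $\delta>d+2$), then pass to polar coordinates $r=|x_2-y|$ and integrate $\int_0^\infty r^{(d-1)/2}\langle r\pm|x-x_1|\rangle^{-M}\,\d r\lesssim\langle x-x_1\rangle^{(d-1)/2}$ when $M>(d-1)/2+1$, and finally absorb $\langle x-x_1\rangle^{(d-1)/2}/|x-x_1|^{(d-1)/2}$ against the decay of $\widetilde\varphi$. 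By symmetry the same bound holds with the roles of $x$ and $y$ exchanged, so $\mathcal{W}_-^h$ is admissible and hence bounded on $L^p(\R^d)$ for all $1\le p\le\infty$.

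The main obstacle I anticipate is bookkeeping the interplay between the three competing scales when integrating by parts in $\lambda$: the large-$\lambda$ growth $\lambda^{d-2}$ of the amplitude, the gain $\langle\rho_\pm\rangle^{-1}$ per boundary term, and the fact that derivatives falling on the $x_1$-dependent profile $\Phi^+(\lambda|x-x_1|)\widetilde\eta(\lambda|x-x_1|)$ must be converted into extra powers of $|x-x_1|$ (which are then harmless only because of the strong decay $\delta>d+2$ in \eqref{eq-decay condition}–\eqref{eq-smoothness condition} and the smoothness $\varphi\in C^{[d/2]}$). Carefully choosing the number of integrations by parts so that the $\lambda$-integral converges while keeping the number of $x_1$-derivatives below $\beta_0=[d/2]$ is where the hypotheses on $\varphi$ are used in an essential way; once the pointwise kernel bound with a large power of $\langle\rho_\pm\rangle^{-1}$ is in hand, the remaining integral estimates are routine applications of Lemma \ref{lemma-GV}.
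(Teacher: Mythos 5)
Your overall framework (reduce to the oscillatory kernels with phase $e^{i\lambda\rho_\pm}$, $\rho_\pm=|x-x_1|\pm|x_2-y|$, bound the inner $\lambda$-integral, then verify admissibility via the Fubini/polar-coordinate computation as in \eqref{eq-qdmi-7-4-1} and \eqref{eq-I-4-0705}) matches the paper, but the central mechanism you propose for the $\lambda$-integral does not close. You plan to tame the $\lambda^{d-2}$ growth of the amplitude by integrating by parts in $\lambda$ alone, each derivative gaining $\lambda^{-1}$ via Lemma \ref{lem:est for high G} and \eqref{eq-resolvent-large-argument-2}. For the resulting integral $\int_{\lambda_0}^{\infty}\lambda^{d-2-k}\,\d\lambda$ to converge you need $k\ge d$ integrations by parts, whereas Lemma \ref{lem:est for high G} controls only $[\tfrac{d+3}{2}]$ derivatives of $G_+^{\alpha}$ (this limitation is forced by the decay $\delta>d+2$ of $\varphi$ through the limiting absorption principle). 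Since $[\tfrac{d+3}{2}]<d$ for every $d\ge4$, your counting criterion ``$[\tfrac{d+3}{2}]>\tfrac{d-2}{2}+1$'' is the wrong threshold, the claimed bound $\langle\rho_\pm\rangle^{-M}$ ``for any prescribed $M$'' is not attainable, and the argument fails in all dimensions $d\ge 4$ (it is only marginally sufficient at $d=3$). Relatedly, your sketch never actually uses the spatial smoothness \eqref{eq-smoothness condition}: $\lambda$-integrations by parts never place derivatives on $\varphi$, so the appearance of ``$\widetilde\varphi$ or one of its derivatives'' in your final kernel bound is unexplained, and the hypothesis $\varphi\in C^{[d/2]}$ would be idle in your proof — a sign that the key step is missing.

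The paper's proof supplies exactly the missing ingredient: before any $\lambda$-integration by parts, it integrates by parts in the spatial variables using the operators $L_{x_1},L_{x_2}$ of \eqref{eq-differential operators}, whose adjoints each produce a factor $\lambda^{-1}$ at the cost of one $x_1$- or $x_2$-derivative falling on $\varphi$ or on the profiles; applying $L_{x_1}$ $n_{s,d}$ times and $L_{x_2}$ $[\tfrac d2]$ times lowers the $\lambda$-power $d-2-\tfrac{d-3}{2}s$ to an exponent $\le-1$. Only then is the $\lambda$-integral split at $\lambda|\rho_\pm|\sim1$: the near region gives a harmless $\langle\log|\rho_\pm|\rangle$, and $[\tfrac{d+3}{2}]$ integrations by parts in $\lambda$ on the far region give $|\rho_\pm|^{-[\frac{d+3}{2}]}$, which is all the decay needed (and all that is available). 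The spatial derivatives of $\varphi$ generated in this process are precisely where \eqref{eq-smoothness condition} with $\beta_0=[\tfrac d2]$ is used. To repair your proposal you would need to incorporate this spatial integration by parts (or an equivalent device converting powers of $\lambda$ into derivatives of $\varphi$); as written, the $\lambda$-only scheme cannot work with the regularity actually available for $G_+^{\alpha}$.
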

		\begin{proof}
			Using \eqref{eq-free kernel-1} and \eqref{eq:expansion for free-5} , the kernel of $\mathcal{W}_-^h(H_\alpha,H_0)$ can be expressed as a combination of four terms
			\begin{align}\label{eq: expression for high dge3}
				\tilde{J}_s^{\pm}(x,y):=\int_{\mathbb{R}^{2d}}\int_0^\infty e^{ i\lambda\rho_\pm}   G_+^{\alpha}(\lambda) \widetilde{\chi}(\lambda)\lambda^{d-2-\frac{d-3}{2}s} \Phi_s^\pm(\lambda,x,x_1,x_2,y)   \d\lambda \, \frac{\varphi(x_1)\varphi(x_2)}{|x-x_1|^{\frac{d-1}{2}+\frac{d-3}{2}s}|x_2-y|^{\frac{d-1}{2}}} \d x_1\d x_2
			\end{align}
			for $s=0,1$, where $\rho_\pm=|x-x_1|\pm|x_2-y|$ and
			$$ \Phi_s^\pm(\lambda,x,x_1,x_2,y)=w_s^+(\lambda|x-x_1|)J^{\pm}(\lambda|x_2-y|).$$
			To establish the admissibility of the kernels $\tilde{J}_s^{\pm}(x,y)$, we take advantage of the smoothness and decay properties
			of $\varphi$ when $d\ge 3$.   These properties  enable  us to perform integration by parts with respect to the spatial variables. More precisely, we define the operators
			\begin{equation}\label{eq-differential operators}
				L_{x_1}:=\frac{1}{i\lambda}\frac{x_1-x}{|x_1-x|}\cdot\nabla_{x_1},\ \ \ \ \ \ L_{x_2}:=\frac{1}{i\lambda}\frac{x_2-y}{|x_2-y|}\cdot\nabla_{x_2},
			\end{equation}
			along with their formal adjoints
			\begin{equation}\label{eq-differential operators-dual}
				L_{x_1}^*:=\frac{i}{\lambda}\nabla_{x_1}\big(\frac{x_1-x}{|x_1-x|}),\ \ \ \ \ \ L_{x_2}^*:=\frac{i}{\lambda}\nabla_{x_2}\big(\frac{x_2-y}{|x_2-y|}\big).
			\end{equation}
			Thus we  have $L_{x_1}e^{\pm i\lambda|x-x_1|}=e^{\pm i\lambda|x-x_1|}$ and $L_{x_2}e^{i\lambda|x_2-y|}=e^{i\lambda|x_2-y|}$. Let 
			$$n_{s,d}=\begin{cases}
				[\frac{d}{2}], &\quad s=0, \\
				1, &\quad s=1.
			\end{cases}$$
			Applying the operator $L_{x_1}$  $n_{s,d}$ times to the phase factors $e^{\pm i\lambda|x-x_1|}$ and  $L_{x_2}$ $n_{0,d}$ times to $e^{i\lambda|x_2-y|}$ in $\tilde{J}_s^{\pm}(x,y)$, we have
			\begin{align*}
				\tilde{J}_s^{\pm}(x,y)=\int_{\mathbb{R}^{2d}}\int_0^\infty& e^{ i\lambda\rho_\pm}   G_+^{\alpha}(\lambda) \widetilde{\chi}(\lambda)\lambda^{d-2-\frac{d-3}{2}s} \\
				&\times \big(L_{x_1}^*\big)^{n_{s,d}}\big(L_{x_2}^*\big)^{n_{0,d}} \Phi_s^\pm(\lambda,x,x_1,x_2,y)    \frac{\varphi(x_1)\varphi(x_2)}{|x-x_1|^{\frac{d-1}{2}+\frac{d-3}{2}s}|x_2-y|^{\frac{d-1}{2}}} \d\lambda \, \d x_1\d x_2,
			\end{align*}
			which can be further written as a linear combination of 
			\begin{equation}\label{eq: high form}
				\int_{\mathbb{R}^{2d}}\int_0^\infty e^{ i\lambda\rho_\pm}   G_+^{\alpha}(\lambda) \widetilde{\chi}(2\lambda)\lambda^{d-2-\frac{d-3}{2}s-n_{s,d}-n_{0,d}} 
				{\Phi}^{L}_s(\lambda,x,x_1,x_2,y)    \frac{\nabla^{\gamma_1}_{x_1}\varphi(x_1)\nabla_{x_2}^{\gamma_2}\varphi(x_2)}{|x-x_1|^{k_1}|x_2-y|^{k_2}} \d\lambda \, \d x_1\d x_2,
			\end{equation}
			where 
			$$0\le |\gamma_1|\le n_{s,d},\ \ \ 0\le |\gamma_2|\le n_{0,d},\ \ \ \frac{d-1}{2}\le k_1,k_2\le d-1.$$ 
			By \eqref{eq-w estiamte} and the properties of Bessel functions $J^{\pm}$, it follows that
			$$\sup_{x,x_1,x_2,y}\left|\partial_{\lambda}^{k} {\Phi}^{L}_s(\lambda,x,x_1,x_2,y) \right|\lesssim \lambda^{-k}, \quad k=0,1,\ldots, \left[\frac{d+3}{2}\right].$$
			we estimate the inner $\lambda$-integral in \eqref{eq: high form} and   decompose the integral into
			\begin{align*}
				\widehat{\Lambda} &:=\int_0^\infty e^{ i\lambda\rho_\pm}   G_+^{\alpha}(\lambda) \widetilde{\chi}(2\lambda)\lambda^{d-2-\frac{d-3}{2}s-n_{s,d}-n_{0,d}} 
				{\Phi}^{L}_s(\lambda,x,x_1,x_2,y)      \d\lambda\nonumber\\
				&=\int_0^\infty e^{ i\lambda\rho_\pm}   G_+^{\alpha}(\lambda) \widetilde{\chi}(2\lambda)\lambda^{d-2-\frac{d-3}{2}s-n_{s,d}-n_{0,d}} 
				{\Phi}^{L}_s(\lambda,x,x_1,x_2,y)   \chi(|\rho_\pm|\lambda)  \d\lambda\nonumber\\
				&+\int_0^\infty e^{ i\lambda\rho_\pm}   G_+^{\alpha}(\lambda) \widetilde{\chi}(2\lambda)\lambda^{d-2-\frac{d-3}{2}s-n_{s,d}-n_{0,d}} 
				{\Phi}^{L}_s(\lambda,x,x_1,x_2,y)   (1-\chi(|\rho_\pm|\lambda))  \d\lambda\nonumber\\
				&:=\widehat{\Lambda}_1+ \widehat{\Lambda}_2
			\end{align*} 
			For the term  $\widehat{\Lambda}_1$, notice that $d-2-\frac{d-3}{2}s-n_{s,d}-n_{0,d} \leq -1$,  one has 
			\begin{align*}
				\left| \widehat{\Lambda}_1\right|
				\lesssim \int_{\frac{\lambda_0}{2}}^{|\rho_\pm|^{-1}} \lambda^{-1}   \d\lambda \lesssim \langle\log|\rho_\pm|\rangle.
			\end{align*}
			For the term $\widehat{\Lambda}_2$, we perform integration by parts $[\frac{d+3}{2}]$ times  to obtain
			\begin{align*}
				\left| \widehat{\Lambda}_2\right|
				&\lesssim\big|\rho_\pm\big|^{-\left[\frac{d+3}{2}\right]}\int_{\frac{\lambda_0}{2}}^{\infty} \lambda^{d-2-\frac{d-3}{2}s-n_{s,d}-n_{0,d}-\left[\frac{d+3}{2}\right]}   \d\lambda \\
				&\lesssim  \big|\rho_\pm\big|^{-\left[\frac{d+3}{2}\right]}.
			\end{align*}
			Substituting the above estimate for $\widehat{\Lambda}$ into \eqref{eq: high form} gives
			\[
			\left|\text{\eqref{eq: high form}}\right|\lesssim \sum_{0\le |\gamma_1|\le n_{s,d}}\sum_{0\le |\gamma_2|\le n_{0,d}}\int_{\mathbb{R}^{2d}} \frac{|\nabla^{\gamma_1}_{x_1}\varphi(x_1)\nabla_{x_2}^{\gamma_2}\varphi(x_2)| \langle\log|\rho_\pm|\rangle}{|x-x_1|^{k_1}|x_2-y|^{k_2}\langle\rho_\pm\rangle^{[\frac{d+3}{2}]}} \d\lambda \d x_1 \d x_2.
			\]
			Thanks to \eqref{eq-smoothness condition}, we can  apply the same arguments  to those establishing \eqref{eq-Lambda-0705} to deduce that, for $s=0,1$,
			$$
			\sup_{y\in\R^d}\int_{\R^d}|\tilde{J}_s^{\pm}(x,y)|\d x<\infty,\quad \mbox{and}\quad\sup_{x\in\R^d}\int_{\R^d}|\tilde{J}_s^{\pm}(x,y)|\d y<\infty.
			$$
			Therefore $\tilde{J}_s^{\pm}(x,y)$ is admissible and the proof is complete.
		\end{proof}

		\section{Proof of Theorem \ref{thm-main result-rank one-d=1,2}}\label{sec4}
		
		This section presents the proof of Theorem \ref{thm-main result-rank one-d=1,2}. Let $\mathcal{W}_-^l(H_\alpha,H_0)$ and $\mathcal{W}_-^h(H_\alpha,H_0)$ denote the low-energy and high-energy components of the operator $\mathcal{W}_-(H_\alpha,H_0)$, as defined in  \eqref{eq-low energy wave operator} and \eqref{eq-high energy wave operator}, respectively. The theorem follows by combining the results from Proposition \ref{theorem-low energy-d=1,2}, \ref{theorem-high energy-d=1,2} and \ref{theorem-low energy-d=1,2 unbounded} below.

		\subsection{The low energy contribution}

		We first analyze the properties of $F^\pm(\lambda^2)$ and $G^{\alpha}_{+}(\lambda)$ as $\lambda\to0$. Similar results have been established in \cite{CHZ23}.
		\begin{lemma}\label{lemma-F low energy-d=1,2}
			Let $d=1, 2$, and assume  \eqref{eq-decay condition} holds. For $0<\lambda<\frac{1}{2}$, the following expansions hold: \\
			(i)  For $d=1,$ 
			\begin{equation}\label{eq-F expansion-d=1}
				F^\pm(\lambda^2)=\frac{\pm i}{2}\left(\int_\mathbb{R}\varphi(x)\d x\right)^2\cdot \lambda ^{-1}+b_1+r_1^\pm(\lambda),
			\end{equation}
			where~$b_1=-\frac{1}{2}\big\langle \int_{\mathbb{R}}|x-y|\varphi(y)\d y,\varphi(x)\big\rangle\neq 0$, and the remainder term $r_1^\pm(\lambda)$ satisfies
			\begin{equation}\label{eq: est for r_1}
				\Big|\frac{\d^k}{\d\lambda^k}r_1^\pm(\lambda)\Big|\lesssim \, \lambda^{\frac{1}{2}-k},\quad k=0,1,2.
			\end{equation}
			(ii)  For $d=2,$ 
			\begin{equation}\label{eq-F expansion-d=2}
				F^\pm(\lambda^2)=\left( \frac{\pm i}{4}-\frac{1}{2\pi}\gamma+\frac{1}{2\pi}\log2-\frac{1}{2\pi}\log\lambda\right)\left( \int_{\mathbb{R}^2}\varphi(x)\d x\right)^2+b_1+r_2^\pm(\lambda),
			\end{equation}
			where~$b_1=-\frac{1}{2\pi}\big\langle \int_{\mathbb{R}^2} \log|x-y|\varphi(y)\d y,\varphi(x)\big\rangle \neq 0$, and   the remainder term $r_2^\pm(\lambda)$  satisfies
			\begin{equation}\label{eq: est for r2}
				\Big|\frac{\d^k}{\d\lambda^k}r_2^\pm(\lambda)\Big|\lesssim \, \lambda^{\frac{1}{2}-k},\quad k=0,1,2.
			\end{equation}
		\end{lemma}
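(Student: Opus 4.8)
The strategy is to compute $F^\pm(\lambda^2)=\langle R_0^\pm(\lambda^2)\varphi,\varphi\rangle=\iint_{\R^{2d}}R_0^\pm(\lambda^2;x,y)\,\varphi(y)\varphi(x)\,\d x\,\d y$ by inserting the small-$\lambda$ description of the free kernel, extracting the finitely many pieces that are singular in $\lambda$ or are genuine constants (the $b_1$ term), and collecting the rest into $r_d^\pm(\lambda)$; the derivative bounds on $r_d^\pm$ then follow by differentiating under the integral sign, using that \eqref{eq-decay condition} makes weighted integrals $\iint|x-y|^{\sigma}|\varphi(y)\varphi(x)|\,\d x\,\d y$ finite whenever $\sigma<\delta-d$ (in particular for $\sigma=2$, since $\delta>d+2$).

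For $d=1$ the kernel is explicit, $R_0^\pm(\lambda^2;x,y)=\frac{\pm i}{2\lambda}e^{\pm i\lambda|x-y|}$ by \eqref{eq2.1}. Writing $e^{is}=1+is+\Phi(s)$ with $\Phi(s):=e^{is}-1-is$ gives
\[
F^\pm(\lambda^2)=\frac{\pm i}{2}\Big(\int_{\R}\varphi\Big)^2\lambda^{-1}-\frac12\iint_{\R^2}|x-y|\,\varphi(y)\varphi(x)\,\d x\,\d y+\frac{\pm i}{2\lambda}\iint_{\R^2}\Phi(\pm\lambda|x-y|)\,\varphi(y)\varphi(x)\,\d x\,\d y,
\]
which is \eqref{eq-F expansion-d=1} with $r_1^\pm(\lambda)$ the last integral and $b_1$ the (absolutely convergent) middle one. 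By Leibniz applied to $\lambda^{-1}\Phi(\pm\lambda|x-y|)$, $\partial_\lambda^k r_1^\pm(\lambda)$ is a finite combination of $\lambda^{-(k+1-m)}\iint_{\R^2}|x-y|^m\,\Phi^{(m)}(\pm\lambda|x-y|)\,\varphi(y)\varphi(x)\,\d x\,\d y$ with $0\le m\le k$; using the elementary bounds $|\Phi^{(m)}(s)|\le|s|^{2-m}$ for $m=0,1,2$ (from $|e^{is}-1|\le|s|$ and Taylor's theorem), each such term is $\lesssim\lambda^{-(k+1-m)}\lambda^{2-m}\iint|x-y|^2|\varphi(y)\varphi(x)|=\lambda^{1-k}\iint|x-y|^2|\varphi(y)\varphi(x)|\lesssim\lambda^{1-k}$. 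Since $\lambda<\tfrac12$, this yields \eqref{eq: est for r_1} (with room to spare).

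For $d=2$ we have $|x-y|^{2-d}=1$, so \eqref{eq-epansion-small-argument} (equivalently, the classical small-argument expansion of the Hankel function $H_0^\pm$) reads, for $0<\lambda|x-y|\le1$,
\[
R_0^\pm(\lambda^2;x,y)=\upsilon_0^\pm+\mu_0\log(\lambda|x-y|)+\sum_{j\ge1}B_j^\pm(\pm\lambda|x-y|),\qquad \upsilon_0^\pm=\tfrac{\pm i}{4}-\tfrac{\gamma}{2\pi}+\tfrac{\log 2}{2\pi},\quad \mu_0=-\tfrac{1}{2\pi},
\]
with $B_j^\pm(z)=c_j^\pm z^{2j}+\mu_j z^{2j}\log z$. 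We set $r_2^\pm(\lambda):=\iint_{\R^4}\bigl[R_0^\pm(\lambda^2;x,y)-\upsilon_0^\pm-\mu_0\log(\lambda|x-y|)\bigr]\varphi(y)\varphi(x)\,\d x\,\d y$, and, since $\mu_0\iint\log(\lambda|x-y|)\varphi(y)\varphi(x)=\mu_0\log\lambda\,\bigl(\int_{\R^2}\varphi\bigr)^2+\mu_0\iint\log|x-y|\,\varphi(y)\varphi(x)$, this reproduces \eqref{eq-F expansion-d=2} with $b_1=-\tfrac{1}{2\pi}\bigl\langle\int_{\R^2}\log|x-y|\,\varphi(y)\,\d y,\varphi(x)\bigr\rangle$ (finite by \eqref{eq-decay condition}). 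To estimate $r_2^\pm$ and its $\lambda$-derivatives, split the $(x,y)$-integral into $\lambda|x-y|\le1$ and $\lambda|x-y|>1$. On the first region the integrand is $\sum_{j\ge1}B_j^\pm(\pm\lambda|x-y|)$ (up to a harmless additive constant absorbed into $\upsilon_0^\pm$), whose $\lambda$-derivatives up to order $2$ are, by the structure of the $B_j$, bounded by $\lambda^{2-k}|x-y|^2$ times a logarithmic factor; integrating this against $|\varphi(y)\varphi(x)|$ using \eqref{eq-decay condition} (which gives $\delta>d+2=4$) produces a bound $\lesssim\lambda^{2-k}\langle\log\lambda\rangle\lesssim\lambda^{1/2-k}$. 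On the second region \eqref{eq-epansion-small-argument} is no longer available, so one uses the large-argument asymptotic \eqref{eq-resolvent-large-argument-1}: there the integrand and its $\lambda$-derivatives up to order $2$ are dominated by finitely many terms of the form $\lambda^{-1/2-a}|x-y|^{b-1/2}$ with $a+b=k$ and by $\lambda^{-k}$ times a logarithmic factor, and since \eqref{eq-decay condition} yields $\iint_{|x-y|>1/\lambda}|x-y|^{M}|\varphi(y)\varphi(x)|\,\d x\,\d y\lesssim\lambda^{\delta-2-M}$ for every fixed $M\ge0$, this region contributes $\lesssim\lambda^{\delta-2-k}\lesssim\lambda^{1/2-k}$. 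Combining the two regions gives \eqref{eq: est for r2}.

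Finally, the non-vanishing of $b_1$ follows exactly as in the proof of Lemma~\ref{lemma-F low energy-d>3}: $-\tfrac12|x|$ (for $d=1$) and $-\tfrac{1}{2\pi}\log|x|$ (for $d=2$) are constant multiples of the fundamental solution of $-\Delta$, so $b_1$ equals, up to a positive constant, $\langle(-\Delta)^{-1}\varphi,\varphi\rangle\neq0$. The step I expect to be most delicate is the $d=2$ bookkeeping: correctly matching the constant $\upsilon_0^\pm$, the $\log\lambda$-coefficient $\mu_0$, and the $\log|x-y|$-term coming out of the Hankel expansion with the three terms displayed in \eqref{eq-F expansion-d=2}, and then verifying that the region $\lambda|x-y|\gtrsim1$, where that expansion breaks down, contributes only a high power of $\lambda$ — this last point is exactly where the near-critical decay $\delta>d+2$ is consumed. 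By contrast the $d=1$ argument is essentially a two-term Taylor expansion combined with the cancellation built into $\Phi(s)=e^{is}-1-is$.
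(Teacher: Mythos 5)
Your argument is correct in substance, and for $d=1$ it is essentially the paper's proof: the paper also Taylor-expands the explicit kernel \eqref{eq2.1} as $\pm\frac{i}{2\lambda}-\frac12|x-y|+r_1^\pm(\lambda,|x-y|)$ with $|\partial_\lambda^k r_1^\pm(\lambda,|x-y|)|\lesssim\lambda^{1-k}|x-y|^2$ and then integrates against $\varphi\otimes\varphi$ using \eqref{eq-decay condition}; your $\Phi(s)=e^{is}-1-is$ bookkeeping reproduces exactly this (and, like the paper, actually yields the stronger bound $\lambda^{1-k}$). For $d=2$ you take a genuinely different route: the paper simply quotes a \emph{global} (uniform in $x,y$) expansion of the two-dimensional free resolvent kernel, \eqref{eq: exp for reso d=2} with remainder bound \eqref{eq: exp for reso d=2-76} $\lesssim\lambda^{1-k}\langle x-y\rangle^{3/2}$ from \cite{CHZ23}, so that the lemma follows by one integration, whereas you re-derive the needed control from scratch by splitting at $\lambda|x-y|=1$, using the small-argument Hankel expansion \eqref{eq-epansion-small-argument} on the near region and the large-argument asymptotics \eqref{eq-resolvent-large-argument-1} together with the decay of $\varphi$ on the far region. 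This is self-contained (no citation needed) at the cost of slightly lossier but still sufficient bounds ($\lambda^{2-k}\langle\log\lambda\rangle$ and $\lambda^{\delta-2-k}$ versus the paper's $\lambda^{1-k}$); your constant/$\log\lambda$/$\log|x-y|$ matching with \eqref{eq-F expansion-d=2} is the same as the paper's. One small overstatement: your far-region bound $\iint_{|x-y|>1/\lambda}|x-y|^M|\varphi(x)\varphi(y)|\,\d x\,\d y\lesssim\lambda^{\delta-2-M}$ requires $M<\delta-2$, not ``every fixed $M\ge0$''; since only $M\le k-\tfrac12\le\tfrac32$ occurs and $\delta>4$, this is harmless.

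The one genuinely shaky point is your closing claim that $b_1\neq0$ ``follows exactly as in the proof of Lemma \ref{lemma-F low energy-d>3}.'' In $d\ge3$ that argument rests on the positivity $\langle(-\Delta)^{-1}\varphi,\varphi\rangle=\int|\hat\varphi(\xi)|^2|\xi|^{-2}\,\d\xi>0$, which is legitimate because the integral converges. In $d=1,2$ with $\int\varphi\neq0$ this Fourier representation diverges at $\xi=0$, and the kernel expressions $-\tfrac12\iint|x-y|\varphi\varphi$ and $-\tfrac1{2\pi}\iint\log|x-y|\,\varphi\varphi$ are not sign-definite, so the identification ``$b_1=$ positive constant times $\langle(-\Delta)^{-1}\varphi,\varphi\rangle$'' is not available (it does work when $\int\varphi=0$, since then $\hat\varphi(0)=0$ removes the infrared singularity). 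Note the paper's own proof does not attempt to justify the assertion $b_1\neq0$ at all, and the non-vanishing plays no role in how the lemma is subsequently used (only the spectral assumption \eqref{eq-spectral condition} enters), so this does not affect the validity of your main estimates — but your stated justification of it should be dropped or restricted to the case $\int\varphi=0$.
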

		\begin{proof}
			We first consider the case $d=1$. Recall that
			\begin{equation}\label{eq-free-1d-6-22}
				R^\pm_0(\lambda^2;x,y)=\frac{\pm i}{2\lambda}e^{\pm i\lambda|x-y|}.
			\end{equation}
			Expanding $R^\pm_0(\lambda^2;x,y)$ in a Taylor series yields
			\begin{equation}\label{eq: exp for reso d=1}
				R^\pm_0(\lambda^2;x,y)=\pm\frac{i}{2\lambda}- \frac 12 |x-y|+ r_{1}^{\pm}(\lambda, |x-y|),
			\end{equation}
			where the remainder term $r_{1}^{\pm}(\lambda, |x-y|)$ satisfies 
			\begin{equation}\label{eq: est for the int kernel of r_1}
				\Big|\partial_\lambda^{k} r_{1}^{\pm}(\lambda, |x-y|)\Big|\lesssim \lambda^{1-k}|x-y|^{2}, \quad \,\,  k=0,1,2. 
			\end{equation}
			This implies \eqref{eq-F expansion-d=1}.
			Furthermore, combining  \eqref{eq: est for the int kernel of r_1} with the decay assumption $|\varphi(x)| \lesssim \langle x \rangle^{-3-}$  in \eqref{eq-decay condition} yields \eqref{eq: est for r_1}.
			
			For $d=2$, the integral kernel $R_0^\pm(\lambda^2; x,y)$  admits the expansion (see e.g. in \cite[(2.14)]{CHZ23})
			\begin{equation}\label{eq: exp for reso d=2}
				R_0^\pm(\lambda^2; x,y)=\left( \frac{\pm i}{4}-\frac{1}{2\pi}\gamma+\frac{1}{2\pi}\log2-\frac{1}{2\pi}\log\lambda\right)-\frac{1}{2\pi}\log|x-y|+r_2^\pm(\lambda,|x-y|),
			\end{equation}
			where 
			\begin{equation}\label{eq: exp for reso d=2-76}
				\Big|\partial_\lambda^{k} r_{2}^{\pm}(\lambda, |x-y|)\Big|\lesssim \lambda^{1-k}\langle x-y\rangle^{\frac 32}, \quad  k=0,1,2. 
			\end{equation}
			The decay condition 
			$|\varphi(x)|\lesssim \langle x\rangle^{-4 -}$ from \eqref{eq-decay condition} yields \eqref{eq-F expansion-d=2} and \eqref{eq: est for r2}
		\end{proof}
		
		As an immediate consequence, we have
		\begin{lemma}\label{lem: for int donse not vanish}
			Let $d=1, 2$ and let \eqref{eq-decay condition} hold. The following statements hold for $0<\lambda<\lambda_0$ with a small fixed $\lambda_0>0$:\\
			(i) If $\int_{\mathbb{R}^d}{\varphi\,\d x}=0$,  then for $k=0,1,2$, we have
			\begin{equation}\label{eq-G-pm0501}
				\Big|\frac{\d^k}{\d\lambda^k}G_+^{\alpha}(\lambda)\Big|\lesssim \lambda^{-k}.
			\end{equation}
			(ii) If $\int_{\mathbb{R}^d}{\varphi\,\d x}\neq 0$ and $d=1$, we have
			\begin{equation}\label{eq: expansion for G when d=1}
				G^{\alpha}_{+}(\lambda)= (a_1^{\pm})^{-1}\lambda+ B_1^{+}(\lambda),	
			\end{equation}
			where $a_1^{\pm}=\frac{\pm i}{2}\big(\int_{\mathbb{R}}\varphi(x)\d x\big)^2$ and $B_1^{\pm}(\lambda)$ satisfies 
			\begin{equation}\label{eq: est for B1}
				\Big|\frac{\d^k}{\d\lambda^k}B_1^{\pm}(\lambda)\Big|\lesssim \lambda^{2-k}, \quad k=0,1,2.
			\end{equation}
			(iii)  If $\int_{\mathbb{R}^d}{\varphi\,\d x}\neq 0$ and $d=2$,  we have
			\begin{equation}\label{eq: expansion for G when d=2}
				G^{\alpha}_{+}(\lambda)=(a_2)^{-1}(\log\lambda)^{-1}+ B_2^{+}(\lambda),
			\end{equation}
			where $a_2=-\frac{1}{2\pi}\big(\int_{\mathbb{R}^2}\varphi(x)\d x\big)^2$ and $B_2^{\pm}(\lambda)$ satisfies
			\begin{equation}\label{eq: est for B2}
				\Big|\frac{\d^k}{\d\lambda^k}B_2^{\pm}(\lambda)\Big|\le C \, |\log\lambda|^{-2} \lambda^{-k}, \quad k=0,1,2.
			\end{equation}
		\end{lemma}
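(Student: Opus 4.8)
My plan is to derive the three estimates directly from the expansions of $F^\pm(\lambda^2)$ in Lemma \ref{lemma-F low energy-d=1,2} via the algebraic identity $G_+^{\alpha}(\lambda)=\alpha\big(1+\alpha F^+(\lambda^2)\big)^{-1}$, choosing $\lambda_0$ small (and invoking the spectral assumption \eqref{eq-spectral condition} in part (i)) so that the relevant denominators stay bounded away from zero on $(0,\lambda_0]$. For part (i), when $\int_{\R^d}\varphi\,\d x=0$ the singular terms in \eqref{eq-F expansion-d=1}--\eqref{eq-F expansion-d=2} drop out, leaving $1+\alpha F^+(\lambda^2)=(1+\alpha b_1)+\alpha r_d^+(\lambda)$ with $|\partial_\lambda^k r_d^+(\lambda)|\lesssim\lambda^{1/2-k}$ for $k=0,1,2$ and $|1+\alpha F^+(\lambda^2)|\ge c_0>0$ (in particular $1+\alpha b_1\ne0$ by letting $\lambda\to0$). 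I would then apply Fa\`a di Bruno's formula to $\lambda\mapsto\alpha/(1+\alpha F^+(\lambda^2))$: $\partial_\lambda^k G_+^{\alpha}$ is a finite sum of terms $\prod_{i=1}^m\big(1+\alpha F^+\big)^{(j_i)}$ divided by a power of $1+\alpha F^+$, with each $j_i\ge1$ and $\sum_i j_i=k$, and each term is $\lesssim\prod_i\lambda^{1/2-j_i}=\lambda^{m/2-k}\lesssim\lambda^{-k}$ since $\lambda<\lambda_0<1$; this gives \eqref{eq-G-pm0501}.

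For part (ii) ($d=1$ with $\int_{\R}\varphi\,\d x\ne0$), I would set $a_1^+=\tfrac{i}{2}\big(\int\varphi\big)^2\ne0$ and clear the singular factor by writing $\widetilde h(\lambda):=\lambda\big(1+\alpha F^+(\lambda^2)\big)=\alpha a_1^++\lambda\psi(\lambda)$, where by \eqref{eq-F expansion-d=1} $\psi(\lambda)=(1+\alpha b_1)+\alpha r_1^+(\lambda)$ is bounded with $|\psi^{(j)}(\lambda)|\lesssim\lambda^{1/2-j}$. Shrinking $\lambda_0$ so that $|\widetilde h(\lambda)|\ge|\alpha a_1^+|/2$ on $(0,\lambda_0]$, we get $G_+^{\alpha}(\lambda)=\alpha\lambda/\widetilde h(\lambda)$, hence $B_1^+(\lambda):=G_+^{\alpha}(\lambda)-(a_1^+)^{-1}\lambda=-\lambda^2\psi(\lambda)\big(a_1^+\widetilde h(\lambda)\big)^{-1}$. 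Since $\widetilde h^{(j)}(\lambda)=O(\lambda^{1/2-j})$ for $j\ge1$ and $1/\widetilde h$ is bounded, Fa\`a di Bruno gives $\partial_\lambda^k\big(\psi/(a_1^+\widetilde h)\big)=O(\lambda^{1/2-k})$ for $k\ge1$ and $O(1)$ for $k=0$; multiplying by $\lambda^2$ and applying Leibniz yields $|\partial_\lambda^k B_1^+(\lambda)|\lesssim\lambda^{2-k}$, $k=0,1,2$, which is \eqref{eq: est for B1}.

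The main obstacle is part (iii) ($d=2$ with $\int\varphi\,\d x\ne0$), where both the logarithmic singularity and the error $r_2^\pm$ are present. From \eqref{eq-F expansion-d=2} I would write $1+\alpha F^+(\lambda^2)=\alpha a_2 L+(1+\alpha d_0)+\alpha r_2^+(\lambda)$ with $L:=\log\lambda$, $a_2=-\tfrac1{2\pi}\big(\int\varphi\big)^2\ne0$, and $d_0$ the constant collecting the remaining terms of \eqref{eq-F expansion-d=2}, then factor out $\alpha a_2 L$ (legitimate since $|L|\ge|\log\lambda_0|>1$ for $\lambda_0$ small): $1+\alpha F^+(\lambda^2)=\alpha a_2 L\,\big(1+L^{-1}\rho(\lambda)\big)$, where $\rho(\lambda)=\big((1+\alpha d_0)+\alpha r_2^+(\lambda)\big)/(\alpha a_2)$ is bounded with $|\rho^{(j)}(\lambda)|\lesssim\lambda^{1/2-j}$ and, after a further shrinking of $\lambda_0$, $|L^{-1}\rho(\lambda)|\le\tfrac12$. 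Summing the geometric series gives $G_+^{\alpha}(\lambda)=(a_2L)^{-1}\big(1+L^{-1}\rho(\lambda)\big)^{-1}=(a_2)^{-1}(\log\lambda)^{-1}+B_2^+(\lambda)$ with $B_2^+(\lambda)=-\rho(\lambda)\big(a_2L^2(1+L^{-1}\rho(\lambda))\big)^{-1}$. The delicate step is the derivative bookkeeping: since $\partial_\lambda L=\lambda^{-1}$, each $\partial_\lambda$ applied to $L^{-j}$ produces $-jL^{-j-1}\lambda^{-1}$, so $|\partial_\lambda^k(L^{-j})|\lesssim|L|^{-j}\lambda^{-k}$ (the power of $1/L$ never drops), and likewise $|\partial_\lambda^k\big((1+L^{-1}\rho)^{-1}\big)|\lesssim\lambda^{-k}$; combining these through Leibniz with $|\rho^{(k)}(\lambda)|\lesssim\lambda^{-k}$ (absorbing the spare $\lambda^{1/2}$) confirms that the two factors $L^{-1}$ in $B_2^+$ are never consumed, yielding $|\partial_\lambda^k B_2^+(\lambda)|\lesssim|\log\lambda|^{-2}\lambda^{-k}$ for $k=0,1,2$, i.e. \eqref{eq: est for B2}. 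I expect verifying that the $|\log\lambda|^{-2}$ gain persists under differentiation to be the only genuinely delicate point.
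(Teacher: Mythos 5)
Your proposal is correct and is essentially the paper's (omitted) argument: the paper presents this lemma as an immediate consequence of the expansions in Lemma \ref{lemma-F low energy-d=1,2}, and your computation---factoring out the leading singular term $\alpha a_1^{+}\lambda^{-1}$ (resp. $\alpha a_2\log\lambda$) from $1+\alpha F^{+}(\lambda^2)$ and tracking derivatives via Leibniz/Fa\`a di Bruno, with the observation that differentiating powers of $(\log\lambda)^{-1}$ costs $\lambda^{-1}$ but never consumes the logarithmic gain---is exactly the calculation the paper leaves to the reader. The only cosmetic point is that in part (i) you invoke the spectral assumption \eqref{eq-spectral condition}, which is not among the lemma's stated hypotheses; this is harmless, both because Condition $H_\alpha$ is in force wherever the lemma is applied and because $b_1=\langle(-\Delta)^{-1}\varphi,\varphi\rangle>0$ when $\int\varphi\,\d x=0$, so $1+\alpha F^{+}(\lambda^2)$ is bounded away from zero for small $\lambda_0$ in any case.
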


		We now prove the boundedness of the low-energy operator $\mathcal{W}_-^l(H_\alpha,H_0)$ with the $\lambda_0$ be chosen as in Lemma \ref{lem: for int donse not vanish}.
		
		\begin{proposition}\label{theorem-low energy-d=1,2}
			Let $d = 1, 2$, and let $\lambda_0$ be as in Lemma~\ref{lem: for int donse not vanish}. Suppose \textbf{Condition $H_\alpha$} holds. \\
			(i) If $\int_{\mathbb{R}^d}\varphi(x)\d x=0,$ then the low-energy wave operator $\mathcal{W}_-^l(H_\alpha,H_0)$ is bounded on $L^p(\mathbb{R}^d)$ for all $1\le p\le \infty.$ \\
			(ii) If $\int_{\mathbb{R}^d}\varphi(x)\d x\neq0,$ then  $\mathcal{W}_-^l(H_\alpha,H_0)$ is bounded on $L^p(\mathbb{R}^d)$ for all $1< p<\infty$. Additionally, it is bounded from $L^1(\mathbb{R}^d)$ to the weak $L^1$ space  $L^{1,\infty}(\mathbb{R}^d)$.
		\end{proposition}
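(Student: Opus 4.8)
The plan is to analyze the integral kernel $\mathcal{W}_-^l(x,y)$ of the low-energy operator directly, using the expansions of the free resolvent around $\lambda = 0$ in dimensions $d=1,2$ from \eqref{eq: exp for reso d=1} and \eqref{eq: exp for reso d=2}, together with the asymptotic expansions of $G_+^\alpha(\lambda)$ from Lemma \ref{lem: for int donse not vanish}. Substituting these into \eqref{eq-low energy wave operator}, the kernel decomposes into a ``main'' term (coming from the product of the leading behaviors of $G_+^\alpha$ and the singular/constant parts of the resolvent kernels) plus several remainder terms. Each remainder will involve an inner $\lambda$-integral of the form $\int_0^\infty e^{i\lambda\rho_\pm}\psi(\lambda)\chi(\lambda)\,\d\lambda$ with $\rho_\pm = |x-x_1|\pm|x_2-y|$ and $\psi$ satisfying symbol-type bounds $|\psi^{(l)}(\lambda)|\lesssim\lambda^{b-l}$; applying Lemma \ref{oscillatory estimates} gives decay $\langle\rho_\pm\rangle^{-(b+1)}$, and then a Fubini/polar-coordinates argument as in \eqref{eq-qdmi-7-4-1} (using $\varphi\in L^1$ by \eqref{eq-decay condition}) shows these remainder pieces are admissible, hence bounded on $L^p$ for all $1\le p\le\infty$.

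For part (i), when $\int\varphi\,\d x = 0$, the most singular part of $R_0^\pm(\lambda^2;x,y)$ — the $\lambda^{-1}$ term in $d=1$ and the $\log\lambda$ term in $d=2$ — gets annihilated after integrating against $\varphi$, so $F^\pm(\lambda^2)$ is bounded near zero and by Lemma \ref{lem: for int donse not vanish}(i), $G_+^\alpha(\lambda)$ satisfies $|\partial_\lambda^k G_+^\alpha|\lesssim\lambda^{-k}$. In this case the full low-energy kernel is built from smooth-enough symbols, and each resulting piece is admissible by the oscillatory-integral-plus-Fubini scheme above. One must be slightly careful in $d=2$ with the $\log|x-x_1|$ and $\log|x_2-y|$ factors, but since $\varphi$ has fast decay these are locally integrable and controlled by Lemma \ref{lemma-GV}; I would split $|x-x_1|\le 1$ and $|x-x_1|\ge 1$ and handle the logarithm on each region.

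For part (ii), when $\int\varphi\,\d x\ne 0$, the leading term of $G_+^\alpha(\lambda)$ is $\lambda$ (in $d=1$) or $(\log\lambda)^{-1}$ (in $d=2$) by Lemma \ref{lem: for int donse not vanish}(ii)--(iii), which is a \emph{smaller} prefactor than in part (i); however, when this is paired with the singular resolvent factor (the $\lambda^{-1}$ or $\log\lambda$ from $R_0^\pm$ acting on the $\langle\cdot,\varphi\rangle$ slot, where $\int\varphi\ne0$ so the singularity survives), the resulting symbol near $\lambda=0$ is bounded but \emph{not} smooth — in $d=1$ one gets a product like $\lambda\cdot\lambda^{-1}=1$ times an oscillatory factor, and in $d=2$ terms like $(\log\lambda)^{-1}\log\lambda$. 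The key point is that the ``main'' kernel term then reduces to (a constant multiple of) a fixed rank-one-type operator or a Hilbert-transform-type / Riesz-transform-type singular integral composed with bounded pieces, which is bounded on $L^p$ for $1<p<\infty$ and of weak type $(1,1)$ but generally fails on $L^1$. I would isolate this main term explicitly, recognize it as (a superposition over $x_1,x_2$ of) translates/dilates of a classical Calderón–Zygmund kernel acting on $f$, and invoke the standard $L^p$ and weak-$(1,1)$ theory; the remaining terms are admissible as before.

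The main obstacle I expect is the careful bookkeeping in part (ii): correctly identifying which combination of the $G_+^\alpha$ expansion and the resolvent expansion produces the genuinely singular (non-admissible) contribution, extracting it in a clean closed form amenable to Calderón–Zygmund theory, and verifying uniformity of all constants in the spatial parameters $x_1, x_2$ so that the $L^1$-integration in those variables (against $\varphi$) converges. The failure of $L^1$-boundedness — and the explicit counterexample — will presumably be deferred to Proposition \ref{theorem-low energy-d=1,2 unbounded}, so here I only need the positive statements: $L^p$ for $1<p<\infty$ and weak type $(1,1)$.
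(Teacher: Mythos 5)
Your treatment of part (i) and of the one-dimensional half of part (ii) is essentially the paper's argument: the vanishing moment condition removes the $\lambda^{-1}$ (resp.\ $\log\lambda$) singularity, the inner $\lambda$-integral is controlled by Lemma \ref{oscillatory estimates}, admissibility follows by the Fubini/polar-coordinate scheme of \eqref{eq-qdmi-7-4-1}, and in $d=1$ with $\int\varphi\neq0$ the expansion \eqref{eq: expansion for G when d=1} produces, after integration by parts, exactly the truncated Hilbert-transform kernel $\frac{1}{2\pi i}\big(\frac{1}{|x|+|y|}-\frac{1}{|x|-|y|}\big)$ plus admissible errors, which is what the paper does (the $d=2$ case of part (i) needs the kernel bound of \cite[Lemma 3.2]{EGG18} rather than a soft region-splitting, but your outline is the same idea).

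The genuine gap is in part (ii) for $d=2$. Your plan — expand $G_+^{\alpha}$, isolate a ``main'' Calder\'on--Zygmund piece, and show the remainder is admissible — breaks down there for two reasons. First, by \eqref{eq: expansion for G when d=2}--\eqref{eq: est for B2} the remainder $B_2^{+}(\lambda)$ gains only a factor $|\log\lambda|^{-1}$ over the leading term, not a power of $\lambda$; since the two-dimensional resolvent factors only contribute $(\lambda|x-x_1|)^{-1/2}(\lambda|x_2-y|)^{-1/2}$, the inner $\lambda$-integral for this piece decays merely like $\langle\rho_-\rangle^{-1}$ up to logarithms, and $\int_{\mathbb{R}^2}|x-x_1|^{-1/2}\langle|x-x_1|-c\rangle^{-1}\,\d x=\infty$, so the ``remainder'' is \emph{not} admissible; meanwhile the ``main'' piece still carries the symbol $(\log\lambda)^{-1}$, which is neither a clean CZ kernel nor within reach of Lemma \ref{lem: Fourier Multiplier by Bernstein} at $p=1$ (no power decay). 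Second, even if a CZ-type main term were extracted, a continuous superposition over $x_1,x_2$ of weak-$(1,1)$ bounds cannot simply be integrated, since $\|\cdot\|_{L^{1,\infty}}$ is only a quasi-norm; one must analyze the averaged kernel as a single operator. The paper avoids both problems by the exact factorization $\mathcal{W}_-^l=T_\varphi\circ G_+^{\alpha}(\sqrt{-\Delta})\chi(\sqrt{-\Delta})$: Lemma \ref{lem-Fou-multiplier} is a dedicated stationary-phase argument showing that symbols with only logarithmic decay $|\log|\xi||^{-a}$, $a>1/d$, belong to $M_p$ for all $1\le p\le\infty$ (an ingredient your sketch does not anticipate and which standard multiplier theorems do not give), while Lemma \ref{lem-T-phi-0504} computes the kernel of $T_\varphi$ in closed form via the free propagator, arriving at $PV\frac{1}{|x|^2-|y|^2}+\pi\delta(|x|^2-|y|^2)$ smeared by $\varphi\otimes\varphi$, and reduces the weak-$(1,1)$ bound to the one-dimensional Hilbert transform in the radial variable. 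Without a substitute for these two steps, your outline does not establish the $d=2$ statements in part (ii).
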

		\begin{proof}
			We first prove statement (i).
			
			
			\textbf{Case  $d=1$}: By \eqref{eq-free-1d-6-22}
			the integral kernel of 
			$\mathcal{W}_-^l(H_\alpha,H_0)$ is
			\begin{equation}\label{eq-wave operator kernel-d=1}
				\mathcal{W}_-^l(H_\alpha,H_0)(x,y)=L_+(x,y)-L_-(x,y)
			\end{equation}
			where
			\begin{equation}\label{L-pm-0501}
				L_\pm(x,y)=\frac{i}{4\pi }\int_0^\infty e^{i\lambda(|x|\pm|y|)}\lambda^{-1} G^{\alpha}_{+}(\lambda)\chi(\lambda)E^\pm(\lambda;x,y)\d\lambda, 
			\end{equation}
			with
			\begin{equation}\label{E-pm-503}
				E^\pm(\lambda; x,y)=\int_{\mathbb{R}}e^{i\lambda (|x-x_1|-|x|)}\varphi(x_1)\d x_1\int_{\mathbb{R}}e^{\pm i\lambda(|x_2-y|-|y|)}\varphi(x_2)\d x_2.
			\end{equation}
			Using the condition $\int_{\mathbb{R}}\varphi(x)\d x=0$, we can rewrite $E^\pm$ as 
			\begin{align}\label{E-pm-05-1}
				E^\pm(\lambda; x,y)=&\int_{\mathbb{R}}\big(e^{i\lambda (|x-x_1|-|x|)}-1\big)\varphi(x_1)\d x_1\int_{\mathbb{R}}\big(e^{\pm i\lambda(|x_2-y|-|y|)}-1\big)\varphi(x_2)\d x_2\nonumber\\
				=&(i\lambda)^2\int_{\mathbb{R}}\int_0^{|x-x_1|-|x|}e^{i\lambda\theta_1}\varphi(x_1)\d\theta_1\d x_1\int_{\mathbb{R}}\int_{0}^{|x_2-y|-|y|}e^{\pm i\lambda\theta_2}\varphi(x_2)\d\theta_2\d x_2\nonumber\\
				:=&(i\lambda)^2\widetilde{E}^\pm(\lambda;x,y).
			\end{align}
			Under assumption \eqref{eq-decay condition}, a direct computation yields
			\begin{equation}\label{eq-E-pm-0501}
				\sup_{x,y}\left|\partial_{\lambda}^{k}\widetilde{E}^\pm(\lambda;x,y) \right|\lesssim 1, \quad k=0,1,2.
			\end{equation}
			And thus by \eqref{eq-G-pm0501}, there is
			\begin{equation}\label{eq-E-pm-0501-1}
				\sup_{x,y}\Big|\partial_{\lambda}^{k} \left[\lambda^{-1}G^{\alpha}_{+}(\lambda){E}^\pm(\lambda;x,y)\right] \Big|\lesssim \lambda^{1-l}, \quad \lambda\in (0,\lambda_0), \quad  k=0,1,2.
			\end{equation}
			Then by Lemma \ref{oscillatory estimates} with $b=\frac 12$, one has
			\begin{equation}\label{L-pm-0501-2}
				|L_\pm(x,y)|\lesssim\langle |x|\pm|y|\rangle^{-\frac 32}.
			\end{equation}
			This immediately implies  that $L_\pm$ is admissible.

			\textbf{Case  $d=2$}: Recall that
			\begin{equation*}\label{eq-free kernel-d=2}
				R^\pm_0(\lambda^2;|x-y|)=\frac{\pm i}{4}H_0^\pm(\lambda|x-y|),
			\end{equation*}
			and 
			\begin{equation*}\label{eq-free kernel-d=2-2}
				R^+_0(\lambda^2;|x-y|)-R^-_0(\lambda^2;|x-y|)=\frac{i}{2}J_0(\lambda|x-y|).
			\end{equation*}
			Hence,  the kernel of $\mathcal{W}_-^l(H_\alpha,H_0)$ can be written as
			\begin{equation*}\label{eq-wave operator kernel-d=2 low energy}
				\begin{split}
					\mathcal{W}_-^l&(H_\alpha,H_0)(x,y)\\
					=&\frac{i}{ 8\pi }\int_0^\infty\lambda G^{\alpha}_{+}(\lambda)\chi(\lambda)\Big(\int_{\mathbb{R}^2}H^+_0(\lambda|x-x_1|)\varphi(x_1)\d x_1\cdot\int_{\mathbb{R}^2}J_0(\lambda|x_2-y|)\varphi(x_2)\d x_2\Big)\d\lambda.
				\end{split}
			\end{equation*}
			Since $\int_{\mathbb{R}^2}\varphi(x)\d x=0$, we can rewrite $\mathcal{W}_-^l(H_\alpha,H_0)(x,y)$ in the form
			\begin{equation*}\label{eq-wave operator kernel-d=2 low energy-2}
				\begin{split}
					&\mathcal{W}_-^l(x,y)\\
					=&\frac{i}{ 8\pi }\int_{\mathbb{R}^4}\Big(\int_{\langle x\rangle}^{|x-x_1|}\int_{|y|}^{|x_2-y|}\int_0^\infty\lambda^3 G^{\alpha}_{+}(\lambda)\chi(\lambda)(H^+_0)'(\lambda\theta_1)J'_0(\lambda\theta_2)\d\lambda \d\theta_2\d\theta_1\Big)\cdot\varphi(x_1)\varphi(x_2)\d x_1\d x_2.
				\end{split}
			\end{equation*}
			The integral with respect to $\lambda$ is bounded  by (see  \cite[Lemma 3.2]{EGG18})
			\begin{equation*}\label{eq-integral bound}
				\Big|\int_0^\infty\lambda^3 G^{\alpha}_{+}(\lambda)\chi(\lambda)(H^+_0)'(\lambda\theta_1)J'_0(\lambda\theta_2)\d\lambda\Big|\lesssim k(\theta_1,\theta_2),
			\end{equation*}
			where
			$$k(\theta_1,\theta_2):=\frac{1}{\sqrt{\theta_1\theta_2}\langle\theta_1-\theta_2\rangle^2}+\frac{1}{\theta_1\langle\theta_1+\theta_2\rangle^{2+}}.$$
			By interchanging the order of integration for
			$\theta_2$ and $x_2$ and using the assumption \eqref{eq-decay condition}, we obtain
			\begin{align*}
				|\mathcal{W}_-^l(x,y)|
				&\leq  \int_{\R^2}\int_{\langle  x\rangle  }^{|x-x_1|}\int_0^{\infty} \int_{|x_2| \geq |\theta_2-|y||}  k(\theta_1,\theta_2)\left| \varphi(x_1)\right| \left| \varphi(x_2) \right|  
				\,\d x_2 \d\theta_2 \d\theta_1 \d x_1 \\
				&\les \int_{\R^2} \int_{\langle  x\rangle  }^{|x-x_1|}\int_0^{\infty}    k(\theta_1,\theta_2) \langle\theta_2-|y|\rangle  ^{-N} \left| \varphi(x_1)\right| \d\theta_2 \d\theta_1 \d x_1, 
			\end{align*}
			for some $N=\frac{\delta}{2}>2$.
			Noting  $\theta_1>\min\{1,\,|x-x_1|\}$ in the above integral, we have 
			$$\frac{1}{\theta_1 \langle\theta_1+\theta_2\rangle^{2+}}\les \frac{1+|x-x_1|^{-1}}{\langle \theta_1 \rangle \langle \theta_1+\theta_2\rangle^{2+}} \quad  \text{if $\theta_1 \le 1$}.$$
			Applying the same argument to the $\theta_1$ and $x_1$ integrals,   we derive
			\begin{equation}\label{eq:to tilde k}
				\begin{aligned}
					&\int_{\R^2} \int_{\langle  x\rangle  }^{|x-x_1|}\int_0^{\infty}  k(\theta_1,\theta_2) \langle\theta_2-|y|\rangle  ^{-N} \left| \varphi(x_1)\right| \d\theta_2 \d\theta_1 \d x_1\\
					&\les \int_0^\infty \int_0^{\infty} \tilde{k}(\theta_1,\theta_2) \langle\theta_1-|x|\rangle  ^{-N} \langle\theta_2-|y|\rangle^{-N}\d\theta_2 \d\theta_1,
				\end{aligned}    
			\end{equation}
			where
			$$\tilde{k}(\theta_1,\theta_2)=k(\theta_1,\theta_2):=\frac{1}{\sqrt{\theta_1\theta_2}\langle\theta_1-\theta_2\rangle^2}+\frac{1}{\langle \theta_1\rangle \langle\theta_1+\theta_2\rangle^{2+}}.$$
			In the above inequality, we use the inequality
			\begin{align*}
				\int_{\R^2}\int_{\langle  x\rangle  }^{|x-x_1|} \frac{1}{\theta_1 \langle\theta_1+\theta_2\rangle^{2+}} \left| \varphi(x_1)\right| \d\theta_1 \d x_1 
				&\les  \int_{\R^2}\int_{\langle  x\rangle  }^{|x-x_1|} \frac{\langle\theta_1-|x|\rangle^{-N}}{\theta_1 \langle\theta_1+\theta_2\rangle^{2+}}  \left| |x_1|^N \varphi(x_1)\right| \d\theta_1 \d x_1  \\
				&\les \int_{\R^2} \int_0^\infty \frac{\langle\theta_1-|x|\rangle^{-N} (1+|x-x_1|^{-1})}{\langle \theta_1 \rangle \langle \theta_1+\theta_2\rangle^{2+}}  \left| |x_1|^N \varphi(x_1)\right| \d\theta_1 \d x_1 \\
				&\les \int_0^\infty \frac{1}{\langle \theta_1 \rangle \langle \theta_1+\theta_2\rangle^{2+}}\langle\theta_1-|x|\rangle^{-N}\d \theta_1.
			\end{align*}
			Note that 
			$$
			\int_{\mathbb{R}^2}{\langle\theta_1-|x|\rangle  ^{-N} \, \d x} \lesssim \langle \theta_1 \rangle
			$$
			holds for $N>2$.
			Applying Lemma \ref{lemma-GV}, we have
			\begin{align*}
				\sup_{y\in\mathbb{R}^2}\int |\mathcal{W}_-^l(x,y)|\d x	\le \sup_{y\in\mathbb{R}^2}\int_0^\infty \int_0^{\infty} \tilde{k}(\theta_1,\theta_2) \langle \theta_1 \rangle  \langle\theta_2-|y|\rangle^{-N} \d\theta_2 \d\theta_1   \lesssim 1.
			\end{align*}
			Similarly, 
			\begin{align*}
				\sup_{x\in\mathbb{R}^2}\int |\mathcal{W}_-^l(x,y)|\d y\le 	\sup_{x\in\mathbb{R}^2}\int_0^\infty \int_0^{\infty} \tilde{k}(\theta_1,\theta_2) \langle \theta_2 \rangle\langle\theta_1-|x|\rangle  ^{-N} \d\theta_2 \d\theta_1   \lesssim 1.
			\end{align*}
			Hence, the kernel $\mathcal{W}_-^l(x,y)$ is admissible, which completes  the proof for the case $d=2$.
			
			Next, we prove statement (ii).
			
			\textbf{Case   $d=1$:} 
			Recall that the integral kernel of $\mathcal{W}_-^l(H_\alpha,H_0)$ is given by \eqref{eq-wave operator kernel-d=1}-\eqref{E-pm-503}. Using  expansion \eqref{eq: expansion for G when d=1}, we write
			\begin{align}\label{W-pm-05-3}
				\mathcal{W}_-^l(H_\alpha,H_0)(x,y)=&\frac{i}{ 4\pi a_1^{+}}\left(\int_0^\infty e^{i\lambda(|x|+|y|)}\chi(\lambda) E^+(\lambda;x,y)\d\lambda-\int_0^\infty e^{i\lambda(|x|-|y|)}\chi(\lambda) E^-(\lambda;x,y)\d\lambda\right)\nonumber\\
				+&\frac{i}{4\pi }\left(\int_0^\infty e^{i\lambda(|x|+|y|)}\frac{B_1^{+}(\lambda)}{\lambda} E^+(\lambda;x,y)\d\lambda-\int_0^\infty e^{i\lambda(|x|-|y|)}\frac{B_1^{+}(\lambda)}{\lambda}  E^-(\lambda;x,y)\d\lambda\right)\nonumber\\
				:=&\mathcal{W}_-^{l,1}(x,y)+\mathcal{W}_-^{l,2}(x,y)
			\end{align}
			where 
			$$a_1^{+}=\frac{i}{2}\left(\int\varphi(x)\d x\right)^2\ne 0$$
			and $E^+(\lambda; x,y)$ is given by \eqref{E-pm-503}.
			Under the decay assumption \eqref{eq-decay condition} and the fact $||x-x_1|-|x||\le |x_1|$, we obtain the uniform bound
			\begin{equation}\label{eq: est for E}
				\sup_{x,y}\left| \partial_{\lambda}^{k}E^\pm(\lambda;x,y)\right|\lesssim 1, \quad k=0,1,2.	
			\end{equation}
			
			We first estimate the term $\mathcal{W}_-^{l,1}(x,y)$. When $\big||x|-|y|\big|\le 1$, we  immediately obtain
			\begin{equation}\label{eq-Wl1-0504-2}
				\left|\mathcal{W}_-^{l,1}(x,y) \right|\lesssim 1 \lesssim   \langle |x|\pm |y|\rangle^{-2},
			\end{equation}
			which is an admissible integral kernel. In the region where $\big||x|-|y|\big|>1$, integration by parts gives
			\begin{align}\label{eq-Wl1-0504-1}
				\mathcal{W}_-^{l,1}(x,y)=&\frac{1}{2\pi i}\Big(\frac{1}{|x|+|y|}-\frac{1}{|x|-|y|}\Big) 
				-\frac{1}{ 4a_1^{+}\pi }\frac{1}{|x|+|y|}\int_0^\infty e^{i\lambda(|x|+|y|)} \left[ \chi(\lambda) E^+(\lambda;x,y)\right]'\d\lambda \nonumber\\
				+&\frac{1}{ 4a_1^{+}\pi }\frac{1}{|x|-|y|}\int_0^\infty e^{i\lambda(|x|-|y|)}\left[ \chi(\lambda) E^-(\lambda;x,y)\right]'\d\lambda.\nonumber\\
				:=&\uppercase\expandafter{\romannumeral1}(x,y)+\uppercase\expandafter{\romannumeral2}(x,y)+\uppercase\expandafter{\romannumeral3}(x,y).
			\end{align}
			Applying integration by parts again to $\uppercase\expandafter{\romannumeral2}$ and $\uppercase\expandafter{\romannumeral3}$, and  invoking  \eqref{eq: est for E}, we have 
			\begin{equation}\label{eq-I-123-0504}
				|\uppercase\expandafter{\romannumeral2}(x,y)|+|\uppercase\expandafter{\romannumeral3}(x,y)|\lesssim \langle|x|-|y|\rangle^{-2},
			\end{equation}
			which is admissible. The singular term $\uppercase\expandafter{\romannumeral1}$ corresponds to the kernel of the truncated Hilbert transform, which is known to be bounded on $L^p(\mathbb{R})$ for all $1< p<\infty$ and from $L^1(\R)$ to $L^{1,\infty}(\R)$ (see e.g. in \cite{gra}).

			Next, we deal with the term $\mathcal{W}_-^{l,2}(x,y)$. From \eqref{eq: est for B1} and the uniform bound \eqref{eq: est for E}, we have
			$$\sup_{x,y} \left| \partial_{\lambda}^{k}\left(\lambda^{-1} E^\pm(\lambda;x,y)B_1^{+}(\lambda)\right) \right|\lesssim \lambda^{1-k},\quad k=0,1,2.$$
			This reduces to the scenario that $\int_{\mathbb{R}}{\varphi(x)\d x=0}$. Employing the same method as in the proof of \eqref{L-pm-0501-2} in statement  (i), we obtain the bound
			\begin{equation}\label{eq-W-l2-0504}
				|\mathcal{W}_-^{l,2}(x,y)|\lesssim \langle |x|\pm |y|\rangle^{-\frac 32},   
			\end{equation}
			which is admissible. This finishes the case for $d=1$.
			

			\textbf{Case   $d=2$:}   Rather than explicitly writing the kernel of 
			$\mathcal{W}_-^l(H_\alpha,H_0)$, we note that it can be expressed as the composition
			\begin{equation*}
				\mathcal{W}_-^l(H_\alpha,H_0)=T_\varphi\circ \left(G_+^{\alpha}(\sqrt{-\Delta})\chi(\sqrt{-\Delta})\right),
			\end{equation*}
			where $T_\varphi$ is defined by
			\begin{equation}\label{eq-T-phi-0504}
				T_\varphi=\frac{1}{\pi i}\int_0^\infty \lambda R_0^+(\lambda^2)\varphi\langle (R_0^+(\lambda^2)-R_0^-(\lambda^2))\cdot,\varphi\rangle \d\lambda,
			\end{equation}
			and $G_+^{\alpha}(\sqrt{-\Delta})\chi(\sqrt{-\Delta})$ is defined via the Fourier multiplier $G_+^{\alpha}(|\xi|)\chi(|\xi|)$.
			The results follow directly  by combining the following Lemma \ref{lem-T-phi-0504}  and \ref{lem-Fou-multiplier}, where the boundedness of both  $T_\varphi$  and $G_+^{\alpha}(\sqrt{-\Delta})\chi(\sqrt{-\Delta})$ is established.
		\end{proof}
		
		\begin{lemma}\label{lem-T-phi-0504} 
			The operator $T_\varphi$, defined in \eqref{eq-T-phi-0504}, is bounded on $L^p(\mathbb{R}^2)$ for $1<p<\infty$ and maps $L^1(\mathbb{R}^2)$ to the weak $L^1$ space  $L^{1,\infty}(\mathbb{R}^2)$.
		\end{lemma}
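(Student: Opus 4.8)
The plan is to split $T_\varphi=T_\varphi^l+T_\varphi^h$ according to the cut-off $\chi$ of \eqref{eq-cut-off function} (so $T_\varphi^l$ carries $\chi(\lambda)$ and $T_\varphi^h$ carries $\widetilde\chi(\lambda)$), and to estimate the two integral kernels separately. Since $\varphi$ is real and the free resolvent kernels are symmetric, $\langle(R_0^+-R_0^-)(\lambda^2)f,\varphi\rangle=\int f(y)\,\big[(R_0^+-R_0^-)(\lambda^2)\varphi\big](y)\,\d y$, so in either case the kernel has the separated form
\[
\frac{1}{\pi i}\int_0^\infty\lambda\,\varrho(\lambda)\,\big(R_0^+(\lambda^2)\varphi\big)(x)\,\big[(R_0^+-R_0^-)(\lambda^2)\varphi\big](y)\,\d\lambda,\qquad\varrho\in\{\chi,\widetilde\chi\}.
\]

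\textbf{High energy.} For $T_\varphi^h$ I would show the kernel is admissible, so that $T_\varphi^h\in\mathcal B(L^p(\R^2))$ for all $1\le p\le\infty$. This is the $d=2$ analogue of Proposition \ref{pro-high energy-d>3}: insert the large-argument representations \eqref{eq-free kernel-3}--\eqref{eq-w estiamte} and \eqref{eq:expansion for free-5} (and the small-argument expansion \eqref{eq: exp for reso d=2} on the region where the argument is $\lesssim1$, where $\log\lambda$ is harmless since $\lambda\gtrsim\lambda_0$), integrate by parts once in each of $x_1,x_2$ via the operators \eqref{eq-differential operators}--\eqref{eq-differential operators-dual} --- this is where the hypothesis $\varphi\in C^{[d/2]}$ together with \eqref{eq-smoothness condition} is used, the absence of $G_+^\alpha$ being only favourable --- and integrate by parts in $\lambda$ off the stationary set $\{\lambda|\rho_\pm|\lesssim1\}$, $\rho_\pm=|x-x_1|\pm|x_2-y|$. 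The resulting pointwise bound combined with Lemma \ref{lemma-GV} and the polar-coordinate argument of \eqref{eq-I-4-0705} gives admissibility.

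\textbf{Low energy.} The kernel $T_\varphi^l(x,y)$ is the substantive part and the source of the dichotomy. I would split each free resolvent further by the cut-offs $\eta(\lambda|x-x_1|)$, $\eta(\lambda|x_2-y|)$ of \eqref{eq-cut-off function eta}. On every piece in which at least one argument is $\gtrsim1$, the large-argument expansions together with the oscillatory estimate of Lemma \ref{oscillatory estimates} (and the $\lambda$-integration-by-parts scheme of Proposition \ref{pro-high energy-d>3}) yield admissible kernels. On the remaining piece, where both $\lambda|x-x_1|$ and $\lambda|x_2-y|$ are $\lesssim1$, I would use the small-argument expansion \eqref{eq: exp for reso d=2} for $R_0^+$ and $(R_0^+-R_0^-)(\lambda^2;x_2,y)=c_2J_0(\lambda|x_2-y|)=c_2+O((\lambda|x_2-y|)^2)$. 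Because $c_\varphi:=\int_{\R^2}\varphi\,\d x\neq0$, the term $-\tfrac1{2\pi}(\log\lambda)c_\varphi$ of $R_0^+$ does not cancel; combining it with $-\tfrac1{2\pi}\int\log|x-x_1|\,\varphi(x_1)\,\d x_1\sim-\tfrac1{2\pi}(\log|x|)c_\varphi$ and reassembling with the matching contributions from the mixed regions, one isolates a single threshold operator $S$ whose kernel, for $|x|,|y|\gtrsim1$ (the region $|x|\lesssim1$ or $|y|\lesssim1$ being admissible), is controlled schematically by $|S(x,y)|\lesssim(|x||y|)^{-1/2}\langle|x|-|y|\rangle^{-1}$, modulo possible logarithmic gains, while all remaining contributions are admissible.

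\textbf{The model operator and the main obstacle.} Since the kernel of $S$ depends only on $|x|$ and $|y|$, H\"older's inequality on angular averages reduces its $L^p(\R^2)\to L^p(\R^2)$ boundedness ($1<p<\infty$) and its $L^1(\R^2)\to L^{1,\infty}(\R^2)$ boundedness to the corresponding bounds on $L^p((0,\infty),r\,\d r)$ for the induced one-dimensional operator, which is a weighted truncated Hilbert transform; these follow from classical Calder\'on--Zygmund theory, exactly as for the truncated Hilbert transform appearing in the $d=1$ argument around \eqref{eq-Wl1-0504-1} and as in \cite{EGG18,gra}. The step I expect to be the main obstacle is this low-energy analysis: in $d=2$ the logarithmic divergence of $R_0^\pm(\lambda^2)$ at the threshold makes the surviving operator have only the borderline decay $\langle|x|-|y|\rangle^{-1}$, so no purely absolute (Schur-type) estimate on $|T_\varphi^l(x,y)|$ can succeed; one must organise the various small/large-argument pieces so that the genuine singular-integral structure of $S$ is extracted and every remainder decays strictly faster --- in particular keeping careful track of the cancellations between the $\log\lambda$, $\log|x|$, $\log|y|$ and constant contributions, which individually need not produce admissible kernels. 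Once this is carried out, combining Lemma \ref{lem-T-phi-0504} with Lemma \ref{lem-Fou-multiplier} and the factorisation $\mathcal W_-^l(H_\alpha,H_0)=T_\varphi\circ\big(G_+^\alpha(\sqrt{-\Delta})\chi(\sqrt{-\Delta})\big)$ completes part (ii) of Proposition \ref{theorem-low energy-d=1,2} in dimension two.
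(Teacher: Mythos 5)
Your high-energy step and the overall splitting are plausible, but the heart of the lemma is the low-energy/threshold part, and there your proposal stops exactly where the proof has to begin: the extraction of the model operator $S$ and the bookkeeping of the $\log\lambda$, $\log|x|$, $\log|y|$ cancellations are flagged as ``the main obstacle'' rather than carried out, so the decisive step is missing. More seriously, the model you propose to extract is quantitatively too weak to yield the conclusion: a kernel controlled only by $(|x||y|)^{-1/2}\langle |x|-|y|\rangle^{-1}$ (with principal-value structure only near the diagonal) does not give $L^1\to L^{1,\infty}$ on $\R^2$. Applying such a kernel to an $L^1$-normalized bump supported near $|y|\sim\rho_0$ produces a function of size $\rho_0^{-1/2}|x|^{-3/2}$ for $|x|\gg\rho_0$, and $|x|^{-3/2}\notin L^{1,\infty}(\R^2)$; equivalently, the symmetric-weight operator with kernel $(|x||y|)^{-1/2}\,PV\,(|x|-|y|)^{-1}$ is \emph{not} weak $(1,1)$, so the appeal to ``classical Calder\'on--Zygmund theory'' for the induced radial operator cannot close the argument from the stated bound. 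What makes the lemma true is a sharper, asymmetric structure: the kernel of $T_\varphi$ is (up to the harmless mollification by $\varphi\otimes\varphi$) exactly $\frac{1}{2|x|}\bigl(PV\frac{1}{|x|-|y|}+\frac{1}{|x|+|y|}\bigr)$ plus a multiple of the surface measure $\delta(|x|^2-|y|^2)$. The weight $|x|^{-1}$ sits on the output side only; it is precisely what turns the two-dimensional weak-$L^1$ problem in the measure $r\,\d r$ into the one-dimensional Hilbert transform acting on the radial mass, and what makes the $\frac{1}{|x|(|x|+|y|)}$ part harmless via the pointwise bound $\frac{1}{2|x|^2}\|f\|_{L^1}$ and $|x|^{-2}\in L^{1,\infty}(\R^2)$. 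Your symmetric weight $(|x||y|)^{-1/2}$ loses exactly this feature, so the deferred cancellation analysis is not a technical afterthought: it is where the asymmetric weight must be produced, and without it the scheme fails in the off-diagonal regimes $|x|\gg|y|$ and $|y|\gg|x|$.

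For comparison, the paper's proof avoids resolvent expansions and stationary phase altogether: it computes the kernel of $T_\varphi$ (the full operator, with no cutoff and no $G^\alpha_+$) in closed form by writing $\frac{\lambda}{\pi i}(R_0^+-R_0^-)$ as the spectral measure of $\sqrt{-\Delta}$, representing $(|\xi|^2-|\eta|^2-i\varepsilon)^{-1}$ as a time integral, and using the distributional Fourier transform of the Heaviside function, which yields the exact kernel displayed above in one stroke; the lemma then reduces to the $L^p$ and weak $(1,1)$ bounds for the Hilbert transform in the radial variable plus a bounded spherical averaging operator. If you want to salvage your route, you would have to prove (not merely conjecture) that after all cancellations the low-energy kernel is dominated by, and structurally behaves like, $\frac{1}{|x|}\bigl(PV\frac{1}{|x|-|y|}+\frac{1}{|x|+|y|}\bigr)$ modulo an admissible remainder, which is essentially equivalent to reproducing the exact computation.
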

		\begin{proof}
			We first note that for any Schwartz function $f$, 
			\begin{equation}\label{eq-T-phi-5-5}
				\widehat{T_\varphi f}(\xi)=\lim_{\varepsilon\downarrow0} \frac{1}{\pi i}\int_0^\infty \frac{\widehat{\varphi}(\xi)}{|\xi|^2-\lambda^2-i\varepsilon} \big\langle (R_0^+(\lambda^2)-R_0^-(\lambda^2)) f,\, \varphi \big\rangle \lambda \d\lambda,  
			\end{equation}
			and observe that the spectral measure of the Laplacian in $\R^2$ can be written as  
			$$ 
			\frac{\lambda}{\pi i} \big\langle R_0^+(\lambda^2)-R_0^-(\lambda^2)) f, \, \varphi \big\rangle  =  \int_{|\eta|=\lambda} \widehat{f}(\eta)\bar{\widehat{\varphi}}(\eta)\d\sigma_{\lambda},
			$$
			where $\d\sigma_{\lambda}$ denotes the surface measure of the sphere $\{\eta\in\R^2,\, |\eta|=\lambda\}$.
			Inserting this into \eqref{eq-T-phi-5-5} and taking the inverse Fourier transform, then the integral kernel of $T_\varphi$ has the form
			\begin{equation}\label{rep-for-0-s-1}
				T_\varphi(x, y)=\frac{1}{(2\pi)^4}
				\lim_{\varepsilon\downarrow0}\int\int\int \int\frac{1}{|\xi|^2-|\eta|^2-i\varepsilon}e^{i\xi\cdot(x-w)}\varphi(w)\,  \varphi(z)e^{i\eta\cdot(z-y)} \d\xi \d w  \d\eta \d z. 
			\end{equation}
			Since for any $\varepsilon>0$,
			$$\frac{i}{|\xi|^2-|\eta|^2-i\varepsilon}=\int_{0}^{\infty}e^{-it\left( |\xi|^2-|\eta|^2-i\varepsilon\right)}\d t.$$
			Combining this with \eqref{rep-for-0-s-1} yields 
			\begin{equation}\label{rep-for-0-s-2}
				T_\varphi(x, y)=\frac{1}{(4\pi)^2}
				\lim_{\varepsilon\downarrow0}\int_{\mathbb{R}^2}\int_{\mathbb{R}^2} \int_0^{\infty} e^{i\frac{|x-w|^2-|z-y|^2}{4t}-\varepsilon t} \varphi(w) \varphi(z) t^{-2}\d t \d w\d z. 
			\end{equation}
			Here we have used the identity
			$$\left( e^{-it|\xi|^{2}}\right)^{\vee}(x)=(4\pi i t)^{-1}\, e^{i\frac{|x|^2}{4t}}. $$
			Let $H(x)$ denote the Heaviside function, i.e., $H(x)=1,$ when $x\ge 0$ and $H(x)=0$, when $x<0$. It is well known that
			$$
			\hat{H}(\xi)=-iPV\frac{1}{\xi}+\pi \delta
			$$
			holds in the sense of distribution,
			where $PV\frac{1}{\xi}$ denotes the homogeneous distribution of degree $-1$.  Thus, applying the change of variables $s=\frac1t$ in \eqref{rep-for-0-s-2} leads to 
			$$T_\varphi(x, y)=\frac{1}{(4\pi)^2}
			PV\int_{\mathbb{R}^2}\int_{\mathbb{R}^2} \left( \frac{1}{|x-w|^2-|z-y|^2}+\pi \delta(|x-w|^{2}-|z-y|^2)\right) \varphi(w) \varphi(z)\d w\d z.$$
			Since the spaces $L^1$, $L^{1,\infty}$ are translation invariant,
			it suffices to consider the integral kernel
			$$PV\frac{1}{|x|^2-|y|^2}+\pi\delta(|x|^2-|y|^2)$$ 
			For each fixed  $x$, $\delta(|x|^2-|y|^2)$ corresponds to the surface measure of the sphere $|y|=|x|$, scaled by $\frac{1}{2|x|}$.
			Specifically,
			$$\int\delta(|x|^2-|y|^2)f(y)\d y=\frac{1}{2|x|}\int_{|y|=|x|}f(y)\d y.$$
			Consequently, the averaging operator with  kernel $\delta(|x|^2-|y|^2)$ is bounded on $L^p$ for $1\le p \le \infty$.
			Let $\mathbf{T}$ be the operator associated with the integral kernel $PV\frac{1}{|x|^2-|y|^2}$, then we have 
			\begin{align*}
				(\mathbf{T}f)(x)=PV\int \frac{1}{|x|^2-|y|^2} f(y)\d y&= \frac{1}{2|x|}	PV\int \frac{1}{|x|-|y|} f(y)\d y+\frac{1}{2|x|}\int \frac{1}{|x|+|y|} f(y)\d y\\
				&:=  (\mathbf{T_1}f)(x)+(\mathbf{T_2}f)(x)
			\end{align*}
			The second term satisfies
			\begin{equation}
				\left|(\mathbf{T_2}f)(x)\right|	=\frac{1}{2|x|}\left|\int \frac{1}{|x|+|y|} f(y)\d y\right|\le \frac{1}{2|x|^2}\left\|f \right\|_{L^1} .	
			\end{equation}
			This implies
			\begin{equation}\label{eq-weak-1-6-23}
				\|\mathbf{T_2}f\|_{L^{1,\infty}}	\lesssim \left\|f\right\|_{L^1}.
			\end{equation}
			On the other hand, observe that $(\mathbf{T_1}f)(x):=g(r)$ (where $r=|x|$) is radial, Its norm admits  the equivalence
			$$
			\left\| \mathbf{T_1}f\right\|_{L^{1,\infty}(\mathbb{R}^2)}=2\pi \left\| g(r)\right\|_{L^{1,\infty}(\mathbb{R}^+, r\d r)}.
			$$
			Expressing $\mathbf{T_1}f$ in polar coordinates gives
			$$
			g(r)=\frac{1}{2r}\int_0^{\infty} \frac{1}{r-\rho} \left(\int_{\mathbf{S}^1} f(\rho\theta) \d\theta\right) \rho\d\rho,
			$$
			Applying  the $L^1-L^{1,\infty}$ boundedness of the Hilbert transform, we obtain
			\begin{equation*}
				\left\| g(r)\right\|_{L^{1,\infty}(\mathbb{R}^+, r\d r)}	=\pi \left\|	PV\int \frac{1}{r-\rho} \int_{\mathbf{S}^1} f(\rho\theta) \d\theta \rho\d\rho \right\|_{L^{1,\infty}(\mathbb{R}^+, dr)} 
				\lesssim \left\|f\right\|_{L^1},
			\end{equation*} 
			Combining these estimates for $\mathbf{T_1}f$, we derive that
			\begin{equation}\label{eq-weak-1-6-23-2}
				\|\mathbf{T_1}f\|_{L^{1,\infty}}	\lesssim \left\|f\right\|_{L^1}.
			\end{equation}
			\eqref{eq-weak-1-6-23} and \eqref{eq-weak-1-6-23-2} yield weak type $(1, 1)$ boundedness for $\mathbf{T}$.  
			Similarly, we obtain that  $\mathbf{T}$  is bounded on $L^p(\R^2)$ for $1<p<\infty$. Therefore We  have established the desired result of $\mathcal{W}_-^l(H_\alpha,H_0)$.
		\end{proof}

		We now examine the boundedness of $G_+^{\alpha}(\sqrt{-\Delta})\chi(\sqrt{-\Delta})$ through a more general framework. Consider the Fourier multiplier operator defined by
		\begin{equation}\label{eq-def-fou-mul}
			\mathbf{M}_af=\mathcal{F}^{-1}\big(\psi(\cdot)\chi(|\cdot|)\hat{f}(\cdot)\big), \quad f\in \mathscr{S}(\mathbb{R}^d).
		\end{equation}
		where the symbol $\psi(\xi) \in C^{d+1}$  satisfies 
		\begin{equation}\label{eq-psi-0504}
			\left|\partial^{\gamma} \psi(\xi) \right|\lesssim 
			\begin{cases}
				|\log|\xi||^{-a},& \quad  |\gamma|=0,\\
				|\log|\xi||^{-a-1}|\xi|^{-|\gamma|},& \quad  0<|\gamma|\le d+1,\\
			\end{cases}
		\end{equation}
		for a constant $a \in \mathbb{R}$. 
		Note that $G_+^{\alpha}(\lambda)$ corresponds to the special case where $\psi(\lambda)=\frac{\alpha}{1+\alpha F^{+}(\lambda^2)}$. Moreover, from \eqref{eq: expansion for G when d=2} and \eqref{eq: est for B2}, this particular $\psi$ satisfies condition \eqref{eq-psi-0504} with $a=1$. 
		We note that  Lemma \ref{lem: Fourier Multiplier by Bernstein}  cannot be applied directly to conclude that    $\psi \in M_1(\mathbb{R}^d)$, because $\psi$ fails to satisfy \eqref{eq:condition for FM} (note that \eqref{eq-psi-0504} only provides additional logarithmic decay). Nevertheless, we can still derive the desired estimate through a direct computation. More precisely,  we have
		
		\begin{lemma}\label{lem-Fou-multiplier} 
			Let $\mathbf{M}_a$ be defined in \eqref{eq-def-fou-mul} and \eqref{eq-psi-0504}	hold with  $a> \frac1d$. Then $\mathbf{M}_a$ is bounded on $L^p(\mathbb{R}^d)$ for all $1\le p\le \infty$.
		\end{lemma}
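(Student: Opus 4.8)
The operator $\mathbf{M}_a$ is convolution with the kernel $K:=\mathcal{F}^{-1}\bigl(\psi\,\chi(|\cdot|)\bigr)$, so the plan is to prove that $K\in L^1(\R^d)$; once this is known, Young's inequality gives $\|\mathbf{M}_a f\|_{L^p}\le\|K\|_{L^1}\|f\|_{L^p}$ for every $1\le p\le\infty$ at once. (For $1<p<\infty$ the conclusion is in any case immediate from the Mihlin--H\"ormander theorem, since $|\partial^\gamma(\psi\chi)|\lesssim|\xi|^{-|\gamma|}$; the content is the endpoints $p=1,\infty$.) Since $\psi\chi\in L^\infty$ has compact support, $K$ is bounded and continuous, so $\|K\|_{L^1(|x|\le1)}\lesssim\|\psi\chi\|_{L^1}\lesssim1$, and everything reduces to the decay of $K$ on $|x|\ge1$.

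For that I would decompose the symbol dyadically near the origin. Fix $\phi\in C_c^\infty(\R^d)$ supported in $\{1/2\le|\xi|\le2\}$ with $\sum_{j\in\Z}\phi(2^j\xi)=1$ for $\xi\ne0$, set $\phi_j:=\phi(2^j\cdot)$, and for $j_0$ large enough write $\psi\chi=\sum_{j\ge j_0}\psi\phi_j+R$ with $R:=\psi\chi\bigl(1-\sum_{j\ge j_0}\phi_j\bigr)$. Because $\phi$ is supported away from $0$, the tail sums $\Theta_j:=\sum_{k>j}\phi_k$ are smooth and identically $1$ near $\xi=0$, hence $R$ is $C^{d+1}$, compactly supported, and supported in an annulus bounded away from $0$, so $\mathcal{F}^{-1}R$ decays like $|x|^{-(d+1)}$ and lies in $L^1(\R^d)$. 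For the pieces $m_j:=\psi\phi_j$, a routine Cauchy--Schwarz on $|x|\lesssim2^j$ together with $(d{+}1)$ integrations by parts on $|x|\gtrsim2^j$ (using $|\partial^\gamma m_j|\lesssim j^{-a}2^{j|\gamma|}$ for $|\gamma|\le d+1$) yields $\|\mathcal{F}^{-1}m_j\|_{L^1}\lesssim\sup_{|\xi|\sim2^{-j}}|\psi|\lesssim j^{-a}$. Summing these termwise is wasteful ($\sum_j j^{-a}$ converges only for $a>1$), so the gain must come from the \emph{extra} logarithmic decay that \eqref{eq-psi-0504} places on the derivatives of $\psi$.

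Concretely, on the support of $\phi_j$ the function $\psi$ oscillates by only $O(j^{-a-1})$, so I would split $\psi=c_j+\widetilde{\psi}_j$ there, with $c_j:=\psi(2^{-j}e_1)$, $|c_j|\lesssim j^{-a}$ and $|\partial^\gamma\widetilde{\psi}_j|\lesssim j^{-a-1}2^{j|\gamma|}$ for $|\gamma|\le d+1$. The pieces $\widetilde{\psi}_j\phi_j$ then satisfy $\|\mathcal{F}^{-1}(\widetilde{\psi}_j\phi_j)\|_{L^1}\lesssim j^{-a-1}$, which is summable. For the principal pieces $c_j\phi_j$ one sums \emph{by parts} rather than termwise: writing $\phi_j=\Theta_{j-1}-\Theta_j$, using $|c_{j+1}-c_j|\lesssim j^{-a-1}$ and $c_j\to0$, and noting that scale invariance $\Theta_j(\xi)=\Theta_0(2^j\xi)$ forces $\|\mathcal{F}^{-1}\Theta_j\|_{L^1}=\|\mathcal{F}^{-1}\Theta_0\|_{L^1}$ to be a finite $j$-independent constant, one obtains $\sum_{j\ge j_0}c_j\phi_j=c_{j_0}\Theta_{j_0-1}+\sum_{j\ge j_0}(c_{j+1}-c_j)\Theta_j$ with $\bigl\|\mathcal{F}^{-1}\bigl(\sum_j c_j\phi_j\bigr)\bigr\|_{L^1}\lesssim|c_{j_0}|+\sum_j j^{-a-1}<\infty$. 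Collecting the three contributions gives $K\in L^1(\R^d)$, hence $\mathbf{M}_a\in\mathcal{B}(L^p)$ for all $1\le p\le\infty$.

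The step I expect to be the genuine obstacle is exactly this endpoint summation: a naive bound on the individual $L^1$-norms $\|\mathcal{F}^{-1}m_j\|_{L^1}$ is too lossy, and one must use the slow variation of $\psi$ — equivalently the $|\log|\xi||^{-1}$ improvement on its first derivatives in \eqref{eq-psi-0504} — through the telescoping device above. A minor but necessary technical point is to arrange, via the support condition on $\phi$, that the leftover term $R$ coming from the dyadic cutoff does not carry back a $\log$-free singularity at $\xi=0$, which is why the tail sums $\Theta_j$ are required to be smooth and equal to $1$ near the origin.
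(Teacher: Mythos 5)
Your argument is correct in substance but follows a genuinely different route from the paper. The paper performs no dyadic decomposition: for each fixed $|x|\ge 100$ it splits the symbol once, at the $x$-dependent radius $r_0=|x|^{-1}\big|\log|x|\big|^{-1/d}$, bounds the inner piece by its size $|\log r_0|^{-a}r_0^{d}$, and treats the outer piece by $d+1$ integrations by parts, arriving at the pointwise bound $\big|\mathcal{F}^{-1}(\psi\chi)(x)\big|\lesssim |x|^{-d}\big|\log|x|\big|^{-a-1+\frac1d}$; this is integrable at infinity precisely because $a>\frac1d$, and then the conclusion follows from $K\in L^1$ exactly as in your first paragraph. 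Your Littlewood--Paley decomposition with Abel summation replaces the pointwise kernel estimate by an $\ell^1$ summation of dyadic pieces: the oscillation bound $|\psi-c_j|\lesssim j^{-a-1}$ on each annulus, the estimate $\|\mathcal{F}^{-1}(\widetilde{\psi}_j\phi_j)\|_{L^1}\lesssim j^{-a-1}$, and the scale invariance $\|\mathcal{F}^{-1}\Theta_j\|_{L^1}=\|\mathcal{F}^{-1}\Theta_0\|_{L^1}$ are all used correctly, and for $d\ge2$ your argument actually proves the lemma under the weaker hypothesis $a>0$. So your approach buys a sharper threshold (at the cost of losing the explicit pointwise decay of $K$ that the paper's method produces), while the paper's approach is shorter and gives the quantitative kernel bound.

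One concrete caveat: for $d=1$ the support of $\phi_j$ has two connected components, and the single constant $c_j=\psi(2^{-j}e_1)$ does not control $\psi$ on the negative component, since \eqref{eq-psi-0504} is not assumed even/radial; for an odd-type symbol such as $\psi(\xi)=\mathrm{sgn}(\xi)\,|\log|\xi||^{-a}$ near the origin one has $|\psi(\xi)-c_j|\sim j^{-a}$ there, not $j^{-a-1}$, so the bound you claim for $\widetilde{\psi}_j$ at order $|\gamma|=0$ fails. Nor can you simply telescope each half-line separately, because the one-sided tails $\mathbf{1}_{\pm\xi>0}\Theta_j$ jump at the origin and are not in $\mathcal{F}L^1$ (and indeed for such odd symbols with $a\le1$ the multiplier is not in $M_1(\mathbb{R})$, so some restriction is unavoidable). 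Under the lemma's hypothesis this is harmless: for $d=1$ one has $a>\frac1d=1$, and your own ``wasteful'' termwise bound $\|\mathcal{F}^{-1}(\psi\phi_j)\|_{L^1}\lesssim j^{-a}$ is then summable, so the proof closes; but you should state that the refined telescoping step is used only for $d\ge2$ (or for radial symbols). A second, minor point: in the decomposition $\psi\chi=\sum_{j\ge j_0}\psi\phi_j+R$ you implicitly use that $\chi\equiv1$ on the support of $\phi_j$ for $j\ge j_0$; this is exactly why $j_0$ must be taken large, and it deserves a sentence.
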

		\begin{proof}
			The boundedness follows if we prove that
			\begin{equation}\label{eq-fou-L1}
				\mathcal{F}^{-1}\big(\psi(\cdot)\chi(|\cdot|)\big)(x)\in L^1(\mathbb{R}^d),
			\end{equation}
			which we establish via the stationary phase method.
			
			For $|x|\geq 100$, set $r_0=|x|^{-1}\big|\log|x|\big|^{-\frac{1}{d}}$ and decompose
			\[
			\psi(\xi)\chi(|\xi|) = \psi(\xi)\chi(|\xi|)\chi(|\xi|/r_0) + \psi(\xi)\chi(|\xi|)(1-\chi(|\xi|/r_0)).
			\]
			Applying \eqref{eq-psi-0504} with $ |\gamma|=0$ and using a direct size estimate, we obtain
			\[
			\left|\big(\psi(\xi)\chi(|\xi|)\chi(|\xi|/r_0)\big)^{\vee}(x)\right| \leq |\log r_0|^{-a}r_0^{d}.
			\]
			For the second term,  integration by parts gives
			\begin{align*}
				&\big[\psi(\xi)\chi(|\xi|)(1-\chi(|\xi|/r_0))\big]^{\vee}(x) \\
				&= \frac{i}{(2\pi)^d}\int e^{ix\cdot\xi}\left[\frac{x}{|x|^2}\cdot\nabla(\psi\chi)(1-\chi(r_0^{-1}|\xi|)) + \frac{x}{|x|^2}\cdot(\psi\chi)\nabla\chi(r_0^{-1}|\xi|)\right]d\xi \\
				&= I + II.
			\end{align*}
			Since $\big|\nabla\chi(|\xi|/r_0)\big|\lesssim r_0^{-1}$ on the region $\{|\xi|\sim r_0\}$, we obtain
			\[
			|II| \lesssim \big|\log r_0\big|^{-a}|x|^{-1}r_0^{d-1}.
			\]
			Moreover, since
			\[
			\sup_{\xi}\big|\partial^\gamma\big[\frac{x}{|x|^2}\cdot\nabla(\psi\chi)(1-\chi(r_0^{-1}|\xi|))\big]\big| \lesssim \big|\log|\xi|\big|^{-a-1}|\xi|^{-1-|\gamma|},
			\]
			performing $d$ additional integrations by parts yields
			\[
			|I| \lesssim \big|\log r_0\big|^{-a-1}|x|^{-(d+1)}r_0^{-1}.
			\]
			Combining these with $\big|\log r_0\big|\geq \log|x|$ for $|x|\geq 100$ gives
			\begin{align*}
				\big|\big(\psi(\xi)\chi(|\xi|)\big)^{\vee}(x)\big| &\lesssim \big|\log r_0\big|^{-a-1}|x|^{-(d+1)}r_0^{-1} + \big|\log r_0\big|^{-a}|x|^{-1}r_0^{d-1} + \big|\log r_0\big|^{-a}r_0^{d} \\
				&\lesssim \big|\log|x|\big|^{-a-1+\frac{1}{d}}|x|^{-d}.
			\end{align*}
			Since $a>\frac{1}{d}$, this implies \eqref{eq-fou-L1}. The proof is complete.
		\end{proof}

		\subsection{The high energy contribution}
		\begin{proposition}\label{theorem-high energy-d=1,2}
			Let $d=1, 2$  and assume that $\varphi$ satisfies \textbf{Condition} $H_\alpha.$ Then the high energy operator $\mathcal{W}_-^h(H_\alpha,H_0)$ (defined in \eqref{eq-high energy wave operator})  is bounded on $L^p(\mathbb{R}^d)$ for all $1\le p\le\infty.$ 
		\end{proposition}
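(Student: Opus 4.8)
The plan is to show, exactly as for $d\ge 3$ in Proposition \ref{pro-high energy-d>3}, that the integral kernel of $\mathcal{W}_-^h(H_\alpha,H_0)$ satisfies the admissibility condition \eqref{eq-admissible condition}, which yields boundedness on $L^p(\mathbb{R}^d)$ for all $1\le p\le\infty$ in one stroke; as usual it is enough to treat $\mathcal{W}_-$, since $W_-f=\overline{W_+\bar f}$. The three ingredients are: Lemma \ref{lem:est for high G}, which provides $|G_+^{\alpha}(\lambda)|\lesssim 1$ and $|\partial_\lambda^k G_+^{\alpha}(\lambda)|\lesssim\lambda^{-1}$ for $1\le k\le\big[\tfrac{d+3}{2}\big]$ on the support of $\widetilde{\chi}$; the free-resolvent representations (\eqref{eq2.1} for $d=1$, and \eqref{eq-free kernel-3}, \eqref{eq:expansion for free-5} together with the small-argument expansion \eqref{eq-epansion-small-argument} for $d=2$); and the ``split and integrate by parts in $\lambda$'' device already used for $\widehat{\Lambda}_1+\widehat{\Lambda}_2$ in the proof of Proposition \ref{pro-high energy-d>3}. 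In each resulting term the phase is $e^{i\lambda\rho_\pm}$ with $\rho_\pm=|x-x_1|\pm|x_2-y|$, and the aim is a pointwise bound of the inner $\lambda$-integral by $\langle\log|\rho_\pm|\rangle\,\langle\rho_\pm\rangle^{-N}\,\mathfrak{w}$, where $N=\big[\tfrac{d+3}{2}\big]$ and $\mathfrak{w}=\mathfrak{w}(x,x_1,x_2,y)$ is an integrable spatial weight built from $\varphi$ and, when $d=2$, also from $\nabla\varphi$.

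\textbf{Dimension $d=1$.} Inserting \eqref{eq2.1} into \eqref{eq-high energy wave operator}, the two factors $\tfrac{\pm i}{2\lambda}$ cancel the $\lambda$ from the spectral measure, so the $\lambda$-integral has amplitude $\lambda^{-1}G_+^{\alpha}(\lambda)\widetilde{\chi}(\lambda)$, whose $k$-th derivative is $\lesssim\lambda^{-1-k}$ for $0\le k\le 2=\big[\tfrac{d+3}{2}\big]$ by Lemma \ref{lem:est for high G}. Splitting with $\chi(|\rho_\pm|\lambda)$ and its complement, a crude bound on the first piece gives $\lesssim\int_{\lambda_0/2}^{|\rho_\pm|^{-1}}\lambda^{-1}\,\d\lambda\lesssim\langle\log|\rho_\pm|\rangle$ (this piece being empty once $|\rho_\pm|^{-1}\le\lambda_0/2$, since $\widetilde{\chi}$ vanishes there), while two integrations by parts on the second piece give $\lesssim|\rho_\pm|^{-2}$. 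Hence the $\lambda$-integral is $\lesssim\langle\log|\rho_\pm|\rangle\langle\rho_\pm\rangle^{-2}$, and since $\varphi\in L^1(\mathbb{R})$ by \eqref{eq-decay condition} and $\int_{\mathbb{R}}\langle\log|t|\rangle\langle t\rangle^{-2}\,\d t<\infty$, Fubini's theorem — integrating first in $x$ or $y$ after the substitution collapsing $|x-x_1|\pm|x_2-y|$ to a single variable — shows the kernel is admissible. Note no smoothness of $\varphi$ is needed here, consistently with Condition $H_\alpha$.

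\textbf{Dimension $d=2$.} The point is that the two-dimensional resolvent decays only like $\lambda^{-1/2}$, so after using \eqref{eq-free kernel-3} for $R_0^+(\lambda^2;x,x_1)$ and, according to whether $\lambda|x_2-y|\gtrless\tfrac12$, either \eqref{eq:expansion for free-5} or the expansion of $\tfrac i2 J_0(\lambda|x_2-y|)$ for $(R_0^+-R_0^-)(\lambda^2;x_2,y)$, the $\lambda$-amplitude is merely of size $O(1)$, and the crude estimate above would only produce $|\rho_\pm|^{-1}$, which is not locally integrable against the remaining weights. To recover an extra $\lambda^{-1}$ I would integrate by parts once in $x_1$ with the operators $L_{x_1},L_{x_1}^*$ of \eqref{eq-differential operators}--\eqref{eq-differential operators-dual}; this is legitimate because Condition $H_\alpha$ forces $\varphi\in C^{[d/2]}=C^1$ with $|\partial^\beta\varphi|\lesssim\langle x\rangle^{-\delta}$, $\delta>4$, so no boundary terms appear. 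After this the amplitude obeys $|\partial_\lambda^k(\cdot)|\lesssim\lambda^{-1-k}\mathfrak{w}$ for $0\le k\le 2$, with $\mathfrak{w}$ a combination of $|x-x_1|^{-3/2}|\varphi(x_1)|\,|x_2-y|^{-1/2}|\varphi(x_2)|$ and $|x-x_1|^{-1/2}|\nabla\varphi(x_1)|\,|x_2-y|^{-1/2}|\varphi(x_2)|$ (in the small-argument regime of $x_2$ the phase reduces to $e^{i\lambda|x-x_1|}$, the factor $|x_2-y|^{-1/2}$ disappears, and $|x_2-y|\lesssim1$ on the support of the cut-off). The same split-and-IBP argument in $\lambda$ then delivers the bound $\langle\log|\rho_\pm|\rangle\langle\rho_\pm\rangle^{-2}\mathfrak{w}$. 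To conclude I would integrate $\mathfrak{w}\,\langle\log|\rho_\pm|\rangle\langle\rho_\pm\rangle^{-2}$ over $x_1,x_2$ and then over $x$ (resp. $y$): carrying out the $x$- (resp. $y$-) integration first in polar coordinates about $x_1$ (resp. $x_2$) reduces matters to one-dimensional integrals of the form $\int_0^\infty\langle\log|r\pm a|\rangle\langle r\pm a\rangle^{-2}r^{-1/2}\,\d r$ and $\int_0^\infty\langle\log|r\pm a|\rangle\langle r\pm a\rangle^{-2}r^{1/2}\,\d r$, which are bounded respectively by a constant and by $\langle a\rangle^{1/2}$ uniformly for $a\ge 0$; the remaining integrals in $\varphi,\nabla\varphi$ then converge by \eqref{eq-decay condition}, \eqref{eq-smoothness condition} and Lemma \ref{lemma-GV}, and the kernel is admissible.

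\textbf{Expected main obstacle.} The delicate point, present only in dimension two, is the interplay between the logarithmic singularity of the $\lambda$-integral at $\rho_\pm=0$ and the singularity $|x-x_1|^{-3/2}$ created by the $x_1$-integration by parts: one has to integrate in the correct order — first the spatial variable absorbing the $\rho_\pm$-singularity, then the one meeting the $|x-x_1|^{-3/2}$ weight — to obtain a bound uniform in the remaining free variable, and one must carefully track the boundary contributions coming from $\widetilde{\chi}$ and from the splitting cut-off $\chi(|\rho_\pm|\lambda)$, distinguishing the ranges where $|\rho_\pm|$ is large from where it is bounded. In contrast with the low-energy analysis there is no zero-energy singularity to cope with here, which is precisely why both endpoints $p=1$ and $p=\infty$ cause no difficulty for the high-energy part.
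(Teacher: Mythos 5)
Your proposal is correct and follows essentially the same route as the paper's proof: reduce to admissibility of the kernel, use Lemma \ref{lem:est for high G} for $G_+^{\alpha}$, split the $\lambda$-integral at $\lambda\sim|\rho_\pm|^{-1}$ and integrate by parts there, and (for $d=2$) integrate by parts in the spatial variable with $L_{x_1},L_{x_1}^*$ to gain the missing factor of $\lambda^{-1}$ before closing with polar-coordinate estimates and the decay of $\varphi$, $\nabla\varphi$. The only differences are bookkeeping-level: the paper integrates by parts in both $x_1$ and $x_2$ and records the bound $\min\{|\rho_\pm|^{-1/2},|\rho_\pm|^{-2}\}$ instead of your $\langle\log|\rho_\pm|\rangle\langle\rho_\pm\rangle^{-2}$, but the final admissibility computation is the same.
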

		\begin{proof}
			We first consider  $d=1$. From \eqref{eq-high energy wave operator-2} and \eqref{eq2.1}, the kernel of $\mathcal{W}_-^h(H_\alpha,H_0)$ takes the form
			\begin{equation*}
				\begin{split}
					&\mathcal{W}_-^h(H_\alpha,H_0)(x,y)\\
					=&\frac{i}{4\pi }\int_0^\infty \lambda^{-1}G_+^{\alpha}(\lambda)\widetilde{\chi}(\lambda)\Big(\int_{\mathbb{R}}e^{i\lambda|x-x_1|}\varphi(x_1)\d x_1\cdot\int_{\mathbb{R}}\big(e^{i\lambda|x_2-y|}-e^{-i\lambda|x_2-y|}\big)\varphi(x_2)\d x_2\Big)\d\lambda.\\
				\end{split}
			\end{equation*}
			Now we analyze the oscillatory integral
			$$I(\rho_{+})=\int_0^\infty\lambda^{-1}G_+^{\alpha}(\lambda)\widetilde{\chi}(\lambda)e^{i\lambda\rho_+} \d\lambda,$$
			where we denote 
			$$\rho_{\pm}=|x-x_1|\pm |x_2-y|.$$

			Firstly, we consider the case when $\lambda\rho_{+}\le 1$, we divide $I(\rho_{+})$ into two terms as follows:
			\begin{equation*}
				I_1(\rho_{+})=\int_0^\infty \lambda^{-1}G_+^{\alpha}(\lambda)\widetilde{\chi}(\lambda)e^{i\lambda\rho_{+}}\eta\big(\lambda\rho_{+}\big)\d\lambda, 
			\end{equation*}
			and
			\begin{equation*}
				I_2(\rho_{+})=\int_0^\infty \lambda^{-1}G_+^{\alpha}(\lambda)\widetilde{\chi}(\lambda)e^{i\lambda\rho_{+}}\widetilde{\eta}\big(\lambda\rho_{+}\big)\d\lambda, 
			\end{equation*}
			where $\eta$ is defined in \eqref{eq-cut-off function eta}, and $\widetilde{\eta}=1-\eta.$
			
			For the first term $I_1(\rho_{+})$, using \eqref{lem:est for high G}, a direct computation shows that 
			$$\left| I_1(\rho_{+})\right| \lesssim_{\lambda_0} \int_{\lambda_0/2}^{\rho_{+}^{-1}}\lambda^{-\frac 12}\d\lambda\lesssim_{\lambda_0}(\rho_{+})^{-\frac 12}.  $$
			
			For the second term $I_2(\rho_{+})$, applying the integration by parts once, we have 
			$$\left| I_2(\rho_{+})\right| \lesssim_{\lambda_0}  \rho_{+}^{-1}\int_{\rho_{+}^{-1}}^{\infty}\lambda^{-\frac 32}\d\lambda\lesssim \rho_{+})^{-\frac 12}.$$
			Thus, when $\rho_{+}\le 1$, we can obtain that
			\[\left|I(\rho_{+})\right|\lesssim(\rho_{+})^{-\frac 12}.\]

			Now we study the case when $\lambda\rho_{+}> 1$, based on \eqref{lem:est for high G}, integrating by parts two times on $I(\rho_{+})$ yields 
			\begin{equation*}
				I(\rho_{+})= (\rho_{+}) ^{-2}\int_{\lambda_0/2}^\infty \lambda^{b-1}\d\lambda\lesssim_{\lambda_0} (\rho_{+}) ^{-2}.
			\end{equation*}
			Similar results hold for the integral
			$$\int_0^\infty \lambda^{-1} G_+^\alpha(\lambda) \widetilde{\chi}(\lambda) e^{i\lambda\rho_{-}} \, \d\lambda.$$
			Thus, we have 
			\begin{align*}
				\left|\mathcal{W}_-^h(H_\alpha,H_0)(x,y)\right|&\lesssim
				\int_{\R}\int_{\R} \Big( \min\left\{(|x-x_1|+|x_2-y|)^{-\frac 12},\, (|x-x_1|+|x_2-y|)^{-2}\right\} \\
				&+\min\left\{\big||x-x_1|-|x_2-y|\big|^{-\frac 12},\, \big||x-x_1|-|x_2-y|\big|^{-2}\right\}\Big)  |\varphi(x_1)||\varphi(x_2)|\d x_1 \d x_2.
			\end{align*}
			Note that 
			\begin{align*}
				&\sup_{x_1, x_2, y\in \R}\int_{\R}  \min\left\{\big||x-x_1|\pm |x_2-y|\big|^{-\frac 12},\, \big||x-x_1|\pm |x_2-y|\big|^{-2}\right\} \d x \\
				=&  \sup_{x_1, x_2, y\in \R}\int_{\R}  \min\left\{\big||x|\pm |x_2-y|\big|^{-\frac 12},\, \big||x|\pm |x_2-y|\big|^{-2}\right\} \d x\\
				\lesssim &  \sup_{x_1, x_2, y\in \R}\int_{\R}  \min\left\{|x|^{-\frac 12},\, |x|^{-2}\right\} \d x \\
				\lesssim &1.
			\end{align*}
			This implies that
			$$
			\int_{\R}|\mathcal{W}_-^h(H_\alpha,H_0)(x,y)|\d x\lesssim\int_{\R}\int_{\R}|\varphi(x_1)||\varphi(x_2)|\d x_1 \d x_2\lesssim 1. 
			$$
			Similarly, we also have
			$$
			\int_{\R}|\mathcal{W}_-^h(H_\alpha,H_0)(x,y)|\d y\lesssim\int_{\R}\int_{\R}|\varphi(x_1)||\varphi(x_2)|\d x_1 \d x_2\lesssim 1. 
			$$
			Thus the integral kernel  $\mathcal{W}_-^h(H_\alpha,H_0)(x,y)$ is admissible.

			Next, we prove the result  for $d=2$. By \eqref{eq-free kernel-3}, one has 
			\begin{equation}
				\begin{split}
					R_0^\pm(\lambda^2;x,y)= e^{\pm i\lambda|x-y|}w^{\pm}(\lambda|x-y|),
				\end{split}
			\end{equation}
			where  $w^{\pm}(z)$ satisfies
			\begin{align}\label{eq: est for omega}
				\Big|\frac{\d^k}{\d z^k}w^{\pm}(z)\Big|\lesssim_k |z|^{-\frac 12-k},\quad k\in\mathbb{N}_0.
			\end{align}
			Then we can express the kernel of $\mathcal{W}_-^h(H_\alpha,H_0)$ as 
			\begin{equation}
				\begin{split}
					\mathcal{W}_-^h(H_\alpha,H_0)(x,y)
					=&\frac{i}{\pi }\int_0^\infty\lambda G_+^{\alpha}(\lambda)\widetilde{\chi}(\lambda)\Big(\int_{\mathbb{R}}e^{i\lambda|x-x_1|}\varphi(x_1)w^+(\lambda|x-x_1|)\d x_1\\
					& \cdot\int_{\mathbb{R}}\big(e^{i\lambda|x_2-y|}w^{+}(\lambda|x_2-y|)-e^{-i\lambda|x_2-y|}w^{-}(\lambda|x_2-y|)\big)\varphi(x_2)\d x_2\Big)\d\lambda.\\
				\end{split}
			\end{equation}
			We remark that $G_+^{\alpha}(\lambda)$  satisfies the bounds  provided in Lemma  \ref{lem:est for high G} for $0\le k\le 3$.
			Adapting the method used for dimensions $d\ge 3$, we perform integration by parts once using the operators $L_{x_j}$ and $L_{x_j}^*$ ($j=1, 2$) as defined in \eqref{eq-differential operators} and \eqref{eq-differential operators-dual}.
			This  allows us to rewrite  $\mathcal{W}_-^h(H_\alpha,H_0)(x,y)$  in the form  
			\begin{align*}
				&\int_0^\infty\lambda^{-1} G_+^{\alpha}(\lambda)\widetilde{\chi}(\lambda)\Big(\int_{\mathbb{R}}e^{i\lambda|x-x_1|}\varphi(x_1)U^{+}(\lambda,x,x_1)\d x_1\\
				&\ \ \ \ \ \ \ \ \ \ \ \ \ \ \ \ \ \ \ \ \ \ \ \ \ \ \ \ \ \ \ \ \ \ \ \ \ \ \ \ \ \ \ \ \  \cdot\int_{\mathbb{R}}\big(e^{i\lambda|x_2-y|}U^{+}(\lambda,x_2,y)-e^{-i\lambda|x_2-y|}U^{-}(\lambda,x_2,y)\big)\d x_2\Big)\d\lambda.
			\end{align*}
			By \eqref{eq: est for omega} and the assumption on $\varphi$,  the functions $U^{+}(\lambda,x,x_1)$ and $U^{\pm}(\lambda,x_2,y)$ satisfy 
			\begin{equation}
				\left|\partial_{\lambda}^{k}U^{+}(\lambda,x,x_1)\right|\lesssim \lambda^{-\frac 12-k}|x-x_1|^{-\frac 12}\langle x_1\rangle^{-3},  
			\end{equation}
			and
			\begin{equation}
				\left|\partial_{\lambda}^{k}U^{\pm}(\lambda,x_2,y)\right|\lesssim \lambda^{-\frac 12-k}|x_2-y|^{-\frac 12}\langle x_2\rangle^{-3}.  
			\end{equation}
			Following the same approach as in the  $d=1$ case,  we derive bounds for the kernel $\mathcal{W}_-^h(H_\alpha,H_0)(x,y)$ by
			\begin{align*}
				\Big|\mathcal{W}_-^h&(H_\alpha,H_0)(x,y)\Big|\lesssim
				\int_{\R^2}\int_{\R^2} \Big( \min\{(|x-x_1|+|x_2-y|)^{-\frac 12},\, (|x-x_1|+|x_2-y|)^{-2}\} \\
				&+\min\{\big||x-x_1|-|x_2-y|\big|^{-\frac 12},\, \big||x-x_1|-|x_2-y|\big|^{-2}\}\Big) |x-x_1|^{-\frac 12}|x_2-y|^{-\frac 12}|\varphi(x_1)||\varphi(x_2)| \d x_1 \d x_2.
			\end{align*}
			Note that 
			\begin{align*}
				&\sup_{x_1\in \R^2}\int_{\R^2}  \min\left\{\big||x-x_1|\pm |x_2-y|\big|^{-\frac 12},\, \big||x-x_1|\pm |x_2-y|\big|^{-2}\right\} |x-x_1|^{-\frac 12} \d x \\ 
				&= 2\pi \sup_{x_1\in \R^2}\int_{0}^\infty  \min\left\{\big| \rho \pm |x_2-y|\big|^{-\frac 12},\, \big|\rho \pm |x_2-y|\big|^{-2}\right\}\,\rho ^{\frac 12}\d\rho \\
				&\lesssim \langle x_2-y\rangle^{\frac 12}. 
			\end{align*}
			Thus, 
			$$\sup_{y}\int_{\R^2} \Big|\mathcal{W}_-^h(H_\alpha,H_0)(x,y)\Big|\d x \lesssim \sup_{y}\int_{\R^2} \int_{\R^2}\int_{\R^2} |\varphi(x_1)||\varphi(x_2)|\langle x_2-y\rangle^{\frac 12} |x_2-y|^{-\frac 12}\d x_1\d x_2\lesssim 1. $$
			Similarly, 
			$$\sup_{x}\int_{\R^2} \Big|\mathcal{W}_-^h(H_\alpha,H_0)(x,y)\Big| \d y \lesssim 1. $$
			Therefore $\mathcal{W}_-^h(H_\alpha,H_0)(x,y)$ is admissible, and the proof is complete.
		\end{proof}
		
		\subsection{The unboundedness at $p=1$}
		In this subsection, we prove that the wave operator $\mathcal{W}_-(H_\alpha,H_0)$ fails to be bounded at the endpoint case $p=1$ for  dimensions  $d=1, 2$, provided that $\int_{\R^d} \varphi(x)\d x\neq 0$.  
		\begin{proposition}\label{theorem-low energy-d=1,2 unbounded}
			Let $d=1$ or $2$ and assume that $\int_{\mathbb{R}^d}\varphi(x)\d x\neq0$. Then: 
			
			(i)  The operator $\mathcal{W}_-(H_\alpha,H_0)$ is unbounded on $L^1(\mathbb{R})$ and $L^\infty(\R)$ for $d=1$;  
			
			(ii)  $\mathcal{W}_-(H_\alpha,H_0)$ is unbounded on $L^1(\mathbb{R}^2)$ for $d=2$.
		\end{proposition}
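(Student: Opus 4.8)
The plan is to remove from $\mathcal{W}_-(H_\alpha,H_0)$ every piece that is visibly bounded on $L^1$ (and, for $d=1$, on $L^\infty$), leaving a single explicit operator, and then to show that this operator fails the relevant Schur estimate. Write $W_-(H_\alpha,H_0)=I-\mathcal{W}_-^l(H_\alpha,H_0)-\mathcal{W}_-^h(H_\alpha,H_0)$; the identity is bounded on every $L^p$, and by Proposition~\ref{theorem-high energy-d=1,2} the kernel of $\mathcal{W}_-^h(H_\alpha,H_0)$ is admissible, so $\mathcal{W}_-^h(H_\alpha,H_0)$ is bounded on $L^1$ and on $L^\infty$. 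Hence it suffices to prove that $\mathcal{W}_-^l(H_\alpha,H_0)$ is unbounded on $L^1(\R^d)$ for $d=1,2$, and on $L^\infty(\R)$ for $d=1$.

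For $d=1$ I would use the decomposition obtained in the proof of Proposition~\ref{theorem-low energy-d=1,2}(ii): since $\int_{\R}\varphi\neq0$,
\[
\mathcal{W}_-^l(H_\alpha,H_0)(x,y)=c_0\Big(\frac1{|x|+|y|}-\frac1{|x|-|y|}\Big)\mathbf{1}_{\{\,||x|-|y||>1\,\}}+K_{\mathrm{adm}}(x,y),
\]
with $c_0\neq0$ and $K_{\mathrm{adm}}$ admissible (the terms $\mathrm{II},\mathrm{III}$ of \eqref{eq-Wl1-0504-1}, the summand $\mathcal{W}_-^{l,2}$, and the contribution of $\{||x|-|y||\le1\}$ all being admissible). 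Two elementary one–dimensional computations then finish the argument. First, for fixed $x>0$ one has $\big|\tfrac1{|x|+|y|}-\tfrac1{|x|-|y|}\big|=\tfrac{2|y|}{|\,|x|^2-|y|^2\,|}\sim\tfrac2{|y|}$ as $|y|\to\infty$, so $\int_{\R}\big|\tfrac1{|x|+|y|}-\tfrac1{|x|-|y|}\big|\mathbf{1}_{\{||x|-|y||>1\}}\,\d y=\infty$ for every $x$; since $K_{\mathrm{adm}}$ is admissible this gives $\sup_x\int_{\R}|\mathcal{W}_-^l(H_\alpha,H_0)(x,y)|\,\d y=\infty$, hence unboundedness on $L^\infty(\R)$. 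Second, for fixed $y\neq0$,
\[
\int_{\R}\Big|\frac1{|x|+|y|}-\frac1{|x|-|y|}\Big|\mathbf{1}_{\{||x|-|y||>1\}}\,\d x=2\big(\log(2|y|-1)+\log(2|y|+1)\big)\longrightarrow\infty\quad(|y|\to\infty),
\]
so $\sup_y\int_{\R}|\mathcal{W}_-^l(H_\alpha,H_0)(x,y)|\,\d x=\infty$; testing on $L^1$–normalized approximate identities centered at points $y_n$ with $|y_n|\to\infty$ yields unboundedness on $L^1(\R)$. Both divergences are simply the classical statement that a truncated Hilbert transform is of weak type $(1,1)$ but fails to be bounded on $L^1$ or on $L^\infty$, so this part of the proof is routine.

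For $d=2$ I would use the factorization $\mathcal{W}_-^l(H_\alpha,H_0)=T_\varphi\circ M$ from the proof of Proposition~\ref{theorem-low energy-d=1,2}(ii), where $T_\varphi$ has the explicit kernel of Lemma~\ref{lem-T-phi-0504} and $M=G_+^{\alpha}(\sqrt{-\Delta})\chi(\sqrt{-\Delta})$ is bounded on every $L^p(\R^2)$ by Lemma~\ref{lem-Fou-multiplier} (since $G_+^{\alpha}$ satisfies \eqref{eq-psi-0504} with $a=1>\tfrac12$). Three facts drive the argument: $(\mathrm{i})$ because $\int_{\R^2}\varphi\neq0$, the kernel of $T_\varphi$ satisfies $T_\varphi(x,z)=\tfrac{(\int_{\R^2}\varphi)^2}{16\pi^2}\,|x|^{-2}(1+o(1))$ as $|x|\to\infty$ with $|z|$ bounded, the $\delta$–part in Lemma~\ref{lem-T-phi-0504} being supported away from such pairs; $(\mathrm{ii})$ every $Mf$ has vanishing mean, since $\widehat{Mf}(0)=G_+^{\alpha}(0)\chi(0)\widehat f(0)=0$ because $G_+^{\alpha}(\lambda)\to0$ as $\lambda\to0$ (see \eqref{eq: expansion for G when d=2}); and $(\mathrm{iii})$ because $G_+^{\alpha}(|\xi|)\sim(a_2\log|\xi|)^{-1}$ is continuous and vanishes \emph{monotonically} at $\xi=0$ rather than being $C^1$ there, the function $k_0:=\mathcal F^{-1}(G_+^{\alpha}\chi)\in L^1(\R^2)$ has a \emph{non‑oscillatory} tail of size $\sim c_1|x|^{-2}|\log|x||^{-\beta}$ for some $\beta\in(1,2]$ and $c_1\neq0$ (the edge $|\xi|=\lambda_0$ of $\chi$ contributing only a rapidly decaying term), whence $\int_{|z|>R}k_0(z)\,\d z\sim c_2|\log R|^{1-\beta}$ with $c_2\neq0$. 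Fixing $f\in C_c^\infty(\R^2)$ with $\int f=1$ and writing $\mathcal{W}_-^l(H_\alpha,H_0)f(x)=\int_{\R^2}T_\varphi(x,z)(Mf)(z)\,\d z$, one splits the $z$–integral at $|z|\sim|x|$ and combines $(\mathrm{i})$–$(\mathrm{iii})$ with $\int_{|z|<|x|/2}(Mf)=-\int_{|z|>|x|/2}(Mf)\sim-c_2|\log|x||^{1-\beta}$ to obtain
\[
\mathcal{W}_-^l(H_\alpha,H_0)f(x)=\frac{-c_3}{|x|^2|\log|x||^{\beta-1}}\,(1+o(1)),\qquad|x|\to\infty,\quad c_3\neq0,
\]
the remaining contributions (the region $|z|\gtrsim|x|$ and the lower–order terms in the kernel of $T_\varphi$) being of smaller order. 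Since $\beta-1\le1$, $\int_{\R^2}|x|^{-2}|\log|x||^{-(\beta-1)}\,\d x=\infty$ while $\|f\|_{L^1}<\infty$, so $\mathcal{W}_-^l(H_\alpha,H_0)$, and hence $W_-(H_\alpha,H_0)$, is unbounded on $L^1(\R^2)$. The main obstacle is item $(\mathrm{iii})$ and its interplay with $(\mathrm{ii})$: one must show that the slowly–decaying part of $k_0$ — which comes entirely from the mild singularity of $G_+^{\alpha}(|\xi|)\sim(a_2\log|\xi|)^{-1}$ at $\xi=0$ — is non‑oscillatory, carries a non‑vanishing leading coefficient and a logarithmic power $\beta\le2$, and that this behaviour survives both the mean–zero cancellation and the lower–order terms of the kernel of $T_\varphi$; concretely, this reduces to a careful stationary–phase analysis of $\mathcal F^{-1}\big((\log|\xi|)^{-1}\chi(|\xi|)\big)$ together with a full bookkeeping of the error terms in the expansion of $\int T_\varphi(x,z)(Mf)(z)\,\d z$.
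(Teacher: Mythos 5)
The $d=1$ half of your argument is correct and is in substance the paper's own: after discarding the high-energy part (Proposition \ref{theorem-high energy-d=1,2}) and the admissible pieces $\uppercase\expandafter{\romannumeral2}$, $\uppercase\expandafter{\romannumeral3}$, $\mathcal{W}_-^{l,2}$ from \eqref{eq-Wl1-0504-1}, only the truncated kernel $\frac{1}{2\pi i}\big(\frac1{|x|+|y|}-\frac1{|x|-|y|}\big)\mathbf{1}_{\{||x|-|y||>1\}}$ remains, and since this kernel equals $\frac{2|y|}{|y|^2-|x|^2}$ up to a constant and has a single sign for $|y|>|x|$, your Schur-type computations (divergent row integrals for $L^\infty$, divergent column integrals of size $\sim 4\log|y|$ for $L^1$, tested with approximate identities) are legitimate and give exactly what the paper obtains by testing with $\mathbf{1}_{\{|y|>2\}}$ and duality.

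The $d=2$ half, however, has a genuine gap. Your route rests on precise two-sided asymptotics for $k_0=\mathcal{F}^{-1}\big(G_+^{\alpha}(|\cdot|)\chi(|\cdot|)\big)$: a non-oscillatory tail $\sim c_1|x|^{-2}|\log|x||^{-\beta}$ with $c_1\neq0$ and $\beta\in(1,2]$, the consequent $\int_{|z|>R}k_0(z)\,\d z\sim c_2(\log R)^{1-\beta}$ with $c_2\neq0$, and the survival of the resulting leading term under the mean-zero cancellation of $Mf$ and the lower-order terms of the kernel of $T_\varphi$. None of these facts is proved; you yourself defer them as ``the main obstacle.'' Lemma \ref{lem-Fou-multiplier} supplies only the one-sided bound $|k_0(x)|\lesssim|x|^{-d}|\log|x||^{-a-1+\frac1d}$ and says nothing about sign, oscillation, or a matching lower bound, and if the tail coefficient vanished or the tail oscillated your non-integrability conclusion would collapse; so as written the $d=2$ argument is a program, not a proof. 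The paper avoids this entirely by an integrated argument: assuming $L^1$-boundedness and taking $f=(\chi(2|\cdot|))^{\vee}$, it evaluates $\int_{\R^2}\mathcal{W}_-^l f(x)\,\d x$ (the Fourier transform at $\xi=0$) via \eqref{eq-T-phi-5-5}, obtaining $|\hat\varphi(0)|^2\lim_{\varepsilon\downarrow0}\int_{\R^2}G_+^{\alpha}(|\eta|)\chi(|\eta|)(|\eta|^2-i\varepsilon)^{-1}\d\eta$, which diverges because $G_+^{\alpha}(\lambda)\sim(a_2\log\lambda)^{-1}$ by \eqref{eq: expansion for G when d=2}; this single divergence yields the contradiction without any pointwise tail analysis. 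To complete your proposal you would either have to establish the asymptotics of $k_0$ rigorously or switch to this Fourier-at-zero argument.
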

		\begin{proof}
			To prove the unboundedness of the wave operator $W_-(H_\alpha,H_0)$ in dimensions $d=1$ and $d=2$, we apply Proposition \ref{theorem-high energy-d=1,2}, which  reduces the problem to analyzing its low energy component $\mathcal{W}_{-}^l(H_\alpha,H_0)$.
			
			For the case $d=1$, we observe that from expressions \eqref{W-pm-05-3}, \eqref{eq-Wl1-0504-1}, and estimates \eqref{eq-Wl1-0504-2},  \eqref{eq-I-123-0504},  \eqref{eq-W-l2-0504}, the operators associated with kernels $\uppercase\expandafter{\romannumeral2}(x,y)$, $\uppercase\expandafter{\romannumeral3}(x,y)$ and $\mathcal{W}_-^{l,2}(x,y)$ are bounded on $L^p$ for all $1\le p\le \infty$.
			Thus, we need only examine the  unboundedness of the operator associated with the kernel $\uppercase\expandafter{\romannumeral1}(x,y)$ in \eqref{eq-Wl1-0504-1}.
			Let $\mathbb{H}^{*}_{1}$ be the operator associated with the kernel 
			$$\frac{1}{2\pi i}\Big(\frac{1}{|x|+|y|}-\frac{1}{|x|-|y|}\Big)
			\mathbf{1}_{\{||x|-|y||>1\}},
			$$ 
			where $\mathbf{1}_{\{||x|-|y||>1\}}$ denotes the characteristic function of $\{(x,y)\in\R^2: ||x|-|y||>1\}$. Consider the test function $f(y)=\mathbf{1}_{\{|y|>2\}}$, then for each fixed $x$ with $|x|<1$, we have  
			$$
			(\mathbb{H}^{*}_{1}f)(x)=\frac{1}{2\pi i}\int_{\R}\Big(\frac{1}{|x|+|y|}-\frac{1}{|x|-|y|}\Big)
			\mathbf{1}_{\{||x|-|y||\}>1}f(y) \d y=\frac{1}{\pi i}\int_{|y|>2}\frac{|y|}{|y|^2-|x|^2}\d y,
			$$
			which diverges to $\infty$. This shows that $\mathbb{H}^{*}_{1}$ is  unbounded on $L^\infty(\mathbb{R})$.  By duality, it is  unbounded on $L^1(\mathbb{R})$ as well. 
			
			When $d=2$, we proceed by contradiction. Assume that the wave operator $\mathcal{W}_{-}^l$ is bounded on $L^1(\mathbb{R}^2)$. Then $\mathcal{W}_{-}^lf(x)\in L^1(\mathbb{R}^2)$ for all $f\in L^1$. 
			Consider the  function
			$f = \left( \chi \left( 2|\cdot| \right) \right)^{\vee} \in L^1(\mathbb{R}^2)$, where $\chi$ is as defined in \eqref{eq-cut-off function}. From   \eqref{eq-T-phi-5-5}, we obtain that
			\begin{equation}\label{eq-rep-pair}
				\begin{aligned}
					\int_{\mathbb{R}^2} \mathcal{W}_{-}^lf(x) \d x= |\hat{\varphi}(0)|^2\lim_{\varepsilon\downarrow0}\int_{\mathbb{R}^2}\frac{G_+^{\alpha}(|\eta|)\chi(|\eta|) }{|\eta|^2-i\varepsilon}
					\,\d\eta, 
				\end{aligned}
			\end{equation}
			where $\hat{\varphi}(0)=\int{\varphi(x)\,\d x} \neq 0$.  However, by \eqref{eq: expansion for G when d=2}, the integral on the RHS of  \eqref{eq-rep-pair} diverges. This contradicts our assumption that $\mathcal{W}_{-}^l$ is $L^1$ bounded, thereby completing the proof by contradiction.
		\end{proof}

		\section{Proof of Theorem \ref{thm-main result-finite rank}}\label{sec5}
		In this section, we aim to prove Theorem \ref{thm-main result-finite rank}. 
		Recall that the stationary representation of wave operators for $W_-(H,H_0)$:
		\begin{equation}\label{eq-W1W2}
			\begin{split}
				W_-(H,H_0) f
				&=f-\sum_{i,j=1}^N\frac{1}{\pi i}\int_0^\infty \lambda g^+_{i,j}(\lambda) R_0^+(\lambda^2)\varphi_{j}\langle(R_0^+(\lambda^2)-R_0^-(\lambda^2))f,\varphi_i\rangle \d\lambda\\
				&:= f- \sum_{i,j=1}^N\mathcal{W}_{-,i,j}(H,H_0)f.
			\end{split}
		\end{equation}
		Note that $\mathcal{W}_{-,i,j}(H,H_0)f$ admits the same integral representation as the rank-one wave operator ${W}_{-}$, but with $G_+^\alpha(\lambda)$ replaced by $g_{i,j}^+(\lambda)$.
		Combining the  identity \eqref{eq-W1W2} with the proofs for rank-one perturbations in  Section \ref{sec3} and Section \ref{sec4}, the key remaining task is to verify that $g^+_{i,j}(\lambda)$ satisfies the same properties established for  $G_+^{\alpha}(\lambda)$  in Lemmas \ref{lm3.2}, \ref{lem:est for high G} and \ref{lem: for int donse not vanish}.

		We begin by considering the high-energy component.
		
		\begin{lemma}\label{lemma-g_ij-high energy}
			Let $d\ge 1$. Assume that each $\varphi_i\ (i=1,\cdots,N)$ satisfies (i) of \textbf{Condition $H_\alpha$} and that the spectral assumption \eqref{eq-spectral condition-finite rank} holds. Then for  $\lambda>\lambda_0$ with any fixed $\lambda_0,$ we have 
			\begin{equation}\label{eq-g_ij-high energy}
				\Big|\frac{\d^k}{\d\lambda^k}g_{i,j}^\pm(\lambda)\Big|\lesssim_{\lambda_0} \begin{cases}
					1,& \quad k=0,\\
					\lambda^{-1}, &\quad k=1,\cdots,\big[\frac{d+3}{2}\big]. 
				\end{cases}                                      
			\end{equation}
		\end{lemma}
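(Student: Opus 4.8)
The plan is to use Cramer's rule to write each entry $g_{i,j}^\pm(\lambda)$ of the inverse matrix $G^\pm(\lambda^2)=A^\pm(\lambda)^{-1}$ as a quotient of a cofactor of $A^\pm(\lambda)$ by $\det A^\pm(\lambda)$, to reduce all estimates to bounds on the individual matrix entries $a_{kl}^\pm(\lambda)$, and then to carry out the product- and chain-rule bookkeeping, crucially using the uniform lower bound \eqref{eq-spectral condition-finite rank} on $|\det A^\pm(\lambda)|$. This is the exact finite-rank analogue of Lemma \ref{lem:est for high G}, and once it is in hand the proof of Theorem \ref{thm-main result-finite rank} for the high-energy part follows verbatim from the rank-one argument of Section \ref{sec3} and Section \ref{sec4}.

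First I would record the entry estimates. For $\lambda>\lambda_0$, the limiting absorption principle \eqref{eq: lim absor prin} together with the decay hypothesis \eqref{eq-decay condition} on each $\varphi_i$ (which places $\varphi_i$ in the weighted spaces $L^2_\sigma$ needed to apply \eqref{eq: lim absor prin} for $\sigma$ up to $\big[\frac{d+3}{2}\big]+\frac12$) gives, for all $1\le i,j\le N$ and $0\le k\le\big[\frac{d+3}{2}\big]$,
\[
\Big|\frac{\d^k}{\d\lambda^k}a_{ij}^\pm(\lambda)\Big|=\Big|\frac{\d^k}{\d\lambda^k}\big(\delta_{ij}+\langle R_0^\pm(\lambda^2)\varphi_j,\varphi_i\rangle\big)\Big|\lesssim_{\lambda_0}
\begin{cases}1,&k=0,\\[2pt]\lambda^{-1},&1\le k\le\big[\tfrac{d+3}{2}\big],\end{cases}
\]
the $k=0$ case using $\lambda^{-1}\lesssim\lambda_0^{-1}$ on $\{\lambda>\lambda_0\}$. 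By Cramer's rule one has $g_{i,j}^\pm(\lambda)=(-1)^{i+j}M_{ji}^\pm(\lambda)/\det A^\pm(\lambda)$, where $M_{ji}^\pm(\lambda)$ is the $(j,i)$-minor; both $M_{ji}^\pm$ and $\det A^\pm$ are fixed polynomials (with integer coefficients) in the $N^2$ entries $a_{kl}^\pm$, of degrees $N-1$ and $N$ respectively.

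Next I would note that for any such polynomial $P$, the multivariate Leibniz rule expresses $\frac{\d^k}{\d\lambda^k}P\big(A^\pm(\lambda)\big)$ as a finite sum of products of entries $a_{kl}^\pm$ and their $\lambda$-derivatives, with the orders of those derivatives summing to $k$. When $k\ge1$ every summand differentiates at least one entry, contributing a factor $\lesssim\lambda^{-1}$, while every other factor is $\lesssim1$; hence $\big|\frac{\d^k}{\d\lambda^k}\det A^\pm(\lambda)\big|+\big|\frac{\d^k}{\d\lambda^k}M_{ji}^\pm(\lambda)\big|\lesssim_{\lambda_0}\lambda^{-1}$ for $1\le k\le\big[\frac{d+3}{2}\big]$, both being $\lesssim1$ when $k=0$. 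For the reciprocal I would apply the Fa\`a di Bruno formula to $\lambda\mapsto1/\det A^\pm(\lambda)$: by \eqref{eq-spectral condition-finite rank} the denominators stay $\ge c_0$, so for $k\ge1$ each term of $\big(1/\det A^\pm\big)^{(k)}$ has the form $c\,(\det A^\pm)^{-(m+1)}\prod_{\ell=1}^m(\det A^\pm)^{(j_\ell)}$ with $m\ge1$, $j_\ell\ge1$, $\sum_\ell j_\ell=k$; the product of the $m\ge1$ factors $(\det A^\pm)^{(j_\ell)}$ is $\lesssim\lambda^{-m}\lesssim\lambda_0^{-(m-1)}\lambda^{-1}$ and the denominator is $\gtrsim c_0^{m+1}$, so $\big|\big(1/\det A^\pm\big)^{(k)}\big|\lesssim\lambda^{-1}$ for $1\le k\le\big[\frac{d+3}{2}\big]$, while $\big|1/\det A^\pm\big|\le c_0^{-1}$.

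Finally, writing $g_{i,j}^\pm=(-1)^{i+j}M_{ji}^\pm\cdot\big(1/\det A^\pm\big)$ and applying the Leibniz rule once more, $\frac{\d^k}{\d\lambda^k}g_{i,j}^\pm=\sum_{\ell=0}^k\binom{k}{\ell}(M_{ji}^\pm)^{(\ell)}\big(1/\det A^\pm\big)^{(k-\ell)}$; for $k\ge1$ each summand has $\ell\ge1$ or $k-\ell\ge1$, so one factor is $\lesssim\lambda^{-1}$ and the other $\lesssim1$, whence $\big|\frac{\d^k}{\d\lambda^k}g_{i,j}^\pm\big|\lesssim\lambda^{-1}$, while $|g_{i,j}^\pm|\le c_0^{-1}|M_{ji}^\pm|\lesssim1$; this is exactly \eqref{eq-g_ij-high energy}. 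I expect the only delicate point — more bookkeeping than substance — to be tracking powers of $\lambda$ through the chain rule for $1/\det A^\pm$: naively $\big(1/\det A^\pm\big)^{(k)}$ might seem to carry $\lambda^{-k}$, but on $\{\lambda>\lambda_0\}$ the surplus negative powers are absorbed into the implicit constant, so that a single factor $\lambda^{-1}$ per differentiation survives, matching Lemma \ref{lem:est for high G} verbatim.
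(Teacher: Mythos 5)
Your proposal is correct and follows essentially the same route as the paper: entry-wise derivative bounds for $a_{ij}^\pm$ from the limiting absorption principle \eqref{eq: lim absor prin} and the decay of $\varphi_i$, the lower bound $|\det A^\pm(\lambda)|\ge c_0$ from \eqref{eq-spectral condition-finite rank}, and the adjugate/Cramer representation \eqref{eq4.14.2} with Leibniz-type bookkeeping (the paper packages the $k\ge1$ bookkeeping via the matrix identity \eqref{eq4.14.3} for derivatives of $A^\pm(\lambda)^{-1}$ rather than Fa\`a di Bruno, which is only a cosmetic difference).
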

		\begin{proof}
			It follows from the decay assumptions on $\varphi_j$ and the limiting absorption principle in \eqref{eq: lim absor prin} that for all $\lambda>\lambda_0$,
			\begin{equation}\label{eq4.14.1}
				\Big|\frac{\d^k}{\d\lambda^k}f^{\pm}_{i,j}(\lambda)\Big|\lesssim_{\lambda_0}\lambda^{-1}, \,\,\,\quad \,\,\,~k=0, 1,\ldots,[\frac{d+3}{2}].
			\end{equation}
			Recall that 
			\begin{equation}\label{eq4.14.2}
				\left(g_{i,j}^{\pm}(\lambda)\right)_{N\times N}=A^{\pm}(\lambda)^{-1}=\frac{1}{\det{(A^{\pm}(\lambda))}}\text {adj}(A^{\pm}(\lambda^2)),
			\end{equation}
			where $\text{adj}(A^{\pm}(\lambda))$ is the adjugate matrix of $A^{\pm}(\lambda)$ defined in \eqref{eq: def of matrix A}.
			By assumption \eqref{eq-spectral condition-finite rank}, one has $|\det{A^{\pm}(\lambda)}|\ge c_0>0$. Combining this with \eqref{eq4.14.1} and \eqref{eq4.14.2},  there exists some positive constant $C=C(N, \lambda_0, \varphi_1,\ldots, \varphi_N)$ such that
			\begin{equation}\label{eq4.14.2.0}
				|g_{i,j}^{\pm}(\lambda)|\leq C,
			\end{equation}
			which proves \eqref{eq-g_ij-high energy} for $k=0$. Further, we notice that
			\begin{equation}\label{eq4.14.3}
				\frac{\d^k}{\d\lambda^k}A^{\pm}(\lambda)^{-1}=A^{\pm}(\lambda)^{-1}\sum_{s=1}^{k}\sum_{l_{m,1}+\ldots+l_{m,s}=k}
				C(l_{m,1},\cdots,l_{m,s})\prod_{m=1}^s\left[\frac{\d^{l_{m,s}}}{\d\lambda^{l_{m,s}}}(A^{\pm}(\lambda)) \cdot A^{\pm}(\lambda)^{-1}\right].
			\end{equation}
			This, together with \eqref{eq4.14.1}, proves \eqref{eq-g_ij-high energy} for $k=1,\ldots,[\frac{d+3}{2}]$.
		\end{proof}
		
		Now we shift our focus to the  low-energy component. For $d\ge 3$, the results are straightforward to present. 
		However, the cases of $d = 1$ and $d = 2$ require more careful analysis due to the singularities of $R_0^\pm(\lambda^2)$ at $\lambda=0$.
		These lower-dimensional cases will be addressed separately in the following discussion.

		\begin{lemma}\label{lemma:g_ij-low energy-1}
			Let $d\ge 3$. Assume  each $\varphi_i\ (i=1,\cdots,N)$ satisfies (i) of \textbf{Condition $H_\alpha$} and that the spectral assumption \eqref{eq-spectral condition-finite rank} holds. Then there exists a small $\lambda_0>0$ such that for any $0<\lambda<\lambda_0$,  
			$$  g_{i,j}^{\pm}(\lambda)=g_{i,j}^{\pm,0}+ g_{i,j}^{\pm,1}(\lambda),$$
			where $g_{i,j}^{\pm,0}$ is a constant and $g_{i,j}^{\pm,1}(\lambda)$ satisfies
			\begin{equation}\label{eq:g_ij-low energy-1}
				\Big|\frac{\d^k}{\d\lambda^k}g_{i,j}^{\pm,1}(\lambda)\Big|\lesssim \lambda^{1-k},\ \ \    k=0,1,\cdots,\left[\frac{d+3}{2}\right].  
			\end{equation}
		\end{lemma}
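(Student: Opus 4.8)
The plan is to transfer the scalar low-energy expansion of Lemma \ref{lemma-F low energy-d>3} to the $N\times N$ matrix $A^\pm(\lambda)=\mathbb{I}_N+F^\pm_{N\times N}(\lambda^2)$ and then invert by a Neumann series. Applying the decomposition \eqref{eq：exp for  reso-0-1}, the bound \eqref{eq: est r_d>3}, and the decay hypothesis \eqref{eq-decay condition} to each entry $\langle R_0^\pm(\lambda^2)\varphi_i,\varphi_j\rangle$ of $F^\pm_{N\times N}(\lambda^2)$ exactly as in the proof of Lemma \ref{lemma-F low energy-d>3}, I would first write, for $0<\lambda<\tfrac12$,
$$A^\pm(\lambda)=A_0+R^\pm(\lambda),$$
where $A_0=\mathbb{I}_N+\big(c_d\langle(-\Delta)^{-1}\varphi_i,\varphi_j\rangle\big)_{1\le i,j\le N}$ is a constant real matrix (in fact independent of the sign $\pm$), and the matrix-valued remainder $R^\pm(\lambda)$ satisfies $\big\|\partial_\lambda^k R^\pm(\lambda)\big\|\lesssim\lambda^{1-k}$ for $k=0,1,\dots,[\frac{d+3}{2}]$, where $\|\cdot\|$ denotes any fixed matrix norm.

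Next I would use the spectral hypothesis to invert $A^\pm(\lambda)$ near $\lambda=0$. Since $A^\pm(\lambda)\to A_0$ as $\lambda\downarrow0$ by the expansion above, continuity of the determinant together with \eqref{eq-spectral condition-finite rank} gives $|\det A_0|\ge c_0>0$, so $A_0$ is invertible. Choosing $\lambda_0>0$ small enough that $\|A_0^{-1}R^\pm(\lambda)\|\le\tfrac12$ on $(0,\lambda_0)$ (possible because $\|R^\pm(\lambda)\|\lesssim\lambda$), a Neumann series shows $\mathbb{I}_N+A_0^{-1}R^\pm(\lambda)$, hence $A^\pm(\lambda)$, is invertible there, and
$$A^\pm(\lambda)^{-1}-A_0^{-1}=-A^\pm(\lambda)^{-1}R^\pm(\lambda)A_0^{-1}.$$
Then I would set $g_{i,j}^{\pm,0}:=(A_0^{-1})_{ij}$, a constant, and $g_{i,j}^{\pm,1}(\lambda):=\big(A^\pm(\lambda)^{-1}-A_0^{-1}\big)_{ij}$, giving the decomposition $g_{i,j}^\pm(\lambda)=g_{i,j}^{\pm,0}+g_{i,j}^{\pm,1}(\lambda)$.

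Finally, the derivative estimate \eqref{eq:g_ij-low energy-1} is a Leibniz bookkeeping exercise built on two observations. From \eqref{eq4.14.3}, $\partial_\lambda^k\big(A^\pm(\lambda)^{-1}\big)$ is a finite sum of terms $A^\pm(\lambda)^{-1}\prod_{m=1}^s\big[\partial_\lambda^{l_m}A^\pm(\lambda)\cdot A^\pm(\lambda)^{-1}\big]$ with $l_1+\dots+l_s=k$, $l_m\ge1$; since $\|A^\pm(\lambda)^{-1}\|\lesssim1$ and $\|\partial_\lambda^{l_m}A^\pm(\lambda)\|=\|\partial_\lambda^{l_m}R^\pm(\lambda)\|\lesssim\lambda^{1-l_m}$, each such term is $O(\lambda^{s-k})=O(\lambda^{1-k})$ on $(0,\lambda_0)$, so $\|\partial_\lambda^k A^\pm(\lambda)^{-1}\|\lesssim\lambda^{1-k}$ for $1\le k\le[\frac{d+3}{2}]$ and $\lesssim1$ for $k=0$. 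Differentiating $g_{i,j}^{\pm,1}(\lambda)=-\big(A^\pm(\lambda)^{-1}R^\pm(\lambda)A_0^{-1}\big)_{ij}$ by the Leibniz rule, the term in which all $k$ derivatives land on $R^\pm$ is $O(\lambda^{1-k})$, while every other term carries at least one derivative on $A^\pm(\lambda)^{-1}$ and is therefore $O(\lambda^{2-k})$, which is negligible compared to $\lambda^{1-k}$ near zero; this yields $|\partial_\lambda^k g_{i,j}^{\pm,1}(\lambda)|\lesssim\lambda^{1-k}$ for $k=0,1,\dots,[\frac{d+3}{2}]$. The only genuinely delicate point is the invertibility of the limiting matrix $A_0$, which must be extracted from the determinant assumption \eqref{eq-spectral condition-finite rank}; once that is in place the rest reduces to the scalar estimates of Lemma \ref{lemma-F low energy-d>3} and elementary linear algebra.
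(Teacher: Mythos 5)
Your proposal is correct, and its skeleton (low-energy expansion of the resolvent entries into a constant leading term plus a remainder with $\partial_\lambda^k$-bounds $\lambda^{1-k}$, Neumann-series inversion near $\lambda=0$, then Leibniz bookkeeping via the identity \eqref{eq4.14.3}) is the same as the paper's. The one genuine difference is where the inversion happens and how invertibility of the limiting object is justified. You work directly with the $N\times N$ matrix $A^\pm(\lambda)$ and obtain invertibility of the limit $A_0$ from the spectral assumption \eqref{eq-spectral condition-finite rank} by continuity of the determinant as $\lambda\downarrow 0$; this is a valid and elementary argument, but it makes the zero-energy behavior depend on that hypothesis. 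The paper instead works with the operator $I+PR_0^\pm(\lambda^2)P$ on $PL^2$ and notes that the limiting operator $I+PG_0P$, with $G_0=(-\Delta)^{-1}$, is strictly positive (indeed $\ge I$) and hence automatically invertible, so for $d\ge 3$ no threshold condition is needed at all; the determinant assumption is only used away from $\lambda=0$. Equivalently, in your matrix language, the Gram-type matrix $\big(\langle G_0\varphi_j,\varphi_i\rangle\big)$ is positive semidefinite, so $A_0\ge \mathbb{I}_N$ without invoking \eqref{eq-spectral condition-finite rank}; this slightly sharper observation is the only content your route misses. Your entrywise transfer of Lemma \ref{lemma-F low energy-d>3} via \eqref{eq: est r_d>3} and the decay hypothesis, the resolvent identity $A^\pm(\lambda)^{-1}-A_0^{-1}=-A^\pm(\lambda)^{-1}R^\pm(\lambda)A_0^{-1}$, and the resulting estimates $|\partial_\lambda^k g_{i,j}^{\pm,1}(\lambda)|\lesssim\lambda^{1-k}$ for $0\le k\le\left[\frac{d+3}{2}\right]$ are all sound.
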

		\begin{proof}
			We first establish the asymptotic expansion of $(I+PR_0^\pm(\lambda^2)P)^{-1}$ around zero. Similar to Lemma \ref{lemma-F low energy-d>3},  by the expansions in \eqref{eq：exp for  reso-0-1}, for $d\ge 3$, the operator $I+PR_0^\pm(\lambda^2)P$ admits the following  expansion
			$$I+PR_0^\pm(\lambda^2)P=I+PG_0P+\mathcal{R}_d^{\pm}(\lambda)\quad \mbox{in}\quad L^2,$$
			where $G_0=(-\Delta)^{-1}$ is the operator with the integral kernel as $c_d|x-y|^{d-2}$.
			Here, using our decay assumption on $\varphi{_j}$ and \eqref{eq: est r_d>3},  $\mathcal{R}_d^{\pm}(\lambda)$ satisfies 
			\begin{equation}\label{eq：est for tilde r_d}
				\left\|\frac{\d^{k}\mathcal{R}_d^{\pm}(\lambda)}{\d\lambda^k} \right\|_{L^2-L^2}\lesssim \lambda^{1-k},\quad k=0,1,\ldots,\left[\frac{d+3}{2}\right].  	
			\end{equation}
			Since $I + P G_0 P$ is strictly positive on $L^2$, it follows that $I + P G_0 P$ is invertible on $L^2$. By the Neumann series expansion, there exists $\lambda_0 > 0$ such that for $0 < \lambda < \lambda_0$, the operator $I + P R_0^\pm(\lambda^2) P$ is invertible on $L^2$ with
			\begin{align*}
				\left(I + P R_0^\pm(\lambda^2) P\right)^{-1} &= \sum_{k=0}^\infty \left(I + P G_0 P\right)^{-1} \left[-\mathcal{R}_d^{\pm}(\lambda) \left(I + P G_0 P\right)^{-1}\right]^k \\
				&= \left(I + P G_0 P\right)^{-1} + L^\pm(\lambda),
			\end{align*}
			where $L^\pm(\lambda)$ satisfies the same estimate as in \eqref{eq：est for tilde r_d}. Thus, we obtain the decomposition
			$$
			\left(g_{i,j}^{\pm}(\lambda)\right)_{N \times N} = \left(g_{i,j}^{\pm,0}\right)_{N \times N} + \left(g_{i,j}^{\pm,1}(\lambda)\right)_{N \times N},
			$$
			where $\left(g_{i,j}^{\pm,0}\right)_{N \times N}$ corresponds to the operator $\left(I + P G_0 P\right)^{-1}$ with 
			$$
			g_{i,j}^{\pm,0} = \langle \left(I + P G_0 P\right)^{-1} \varphi_i, \varphi_j \rangle,
			$$
			and $\left(g_{i,j}^{\pm,1}(\lambda)\right)_{N \times N}$ corresponds to $L^\pm(\lambda)$ with
			$$
			g_{i,j}^{\pm,1}(\lambda) = \langle L^\pm(\lambda) \varphi_i, \varphi_j \rangle.
			$$
			The results of this lemma follow directly from this decomposition.
		\end{proof}
		
		Next, we turn to the cases $d=1,2$.
		Recall that in the rank-one case, the $L^p$-boundedness  at the endpoints $p = 1$ and $p = \infty$ depends on whether $\int_{\mathbb{R}^d} \varphi(x) \, dx = 0$.
		Without loss of generality, we assume there exists $0 \leq k_0 \leq N$ such that
		\begin{equation}\label{eq4.16.1}
			\int_{\mathbb{R}^d} \varphi_i(x) \, \d x \neq 0 \quad \text{for } 1 \leq i \leq k_0; \qquad 
			\int_{\mathbb{R}^d} \varphi_i(x) \, \d x = 0 \quad \text{for } k_0 < i \leq N.
		\end{equation}
		Define  the quantity
		$$\sigma = \left( \sum_{j=1}^N \left(\int_{\R^d} \varphi_j(x) \, \d x\right)^2 \right)^{1/2}.$$
		\textbf{Key observations: } (i) $\sigma = 0$ if and only if $k_0 = 0$. (ii) If $k_0 > 0$, we point out that the analysis can be simplified by normalizing to the case $k_0=1$. More precisely, set
		$$
		\psi_1 = \frac{1}{\sigma} \sum_{j=1}^N \left( \int \varphi_j(x) \, \d x \right) \varphi_j\ne 0.
		$$
		Let $\{\psi_j\}_{j=2}^N$ be an orthonormal basis of the subspace $\{\psi_1\}^\perp \cap PL^2(\mathbb{R}^d)$. Then $\{\psi_j\}_{j=1}^N$ forms an orthonormal basis of $PL^2(\mathbb{R}^d)$. This allows us to rewrite $P$ as
		$$
		P = \sum_{j=1}^N \langle \cdot, \psi_j \rangle \psi_j.
		$$
		Since $\psi_j(x)=\sum_{k=1}^Na_{j,k}\varphi_k$, where $a_{j,k}\in \R$, $2\le j\le N$. It follows that conditions  \eqref{eq-F matrix}--\eqref{eq-spectral condition-finite rank} remain valid when  replacing $\varphi_j$ with $\psi_j$. Moreover, we have
		$$
		\int_{\mathbb{R}^d} \psi_j(x) \, \d x =\sum_{k=1}^Na_{j,k}\int \varphi_j(x) \, \d x=\sigma \langle \psi_j, \psi_1 \rangle= 0 \quad \text{for } j = 2,\ldots,N.
		$$
		This construction implies that all cases with $k_0 \geq 2$ are equivalent to the case $k_0 = 1$ and proves the observation above.
		
		Thus we  restrict our analysis to the cases $k_0 = 0$ (addressed in Lemma \ref{lemma:g_ij-low energy-2}) and $k_0 = 1$ (addressed in Lemma \ref{lemma:g_ij-low energy-3}) within \eqref{eq4.16.1}.

		\begin{lemma}\label{lemma:g_ij-low energy-2}
			Let $d=1$ or $d=2$. Assume that $k_0=0$ in  \eqref{eq4.16.1}. Then there exists some small $\lambda_0>0$ such that  for $0<\lambda<\lambda_0$,
			\begin{equation}\label{eq:g_ij-low energy-2}
				\Big|\frac{\d^k}{\d\lambda^k}g_{i,j}^{\pm}(\lambda)\Big|\lesssim \lambda^{-k},\ \ \    k=0,1,2.
			\end{equation}
		\end{lemma}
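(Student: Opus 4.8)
The plan is to adapt the proof of Lemma \ref{lemma:g_ij-low energy-1}, working with the matrix $A^\pm(\lambda)=\mathbb I_N+F^\pm_{N\times N}(\lambda^2)$, whose inverse is $\big(g_{i,j}^\pm(\lambda)\big)_{N\times N}$, but now tracking the singular part of $R_0^\pm(\lambda^2)$ at $\lambda=0$. The decisive observation is a cancellation: by the low-energy expansions \eqref{eq: exp for reso d=1} and \eqref{eq: exp for reso d=2} (this is exactly the computation in Lemma \ref{lemma-F low energy-d=1,2}, carried out with $\varphi_i,\varphi_j$ in place of $\varphi$), for $0<\lambda<\tfrac12$ one has
$$
\langle R_0^\pm(\lambda^2)\varphi_i,\varphi_j\rangle
= s_d^\pm(\lambda)\Big(\int_{\mathbb R^d}\varphi_i\,\d x\Big)\Big(\int_{\mathbb R^d}\varphi_j\,\d x\Big)+b_{ij}+r_{d,ij}^\pm(\lambda),
$$
where $s_1^\pm(\lambda)=\pm\tfrac i2\lambda^{-1}$ and $s_2^\pm(\lambda)=\tfrac{\pm i}4-\tfrac1{2\pi}\gamma+\tfrac1{2\pi}\log 2-\tfrac1{2\pi}\log\lambda$ carry the singularity, the $b_{ij}$ are fixed constants coming from the $|x-y|$ (resp.\ $\log|x-y|$) term, and $\big|\partial_\lambda^k r_{d,ij}^\pm(\lambda)\big|\lesssim\lambda^{1/2-k}$ for $k=0,1,2$ (all pairings are finite because of \eqref{eq-decay condition} with $\delta>d+2$). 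Since $k_0=0$ means $\int_{\mathbb R^d}\varphi_i\,\d x=0$ for every $i$, the singular coefficient $s_d^\pm(\lambda)$ is annihilated, and we get the \emph{nonsingular} decomposition $A^\pm(\lambda)=A_0+\mathcal R^\pm(\lambda)$, in which $A_0$ is a fixed matrix and the remainder matrix $\mathcal R^\pm(\lambda)$ obeys $\|\partial_\lambda^k\mathcal R^\pm(\lambda)\|\lesssim\lambda^{1/2-k}$, $k=0,1,2$.

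Next I would show that $A_0$ is invertible with $\|A_0^{-1}\|\lesssim1$. The decomposition above shows that $\lambda\mapsto\det A^\pm(\lambda)$ extends continuously to $\lambda=0$ with value $\det A_0$; together with the spectral assumption \eqref{eq-spectral condition-finite rank}, i.e.\ $|\det A^\pm(\lambda)|\ge c_0>0$ for $\lambda>0$, this forces $|\det A_0|\ge c_0>0$. Since the entries of $A_0$ are bounded (again by \eqref{eq-decay condition}), Cramer's rule gives $\|A_0^{-1}\|\lesssim1$. Choosing $\lambda_0>0$ so small that $\|\mathcal R^\pm(\lambda)\|\,\|A_0^{-1}\|\le\tfrac12$ on $(0,\lambda_0)$, the Neumann series then shows that $A^\pm(\lambda)$ is invertible there with $\|A^\pm(\lambda)^{-1}\|\lesssim1$; this is the case $k=0$ of \eqref{eq:g_ij-low energy-2}.

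Finally, for the derivative bounds I would use the matrix-inverse formula \eqref{eq4.14.3}: since $A_0$ is constant, $\partial_\lambda^l A^\pm(\lambda)=\partial_\lambda^l\mathcal R^\pm(\lambda)$ for $l\ge1$, so for $k=1,2$,
$$
\frac{\d^k}{\d\lambda^k}A^\pm(\lambda)^{-1}=A^\pm(\lambda)^{-1}\sum_{s=1}^k\sum_{l_1+\cdots+l_s=k}C(l_1,\dots,l_s)\prod_{m=1}^s\Big[\big(\partial_\lambda^{l_m}\mathcal R^\pm(\lambda)\big)A^\pm(\lambda)^{-1}\Big],
$$
and inserting $\|A^\pm(\lambda)^{-1}\|\lesssim1$ together with $\|\partial_\lambda^{l_m}\mathcal R^\pm(\lambda)\|\lesssim\lambda^{1/2-l_m}$ bounds each summand by $\lambda^{s/2-k}\lesssim\lambda^{1/2-k}\lesssim\lambda^{-k}$ on $(0,\lambda_0)$, which is \eqref{eq:g_ij-low energy-2} for $k=1,2$. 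I expect the only genuinely substantive point to be the cancellation of $s_d^\pm(\lambda)$ in the first step — precisely where the hypothesis $k_0=0$ enters; after that the argument is the same Neumann-series and Cramer's-rule routine already used in Lemma \ref{lemma:g_ij-low energy-1} and Lemma \ref{lemma-g_ij-high energy}.
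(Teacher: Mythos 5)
Your proof is correct and follows essentially the same route as the paper: the key point in both is that the singular part of the low-energy resolvent expansion is annihilated because $\int_{\mathbb{R}^d}\varphi_i\,\d x=0$ for every $i$, after which the entries $f_{i,j}^{\pm}$ and their derivatives obey the stated bounds and the spectral assumption $|\det A^{\pm}(\lambda)|\ge c_0$ together with the adjugate and derivative formulas for the inverse matrix yields \eqref{eq:g_ij-low energy-2}. The only difference is cosmetic: the paper applies the lower bound on $\det A^{\pm}(\lambda)$ and Cramer's rule directly for each $\lambda>0$, so your intermediate step of extracting the constant matrix $A_0$, proving its invertibility by continuity, and running a Neumann series is not needed, though it is harmless.
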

		\begin{proof}
			For  $d = 1$ with $\int_{\mathbb{R}^d} \varphi_i(x) \, \d x = 0$  for all $1\le i \le N$, we use the expansion \eqref{eq: exp for reso d=1} to express $f_{i,j}^{\pm}(\lambda^2)$ as
			$$
			f_{i,j}^{\pm}(\lambda^2) = \langle G_0 \varphi_i, \varphi_j \rangle + \iint r_{1}^{\pm}(\lambda|x - y|) \varphi_i(x) \varphi_j(y) \, \d x \d y,
			$$
			where $G_0$ denotes the operator with integral kernel $-\frac{1}{2}|x - y|$. By \eqref{eq: est for the int kernel of r_1}, we have, for $0 < \lambda < \frac{1}{2}$,
			\begin{equation}\label{eq: est for PrP}
				\left| \frac{\d^k}{\d\lambda^k} \left( f_{i,j}^{\pm}(\lambda^2) \right) \right| \lesssim \lambda^{-k}, \quad  \quad k = 0, 1, 2.
			\end{equation}
			Combining \eqref{eq4.14.2} with the spectral assumption \eqref{eq-spectral condition-finite rank} proves \eqref{eq:g_ij-low energy-2} for $d = 1$.
			
			The case $d = 2$ follows analogously by using \eqref{eq: exp for reso d=2} and \eqref{eq: exp for reso d=2-76}, whose details we omit. 
		\end{proof}

		\begin{lemma}\label{lemma:g_ij-low energy-3}
			Let $d=1$ or $d=2$. Assume that  $k_0=1$ in \eqref{eq4.16.1}.	Then there exists a small $\lambda_0>0$ such that for $0<\lambda<\lambda_0$, the following asymptotic expansions hold for all  $1\le i, j\le N$: 
			\begin{align}\label{eq: exp for g low d=1}
				g_{i,j}^{\pm}(\lambda)=
				\begin{cases}
					g_{i,j}^{\pm,0}(\lambda)+\lambda g_{i,j}^{\pm,1}+\lambda^2 g_{i,j}^{\pm,2}(\lambda),&\text{if}\,\, d=1,\\[4pt]
					g_{i,j}^{\pm,0}(\lambda)+(\log\lambda)^{-1}g_{i,j}^{\pm,1}+(\log\lambda)^{-2}g_{i,j}^{\pm,2}(\lambda),&\text{if}\,\,d=2,\\[4pt]
				\end{cases}
			\end{align} 
			where $g_{i,j}^{\pm,1}$ ($1\le i, j\le N$) are constants such that 
			\begin{equation}\label{eq:what is D_1^pm-0}
				0\neq g_{1,1}^{\pm,1}= 
				\begin{cases}
					\mp 2i(\int \varphi_1)^{-2} , &\quad n=1,\\ 
					-2\pi (\int \varphi_1)^{-2} , &\quad n=2,\\ 
				\end{cases}
			\end{equation}  
			and
			\begin{equation}\label{eq:g_ij0}
				g_{i,j}^{\pm,0}(\lambda)\equiv0, \quad \text{if\,\, either}\,\,\, i=1\,\,\, \text{or}\,\,\, j=1.
			\end{equation}
			Moreover, for $l=0,2$,
			\begin{equation}\label{eq:g_ij-low energy-3}
				\begin{split}
					\Big|\frac{\d^k}{\d\lambda^k}g_{i,j}^{\pm,l}(\lambda)\Big|\lesssim \lambda^{-k}, \quad k=0,1,2.
				\end{split}
			\end{equation}
		\end{lemma}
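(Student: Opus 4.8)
The plan is to analyze the matrix $A^\pm(\lambda)=\mathbb{I}_N+F^\pm_{N\times N}(\lambda^2)$ directly, isolating the single entry that becomes singular as $\lambda\to 0$ and then inverting by a Schur-complement (Feshbach) argument in the spirit of Jensen--Kato \cite{J80}. Throughout we are in the case $k_0=1$ of \eqref{eq4.16.1}, so $\int_{\mathbb{R}^d}\varphi_1(x)\,\d x\neq 0$ while $\int_{\mathbb{R}^d}\varphi_i(x)\,\d x=0$ for $2\le i\le N$. First I would insert the free-resolvent expansions \eqref{eq: exp for reso d=1} (for $d=1$) and \eqref{eq: exp for reso d=2} (for $d=2$) into $f_{i,j}^\pm(\lambda)=\langle R_0^\pm(\lambda^2)\varphi_i,\varphi_j\rangle$. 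The leading singular kernel of $R_0^\pm(\lambda^2)$ is the \emph{constant} $\pm\frac{i}{2\lambda}$ when $d=1$, respectively $\big(\frac{\pm i}{4}-\frac{\gamma}{2\pi}+\frac{\log 2}{2\pi}-\frac{\log\lambda}{2\pi}\big)$ when $d=2$; pairing a constant kernel against $\varphi_i$ produces a factor $\int\varphi_i\,\d x$, so the mean-zero condition on $\varphi_2,\dots,\varphi_N$ annihilates the singular contribution in every entry except the $(1,1)$ one. Writing $A^\pm(\lambda)$ in block form relative to the splitting $\{1\}\cup\{2,\dots,N\}$,
\[
A^\pm(\lambda)=\begin{pmatrix} a_{11}^\pm(\lambda) & \mathbf{b}^\pm(\lambda)^{T}\\ \mathbf{c}^\pm(\lambda) & M^\pm(\lambda)\end{pmatrix},\qquad a_{11}^\pm(\lambda)=\kappa_d^\pm\,\omega_d(\lambda)\Big(\textstyle\int_{\mathbb{R}^d}\varphi_1\,\d x\Big)^{2}+h^\pm(\lambda),
\]
where $\omega_1(\lambda)=\lambda^{-1}$, $\omega_2(\lambda)=\log\lambda$, $\kappa_1^\pm=\pm\frac{i}{2}$, $\kappa_2^\pm=-\frac{1}{2\pi}$, and $h^\pm(\lambda)$, the $1\times(N-1)$ and $(N-1)\times 1$ vectors $\mathbf{b}^\pm,\mathbf{c}^\pm$, and the $(N-1)\times(N-1)$ block $M^\pm(\lambda)$ are all ``regular'': by \eqref{eq: est for the int kernel of r_1}, \eqref{eq: exp for reso d=2-76} and the decay \eqref{eq-decay condition} they extend continuously to $\lambda=0$ with $\big|\partial_\lambda^k(\cdot)\big|\lesssim\lambda^{1/2-k}$ for $k=0,1,2$. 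Moreover $M^\pm(0)=\mathbb{I}_{N-1}+\big(\langle G_0\varphi_i,\varphi_j\rangle\big)_{2\le i,j\le N}$, with $G_0=(-\Delta)^{-1}$ given by the kernel $-\tfrac12|x-y|$ ($d=1$) or $-\tfrac1{2\pi}\log|x-y|$ ($d=2$).

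The key structural input is that $M^\pm(0)$ is invertible. Indeed, on the subspace of mean-zero functions $G_0=(-\Delta)^{-1}$ acts as a nonnegative operator: by Plancherel, $\langle G_0 g,g\rangle=c_d\int_{\mathbb{R}^d}|\widehat{g}(\xi)|^{2}|\xi|^{-2}\,\d\xi\ge 0$ whenever $\int g\,\d x=0$ (this is exactly the regime in which $-\tfrac12|x-y|$ and $-\tfrac1{2\pi}\log|x-y|$ define the inverse Laplacian). Hence the Gram-type matrix $\big(\langle G_0\varphi_i,\varphi_j\rangle\big)_{2\le i,j\le N}$ is positive semidefinite and $M^\pm(0)\succeq\mathbb{I}_{N-1}$ is invertible. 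Note that the spectral assumption \eqref{eq-spectral condition-finite rank} plays no role for small $\lambda$: since $a_{11}^\pm(\lambda)$ is large there, $A^\pm(\lambda)$ is automatically invertible once $M^\pm(0)$ is.

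Finally I would carry out the block inversion. Since $a_{11}^\pm(\lambda)\to\infty$, it is a nonvanishing scalar for small $\lambda$, and expanding the reciprocal as a Neumann series in $h^\pm(\lambda)\big/\big(\kappa_d^\pm(\int\varphi_1\,\d x)^2\omega_d(\lambda)\big)$ gives $\big(a_{11}^\pm(\lambda)\big)^{-1}=\big(\kappa_d^\pm\big)^{-1}\big(\int\varphi_1\,\d x\big)^{-2}\omega_d(\lambda)^{-1}+(\text{faster vanishing})$, which is $O(\lambda)$ for $d=1$ and $O((\log\lambda)^{-1})$ for $d=2$, with derivative bounds matching \eqref{eq:g_ij-low energy-3}. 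The Schur complement $S^\pm(\lambda)=M^\pm(\lambda)-\mathbf{c}^\pm(\lambda)\big(a_{11}^\pm(\lambda)\big)^{-1}\mathbf{b}^\pm(\lambda)^{T}$ then converges to the invertible matrix $M^\pm(0)$, hence is invertible for small $\lambda$ with $\big|\partial_\lambda^k(S^\pm)^{-1}\big|\lesssim\lambda^{1/2-k}$. Feeding this into the standard block-inverse identity
\[
\big(A^\pm\big)^{-1}=\begin{pmatrix}(a_{11}^\pm)^{-1}+(a_{11}^\pm)^{-1}\mathbf{b}^\pm(S^\pm)^{-1}\mathbf{c}^\pm(a_{11}^\pm)^{-1} & -(a_{11}^\pm)^{-1}\mathbf{b}^\pm(S^\pm)^{-1}\\ -(S^\pm)^{-1}\mathbf{c}^\pm(a_{11}^\pm)^{-1} & (S^\pm)^{-1}\end{pmatrix}
\]
reads off everything: the lower-right block is $(S^\pm(\lambda))^{-1}$, which gives the $g_{i,j}^{\pm,0}(\lambda)$ for $i,j\ge 2$ obeying \eqref{eq:g_ij-low energy-3}; every entry in row $1$ or column $1$ carries a factor $(a_{11}^\pm)^{-1}$ and therefore vanishes to first order in $\omega_d(\lambda)^{-1}$, which yields \eqref{eq:g_ij0} and, after extracting the leading coefficient, the expansion \eqref{eq: exp for g low d=1}; the $(1,1)$ entry contributes in addition the $O(\omega_d(\lambda)^{-2})$ cross term. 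In particular $g_{1,1}^{\pm,1}=\big(\kappa_d^\pm\big)^{-1}\big(\int\varphi_1\,\d x\big)^{-2}$, i.e. $g_{1,1}^{\pm,1}=\mp 2i\big(\int\varphi_1\,\d x\big)^{-2}$ when $d=1$ and $g_{1,1}^{\pm,1}=-2\pi\big(\int\varphi_1\,\d x\big)^{-2}$ when $d=2$, which is nonzero and matches \eqref{eq:what is D_1^pm-0}; the remainders $g_{i,j}^{\pm,2}(\lambda)$ absorb the Neumann tails and satisfy \eqref{eq:g_ij-low energy-3} via the Leibniz and chain rules.

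I expect the main obstacle to be the invertibility of $M^\pm(0)$ — this is precisely where mean-zeroness of $\varphi_2,\dots,\varphi_N$ is genuinely used, through the positivity of the inverse Laplacian on mean-zero functions — together with the bookkeeping of the $\lambda^{1/2-k}$ derivative bounds through the reciprocal of $a_{11}^\pm$ and through $(S^\pm)^{-1}$; the decay hypothesis $\delta>d+2$ in \eqref{eq-decay condition} enters exactly to control the remainders in \eqref{eq: est for the int kernel of r_1} and \eqref{eq: exp for reso d=2-76}. Once this lemma, Lemma \ref{lemma:g_ij-low energy-2} and Lemma \ref{lemma-g_ij-high energy} are in hand, $g_{i,j}^\pm(\lambda)$ enjoys exactly the properties that $G_+^{\alpha}(\lambda)$ did in the rank-one analysis, so Theorem \ref{thm-main result-finite rank} follows by repeating the arguments of Sections \ref{sec3}--\ref{sec4} term by term in the sum \eqref{eq-W1W2}.
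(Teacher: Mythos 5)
Your proposal is correct and follows essentially the same route as the paper: the paper isolates the singular direction by splitting $PL^2=P_1L^2\oplus P_2L^2$, rescales with $T=(P_1,\lambda^{-1/2}P_2)$ and inverts via a Neumann series, which is just the operator-level version of your block/Schur-complement inversion, and both arguments rest on the same two facts (nonvanishing of $\int\varphi_1$, and invertibility of $P_2+P_2G_0P_2$ --- your $M^{\pm}(0)$ --- through the positivity of $(-\Delta)^{-1}$ on mean-zero functions). One small correction: by \eqref{eq: est for the int kernel of r_1} and \eqref{eq: exp for reso d=2-76} the regular blocks differ from their values at $\lambda=0$ by terms with derivative bounds $\lesssim\lambda^{1-k}$ (not $\lambda^{1/2-k}$ as you wrote), and this stronger bound is exactly what is needed so that the row-one/column-one remainders come out as $\lambda^{2}g_{i,j}^{\pm,2}(\lambda)$ (resp.\ $(\log\lambda)^{-2}g_{i,j}^{\pm,2}(\lambda)$) with \eqref{eq:g_ij-low energy-3}, rather than only $O(\lambda^{3/2})$.
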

		\begin{proof}
			The key is to  establish the asymptotic expansion of $(I + P R_0^\pm(\lambda^2) P)^{-1}$ near zero energy. To overcome the singularity of $R_0^\pm(\lambda^2)$ at $\lambda=0$, our key idea is to first decompose the space $PL^2$ into an appropriate direct sum and then transform the invertibility problem into finding the inverse of the corresponding matrix operator.
			
			We begin with the case $d = 1$. Using the expansion \eqref{eq: exp for reso d=1}, we obtain
			\begin{equation}\label{eq4.19}
				I + P R^{\pm}_0(\lambda^2) P = \frac{\pm i}{2\lambda} P L_1 P + (I + P G_0 P) + \mathcal{R}_1^{\pm}(\lambda),
			\end{equation}
			where 
			\begin{equation*}
				L_1 f := \int_{\mathbb{R}} f(x) \, \d x, \quad 
				G_0 f := -\frac{1}{2} \int_{\mathbb{R}} |x - y| f(y) \, \d y,
			\end{equation*}
			and
			\begin{equation*}
				\mathcal{R}^{\pm}_1(\lambda) f := P \int_{\mathbb{R}} r^{\pm}_1(\lambda, |x - y|) P f(y) \, \d y.   
			\end{equation*}
			By \eqref{eq-decay condition} and  \eqref{eq: est for the int kernel of r_1}, we have 
			\begin{align*}
				\left\| \partial_\lambda^{k} \mathcal{R}^{\pm}_1(\lambda) \right\|_{L^2 \to L^2} \lesssim \lambda^{1 - k}, \quad 0 < \lambda < 1/2, \quad k = 0, 1, 2.
			\end{align*}
			Define two projections
			$$
			P_1 = \langle \cdot, \varphi_1 \rangle \varphi_1 \quad \text{and} \quad P_2 = \sum_{j=2}^N \langle \cdot, \varphi_j \rangle \varphi_j.
			$$
			Since   $k_0=1$ in \eqref{eq4.16.1}, we thus obtain that
			\begin{equation}\label{eq4.19-77}
				\int_\mathbb{R} P_2 f \, \d x = 0 \quad \text{and} \quad P L_1 P_2 = P_2 L_1 P = 0.
			\end{equation}
			For each $\lambda >0$, we define the isomorphism $T = (P_1, \lambda^{-1/2} P_2) : P_1 L^2 \oplus P_2 L^2 \to PL^2$ by
			$$
			Tf = f_1 + \lambda^{-1/2} f_2,\qquad f= (f_1, f_2).
			$$
			Let $T^*$ be the dual operator of $T$. If the matrix operator $T^*(I + P R_0^\pm(\lambda^2) P)T$  is invertible on $ P_1 L^2 \oplus P_2 L^2$, then $I + P R_0^\pm(\lambda^2) P$ is invertible on $PL^2$ and  (see \cite[Lemma 3.12]{JK})
			\begin{equation}\label{eq: inverse identity}
				(I + P R_0^\pm(\lambda^2) P)^{-1} = T \left(T^*(I + P R_0^\pm(\lambda^2) P)T\right)^{-1} T^*.
			\end{equation}
			Now we investigate the inverse of  $T^*(I + P R_0^\pm(\lambda^2) P)T$. Notice that by  \eqref{eq4.19} and \eqref{eq4.19-77}, the matrix  operator $T^*(I + P R_0^\pm(\lambda^2) P)T$ admits the expansion
			\begin{equation}\label{eq:operator-expansion}
				\begin{split}
					T^*(I + P R_0^\pm(\lambda^2) P)T &= \lambda^{-1} 
					\begin{pmatrix}
						\frac{\pm i}{2} P_1 L_1 P_1 & 0 \\
						0 & P_2 + P_2 G_0 P_2
					\end{pmatrix} 
					+ \lambda^{-1}
					\begin{pmatrix}
						\gamma_{0,0}^\pm(\lambda) & \gamma_{0,1}^\pm(\lambda) \\
						\gamma_{1,0}^\pm(\lambda) & \gamma_{1,1}^\pm(\lambda)
					\end{pmatrix} \\
					&:= \lambda^{-1} A_0^\pm + \lambda^{-1} \Gamma^\pm(\lambda),
				\end{split}
			\end{equation}
			where 
			\begin{align*}
				\gamma_{0,0}^\pm(\lambda) &= \lambda \left[ P_1 (I + P_2 G_0 P_2) P_1 + P_1 \mathcal{R}_1^\pm(\lambda) P_1 \right], \\
				\gamma_{0,1}^\pm(\lambda) &= \lambda^{1/2} \left[ P_1 (I + P G_0 P) P_2 + P_1 \mathcal{R}_1^\pm(\lambda) P_2 \right], \\
				\gamma_{1,0}^\pm(\lambda) &= \lambda^{1/2} \left[ P_2 (I + P G_0 P) P_1 + P_2 \mathcal{R}_1^\pm(\lambda) P_1 \right], \\
				\gamma_{1,1}^\pm(\lambda) &= P_2 \mathcal{R}_1^\pm(\lambda) P_2.
			\end{align*}
			We make the following observations concerning $ A_0^\pm$: (i) The operator $P_1 L_1 P_1$ is  invertible on $P_1 L^2$;  (ii)  The operator  $P_2 + P_2 G_0 P_2 = P_2 + P_2 (-\Delta)^{-1} P_2$ is invertible on $P_2L^2$. Thus, it follows that  $A_0^\pm$ is invertible  on $P_1 L^2 \oplus P_2 L^2$. Furthermore, by the Neumann series expansion, there exists $\lambda_0 > 0$ such that for $0 < \lambda< \lambda_0$, the operator $T^*(I + P R_0^\pm(\lambda^2) P)T$ is invertible on $P_1 L^2 \oplus P_2 L^2$ with expansion
			\begin{align}\label{eq-PP-7-8-1}
				\left(T^*(I + P R_0^\pm(\lambda^2) P)T\right)^{-1} 
				= \lambda \sum_{k=0}^\infty (A_0^\pm)^{-1} \left(-\Gamma^\pm(\lambda) (A_0^\pm)^{-1}\right)^k.
			\end{align}

			We claim  that when $0 < \lambda < \lambda_0$,  the following expansion holds on $PL^2$:
			\begin{equation}\label{eq-low energy expansions for operator-d=1}
				(I + P R_0^\pm(\lambda^2) P)^{-1} = P_2 \mathscr{R}_1^{\pm,0}(\lambda) P_2 + \lambda D_1^\pm + \mathscr{R}_1^{\pm,2}(\lambda),
			\end{equation}
			where $D_1^\pm \in \mathbb{B}(L^2)$. Moreover,  for $0 < \lambda < \lambda_0$, one has
			\begin{align}\label{eq-R-0-7-8-1}
				\|\partial_\lambda^k \mathscr{R}_1^{\pm,0}(\lambda)\|_{L^2 \to L^2} &\lesssim \lambda^{-k},\qquad k=0,1,2,
			\end{align}
			and 
			\begin{align}\label{eq-R-1-7-8-2}  
				\|\partial_\lambda^k \mathscr{R}_1^{\pm,2}(\lambda)\|_{L^2 \to L^2} &\lesssim \lambda^{2-k},\qquad k=0,1,2.
			\end{align}
			Note that the matrix representation of the operator $(I + P R_0^\pm(\lambda^2) P)^{-1}$ with respect to the basis $\{\varphi_j\}_{j=1}^n$ is precisely given by $(g_{i,j}^{\pm}(\lambda))$, i.e., 
			$$
			(g_{i,j}^{\pm}(\lambda))=\left\langle\varphi_i,\, (I + P R_0^\pm(\lambda^2) P)^{-1}\varphi_j\right \rangle.
			$$
			Consequently, \eqref{eq: exp for g low d=1} (for $d=1$), \eqref{eq:g_ij0} and \eqref{eq:g_ij-low energy-3} follow directly from the expansion  \eqref{eq-low energy expansions for operator-d=1} combined with the estimates \eqref{eq-R-0-7-8-1}-\eqref{eq-R-1-7-8-2}.

			We are left to prove \eqref{eq-low energy expansions for operator-d=1}-\eqref{eq-R-1-7-8-2}. By \eqref{eq: inverse identity} and \eqref{eq-PP-7-8-1}, we have
			\begin{align}\label{eq: expansion for inverse-1}
				(I + P R_0^\pm(\lambda^2) P)^{-1} &= \lambda  T(A_0^\pm)^{-1}T^*- \lambda T(A_0^\pm)^{-1} \Gamma^\pm(\lambda) (A_0^\pm)^{-1}T^* 
				\nonumber\\
				&+  \lambda \sum_{k=2}^\infty T(A_0^\pm)^{-1} \left(-\Gamma^\pm(\lambda) (A_0^\pm)^{-1}\right)^kT^*.
			\end{align}
			For the first term on the right-hand side of \eqref{eq: expansion for inverse-1}, it follows from the definition of $(A_0^\pm)^{-1}$ (\eqref{eq:operator-expansion}) that
			\begin{equation}\label{eq-7-8-01}
				\lambda T(A_0^\pm)^{-1}T^* = P_2 (P_2 + P_2 G_0 P_2)^{-1} P_2 \mp 2i\lambda P_1 (P_1 L_1 P_1)^{-1} P_1.  
			\end{equation}
			For the second term, we further decompose $\Gamma^{\pm}(\lambda)$ into
			\begin{align*}
				\Gamma^{\pm}(\lambda)=\Gamma_0^{\pm}(\lambda)+\Gamma_1^{\pm}(\lambda),
			\end{align*}
			where
			\begin{align*}
				\Gamma_0^{\pm}(\lambda)&=\begin{pmatrix}
					\lambda P_1 (I+P_2 G_0P_2 ) P_1  &   \lambda^{\frac12}P_1 (I+PG_0P) P_2  \\
					\lambda^{\frac12}P_2 (I+PG_0P) P_1             	& 0
				\end{pmatrix},
			\end{align*}
			and
			\begin{align*}
				\Gamma_1^{\pm}(\lambda)&=\begin{pmatrix}
					\lambda P_1 \mathcal{R}^{\pm}_1 (\lambda)P_1  &   \lambda^{\frac12}P_1 \mathcal{R}^{\pm}_1(\lambda) P_2  \\
					\lambda^{\frac12}P_2 \mathcal{R}^{\pm}_1(\lambda) P_1                    
					& P_2 \mathcal{R}^{\pm}_1(\lambda) P_2 
				\end{pmatrix}.  \\ 
			\end{align*}
			A direct computation yields that
			\begin{equation}\label{eq-7-9-01}
				\lambda T(A_0^\pm)^{-1} \Gamma_0^\pm(\lambda) (A_0^\pm)^{-1}T^*=\lambda P_1d_{1,1}^\pm P_2 + \lambda P_2d_{1,2}^\pm P_1+\lambda^2P_1d_{1,2}^\pm P_1,
			\end{equation}
			and 
			\begin{equation}\label{eq-7-9-02}
				\lambda T(A_0^\pm)^{-1} \Gamma_1^\pm(\lambda) (A_0^\pm)^{-1}T^*=P_2\mathscr{T}_1^{\pm,1}(\lambda)P_2+\mathscr{T}_2^{\pm,2}(\lambda),
			\end{equation}
			where $d_{1,s}^\pm$, $\mathscr{T}_1^{\pm,s}(\lambda)$ ($s=1,2,3$) are bounded operators  on $L^2$.  
			Moreover,  for $0 < \lambda < \lambda_0$, one has
			\begin{align}\label{eq-R-0-7-8-21}
				\|\partial_\lambda^k \mathscr{T}_1^{\pm,1}(\lambda)\|_{L^2 \to L^2} &\lesssim \lambda^{1-k},\qquad k=0,1,2,
			\end{align}
			and 
			\begin{align}\label{eq-R-1-7-8-22}  
				\|\partial_\lambda^k \mathscr{T}_1^{\pm,2}(\lambda)\|_{L^2 \to L^2} &\lesssim \lambda^{2-k},\qquad k=0,1,2.
			\end{align}
			We remark that by \eqref{eq-7-8-01} and \eqref{eq-7-9-01}, there is 
			\begin{equation}\label{eq:what is D_1^pm-1}
				D_{1}^{\pm}=\mp 2iP_1 (P_1 L_1 P_1)^{-1} P_1-P_1d_{1,1}^\pm P_2 + \lambda P_2d_{1,2}^\pm P_1.    
			\end{equation}
			This identity implies that 
			\begin{equation}\label{eq:what is D_1^pm-2}
				\big\langle D_{1}^{\pm}\varphi_1, \varphi_1\big\rangle= \mp 2i\big\langle (P_1 L_1 P_1)^{-1}\varphi_1,\, \varphi_1\big\rangle=\mp 2i(\int \varphi_1)^{-2}.    
			\end{equation}
			For the higher order terms ($k\ge 2$) on the right-hand side of  \eqref{eq: expansion for inverse-1}, one can use the following formal calculation: 
			\begin{equation*}
				\begin{pmatrix}
					\lambda & \lambda^{1/2} \\
					\lambda^{1/2} & \lambda
				\end{pmatrix}^k 
				= \begin{pmatrix}
					O(\lambda) & O(\lambda^{3/2}) \\
					O(\lambda^{3/2}) & O(\lambda)
				\end{pmatrix} \quad \text{for } k \geq 2 \text{ and } 0 < \lambda \ll 1,
			\end{equation*}
			and thus they have the same form as in \eqref{eq-7-9-02}, but have better bounds with respect to $\lambda$. 
			
			Therefore, \eqref{eq-low energy expansions for operator-d=1}-\eqref{eq-R-1-7-8-2} follow by combining \eqref{eq: expansion for inverse-1}--\eqref{eq-R-1-7-8-22}.
			
			
			
			Now we prove for  $d = 2$. By \eqref{eq: exp for reso d=2}, we have
			\begin{equation*}
				I + P R^{\pm}_0(\lambda^2) P =\left( \frac{\pm i}{4}-\frac{1}{2\pi}\gamma+\frac{1}{2\pi}\log2-\frac{1}{2\pi}\log\lambda\right)PL_1P+I+PG_0P+\mathcal{R}_2^\pm(\lambda),	
			\end{equation*}
			where $\mathcal{R}_2^\pm(\lambda)$ satisfies, for $0 < \lambda < \frac12$,
			\begin{align*}
				\left\| \partial_\lambda^{k} \mathcal{R}^{\pm}_2(\lambda) \right\|_{L^2 \to L^2} \lesssim \lambda^{1 - k}, \ \quad k = 0, 1, 2.
			\end{align*}
			We denote by $T=(P_1, (-\log\lambda)^{\frac 12}Q_2)$.
			As in dimension $d=1$, we have  
			\begin{align}\label{eq:operator-expansion-1}
				T^*(I + P R_0^\pm(\lambda^2) P)T &= (-\log\lambda)
				\begin{pmatrix}
					\frac{1}{2\pi}P_1 L_1 P_1 & 0 \\
					0 & P_2 + P_2 G_0 P_2
				\end{pmatrix} 
				+ (-\log\lambda)
				\begin{pmatrix}
					\gamma_{0,0}^\pm(\lambda) & \gamma_{0,1}^\pm(\lambda) \\
					\gamma_{1,0}^\pm(\lambda) & \gamma_{1,1}^\pm(\lambda)
				\end{pmatrix} \nonumber\\
				&:= (-\log\lambda) A_0^\pm +(-\log\lambda)\Gamma^\pm(\lambda),
			\end{align}
			where 
			\begin{align*}
				\gamma_{0,0}^\pm(\lambda) &=  (-\log\lambda)^{-1}\left[c^{\pm}P_1 L_1 P_1+ P_1 (I + P_2 G_0 P_2) P_1 + P_1 \mathcal{R}_2^\pm(\lambda) P_1 \right], \\
				\gamma_{0,1}^\pm(\lambda) &= (-\log\lambda)^{-\frac 12} \left[ P_1 (I + P G_0 P) P_2 + P_1 \mathcal{R}_2^\pm(\lambda) P_2 \right], \\
				\gamma_{1,0}^\pm(\lambda) &= (-\log\lambda)^{-\frac 12}\lambda^{1/2} \left[ P_2 (I + P G_0 P) P_1 + P_2 \mathcal{R}_1^\pm(\lambda) P_1 \right], \\
				\gamma_{1,1}^\pm(\lambda) &= (-\log\lambda)^{-1}P_2 \mathcal{R}_1^\pm(\lambda) P_2.
			\end{align*}
			Here, $c^\pm= \frac{\pm i}{4}-\frac{1}{2\pi}\gamma+\frac{1}{2\pi}\log2$.  Following the same analysis as in the one-dimensional case, it follows that $A_0^\pm$ is invertible on $P_1 L^2 \oplus P_2 L^2$. Furthermore, for $0 < \lambda < \lambda_0$ with a sufficiently small $\lambda_0 > 0$, the operator $(I + P R_0^\pm(\lambda^2) P)^{-1}$ admits the expansion
			\begin{equation}\label{eq-low energy expansions for operator-d=1-b}
				(I + P R_0^\pm(\lambda^2) P)^{-1} = P_2 \mathscr{R}_2^{\pm,0}(\lambda) P_2 + (-\log\lambda)^{-1} D_2^\pm + \mathscr{R}_2^{\pm,1}(\lambda),
			\end{equation}
			and
			\begin{align*}
				\|\partial_\lambda^k \mathscr{R}_2^{\pm,0}(\lambda)\|_{L^2 \to L^2} &\lesssim \lambda^{-k}, \ \ \ \ \ \ 
				\|\partial_\lambda^k \mathscr{R}_2^{\pm,1}(\lambda)\|_{L^2 \to L^2} \lesssim \lambda^{2-k},\quad 0\le k\le 2.
			\end{align*}
			This yields \eqref{eq: exp for g low d=1} for  $d=2$ as well as properties \eqref{eq:g_ij0}--\eqref{eq:g_ij-low energy-3}. Similar to 
			\begin{equation}\label{eq:what is D_2^pm}
				\big\langle D_{2}^{\pm}\varphi_1, \varphi_1\big\rangle= -2\pi\big\langle (P_1 L_1 P_1)^{-1}\varphi_1,\, \varphi_1\big\rangle= -2\pi (\int \varphi_1)^{-2}.    
			\end{equation}
			
			Therefore, the proof of the lemma is complete.
		\end{proof}
		
		Having established all the properties of $g^+_{i,j}(\lambda)$,  we are in the position to prove Theorem \ref{thm-main result-finite rank}.
		
		\begin{proof}[Proof of Theorem \ref{thm-main result-finite rank}]
			We decompose $\mathcal{W}_{-,i,j}(H,H_0)$ (see \eqref{eq-W1W2}) into high and low energy components:
			\begin{align*}
				\mathcal{W}_{-,i,j}^{h} &= \frac{1}{\pi i}\int_0^\infty \lambda(1-\chi(\lambda)) g^+_{i,j}(\lambda) R_0^+(\lambda^2)\varphi_j \langle (R_0^+(\lambda^2)-R_0^-(\lambda^2))f, \varphi_i \rangle \d\lambda, \\
				\mathcal{W}_{-,i,j}^{l} &= \frac{1}{\pi i}\int_0^\infty \lambda \chi(\lambda) g^+_{i,j}(\lambda) R_0^+(\lambda^2)\varphi_j \langle (R_0^+(\lambda^2)-R_0^-(\lambda^2))f, \varphi_i \rangle \d\lambda.
			\end{align*}
			
			Following exactly the same approach as in the rank one case, we  analyze each component separately and point out that Lemmas  \ref{lemma-g_ij-high energy}-\ref{lemma:g_ij-low energy-3} can serve as replacements for  Lemmas \ref{lm3.2}, \ref{lem:est for high G} and \ref{lem: for int donse not vanish}  used previously. More precisely:
			\begin{enumerate}
				\item \textbf{High energy component:}\\
				Lemma  \ref{lemma-g_ij-high energy} indicates that $ g^+_{i,j}(\lambda)$ satisfies the same estimates as $G_+^{\alpha}(\lambda)$   in Lemma \ref{lem:est for high G}, thus we conclude that  $\mathcal{W}_{-,i,j}^{h}$ is bounded on $L^p(\mathbb{R}^d)$ for all $1 \leq p \leq \infty$. 
				\item \textbf{Low energy component:}\\
				The boundedness properties depend on the dimension $d$ and the value of $k_0$ in  \eqref{eq4.16.1}. Moreover,  as established in the \textbf{key observation} following \eqref{eq4.16.1}, we may restrict our analysis to  two cases: $k_0=0$ and $k_0=1$.
				\begin{itemize}
					\item For $d \geq 3$,   Lemma \ref{lemma:g_ij-low energy-1} (replacing  Lemma \ref{lm3.2}) ensures that $\mathcal{W}_{-,i,j}^{l}$ is bounded on $L^p(\mathbb{R}^d)$ for all $1 \leq p \leq \infty$ and $1 \leq i,j \leq N$;
					
					\item  For  $d=1,2$ with $k_0=0$,   Lemma \ref{lemma:g_ij-low energy-2} (analogous to  part (i) of Lemma \ref{lem: for int donse not vanish}) guarantees the $L^p$-boundedness of $\mathcal{W}_{-,i,j}^{l}$ for all $1\le  p \le \infty$ and $1 \leq i,j \leq N$;
					
					\item For $d=1,2$ with  $k_0=1$, if $2 \leq i,j \leq N$,   Lemma \ref{lemma:g_ij-low energy-3} shows that $g_{i,j}^\pm(\lambda)$ satisfies condition (i) in Lemma \ref{lem: for int donse not vanish}, so $\mathcal{W}_{-,i,j}^{l}$ remains 
					bounded on $L^p(\mathbb{R}^d)$ for all $1\le  p \le \infty$;
					
					\item  For $d=1,2$ with $k_0=1$, if $i=1$ and $j\neq1$ or $i\neq1$ and $j=1$,   Lemma~\ref{lemma:g_ij-low energy-3} shows $g_{i,j}^\pm(\lambda)$ satisfies condition (ii) (for $d=1$) or (iii) (for $d=2$) in Lemma~\ref{lem: for int donse not vanish}. This implies the boundedness of $\mathcal{W}_{-,i,j}^{l}$ on $L^p(\mathbb{R}^d)$ for all $1\le p \le \infty$ when $d=1$ and $1\le p <\infty$ when $d=2$, as established in Lemma~\ref{lem:for i=1 and jneq 1}.
					
					\item  For $d=1,2$ with $k_0=1$, if $i=1$ and $j=1$ ,   Lemma \ref{lemma:g_ij-low energy-3} shows that  $g_{i,j}^\pm(\lambda)$ satisfies conditions (ii) (for $d=1$) or (iii)  (for $d=2$) in Lemma \ref{lem: for int donse not vanish}. Consequently, $\mathcal{W}_{-,i,j}^{l}$ is bounded on $L^p(\mathbb{R}^d)$ for $1<p<\infty$ and bounded from $L^1$ to $L^{1,\infty}$, but unbounded on $L^1(\mathbb{R}^d)$.
				\end{itemize}
			\end{enumerate}
			
			By combining these results, we  complete the proof of Theorem \ref{thm-main result-finite rank}.
		\end{proof}
		
		\begin{lemma}\label{lem:for i=1 and jneq 1} 
			Let $k_0=1$ in \eqref{eq4.16.1}. 
			When either $i=1$ and $j\neq1$ or $i\neq1$ and $j=1$, the operator $\mathcal{W}_{-,i,j}^{l}$ is bounded on $L^p(\mathbb{R}^d)$ for all $1 \le p \le \infty$ if $d=1$ and $1 \le p <\infty$ if $d=2$.
		\end{lemma}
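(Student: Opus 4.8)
The plan is to observe that $\mathcal{W}_{-,i,j}^{l}$ carries exactly the integral representation of the rank-one low-energy wave operator treated in Section \ref{sec4}, with $G_+^{\alpha}(\lambda)$ replaced by $g_{i,j}^+(\lambda)$ and with $\varphi$ replaced by $\varphi_j$ in the left resolvent slot and by $\varphi_i$ in the right one. Two facts will be used throughout. First, by Lemma \ref{lemma:g_ij-low energy-3}, since $i=1$ or $j=1$ we have $g_{i,j}^{\pm,0}(\lambda)\equiv 0$, so that by \eqref{eq: exp for g low d=1} and \eqref{eq:g_ij-low energy-3} the function $\chi(\lambda)g_{i,j}^+(\lambda)$ vanishes like $\lambda$ as $\lambda\to 0$ when $d=1$, and like $(\log\lambda)^{-1}$ when $d=2$, together with the corresponding derivative bounds. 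Second, because $k_0=1$ in \eqref{eq4.16.1} and exactly one of $i,j$ equals $1$, exactly one of $\int_{\R^d}\varphi_i\,\d x$, $\int_{\R^d}\varphi_j\,\d x$ vanishes; I would treat the case $i=1$, $j\ge 2$ (so $\int_{\R^d}\varphi_j\,\d x=0$), the case $i\ge 2$, $j=1$ being handled identically after interchanging the two slots.

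For $d=1$ I would first write the kernel of $\mathcal{W}_{-,i,j}^{l}$, as in \eqref{eq-wave operator kernel-d=1}--\eqref{E-pm-503}, as a sum over $\pm$ of
$$\frac{-i}{4\pi}\int_0^\infty e^{i\lambda(|x|\pm|y|)}\lambda^{-1}\chi(\lambda)g_{i,j}^+(\lambda)\Big(\int_{\R}e^{i\lambda(|x-x_1|-|x|)}\varphi_j(x_1)\,\d x_1\Big)\Big(\int_{\R}e^{\pm i\lambda(|x_2-y|-|y|)}\varphi_i(x_2)\,\d x_2\Big)\d\lambda.$$
Since $\int_{\R}\varphi_j\,\d x=0$, the first inner integral equals $i\lambda\int_{\R}\int_0^{|x-x_1|-|x|}e^{i\lambda\theta_1}\varphi_j(x_1)\,\d\theta_1\,\d x_1$, whose $\lambda$-derivatives up to the needed order are bounded uniformly in $x$ by \eqref{eq-decay condition} (as in \eqref{eq-E-pm-0501}), and similarly the second inner integral has $\lambda$-derivatives bounded uniformly in $y$. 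The factor $i\lambda$ cancels $\lambda^{-1}$, while $|\partial_\lambda^k(\chi g_{i,j}^+)|\les\lambda^{1-k}\les\lambda^{\frac12-k}$ for $k=0,1,2$ by \eqref{eq:g_ij-low energy-3}; hence the full $\lambda$-symbol satisfies \eqref{symbol-est} with $b=\tfrac12$, uniformly in $x,y$, and Lemma \ref{oscillatory estimates} bounds the kernel of $\mathcal{W}_{-,i,j}^{l}$ by $\sum_\pm\langle|x|\pm|y|\rangle^{-3/2}$. This is admissible on $\R$, so $\mathcal{W}_{-,i,j}^{l}\in\mc B(L^p(\R))$ for all $1\le p\le\infty$.

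For $d=2$ I would exploit the factorisation used in the proof of Proposition \ref{theorem-low energy-d=1,2}, namely $\mathcal{W}_{-,i,j}^{l}=T_{\varphi_j,\varphi_i}\circ\big(g_{i,j}^+(\sqrt{-\Delta})\chi(\sqrt{-\Delta})\big)$ with $T_{\varphi_j,\varphi_i}=\tfrac{1}{\pi i}\int_0^\infty\lambda R_0^+(\lambda^2)\varphi_j\langle(R_0^+(\lambda^2)-R_0^-(\lambda^2))\cdot,\varphi_i\rangle\,\d\lambda$. Because $g_{i,j}^{\pm,0}\equiv 0$, the expansion \eqref{eq: exp for g low d=1} and the bounds \eqref{eq:g_ij-low energy-3} show that $\chi(|\xi|)g_{i,j}^+(|\xi|)$ satisfies \eqref{eq-psi-0504} with $a=1>\tfrac12$, so $g_{i,j}^+(\sqrt{-\Delta})\chi(\sqrt{-\Delta})\in\mc B(L^p(\R^2))$ for all $1\le p\le\infty$ by Lemma \ref{lem-Fou-multiplier}. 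The computation of Lemma \ref{lem-T-phi-0504} applies with $\varphi$ replaced by $\varphi_j$ and $\varphi_i$ in the two slots and yields $T_{\varphi_j,\varphi_i}=(\varphi_j*)\circ\mc T_K\circ(\widetilde\varphi_i*)$, where $\widetilde\varphi_i(x)=\varphi_i(-x)$ and $\mc T_K$ is the operator with kernel $PV\,\frac{1}{|x|^2-|y|^2}+\pi\delta(|x|^2-|y|^2)$, which is bounded on $L^p(\R^2)$ for $1<p<\infty$ and from $L^1$ to $L^{1,\infty}$. For $1<p<\infty$ the claim then follows by composing bounded operators. For $p=1$, in view of the $L^1$-boundedness of $g_{i,j}^+(\sqrt{-\Delta})\chi(\sqrt{-\Delta})$ and of convolution with $\widetilde\varphi_i\in L^1$, it is enough to upgrade $(\varphi_j*)\circ\mc T_K$ to an $L^1\to L^1$ operator: the $\pi\delta$ part of $\mc T_K$ is already bounded on $L^1$, and for the $PV$-part I would use the cancellation $\int_{\R^2}\varphi_j\,\d x=0$ to kill the leading $|x|^{-2}$ term in the large-$|x|$ asymptotics of $\mc T_K h$ (precisely the term obstructing $L^1(\R^2)$-integrability), reducing it to an $|x|^{-3}$ tail, while the contribution near the support of $h$ is controlled by $\|h\|_{L^1}^{1/2}\|h\|_{L^\infty}^{1/2}\les\|f\|_{L^1}$ via the $L^2$-boundedness of $\mc T_K$ and $\|\widetilde\varphi_i*f\|_{L^\infty}\les\|f\|_{L^1}$. (In the symmetric case $i\ge2$, $j=1$ the mean-zero factor $\varphi_i$ sits on the inside, so one instead checks that $\mc T_K$ maps mean-zero functions in $L^1\cap L^\infty$ into $L^1$, by the same decay improvement.) Combining these gives $\mathcal{W}_{-,i,j}^{l}\in\mc B(L^p(\R^2))$ for all $1\le p<\infty$.

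The hard part is the $p=1$ endpoint in dimension $d=2$: one must make quantitative the way a single cancellation condition $\int\varphi_j\,\d x=0$ turns the merely weak-type $(1,1)$ Riesz-type operator $T_{\varphi_j,\varphi_i}$ into a genuinely $L^1$-bounded one, which forces a careful split of $\mc T_K h$ into its local (Hilbert-transform) and global (slowly decaying) parts and a separate estimate of each after convolution. The $d=1$ case and the range $1<p<\infty$ for $d=2$ are comparatively routine once Lemma \ref{lemma:g_ij-low energy-3} is available.
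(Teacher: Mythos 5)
Your $d=1$ argument and your treatment of $1<p<\infty$ in $d=2$ (factoring through $T_{\varphi_j,\varphi_i}$ and the multiplier $g^+_{i,j}(\sqrt{-\Delta})\chi(\sqrt{-\Delta})$, via Lemmas \ref{lem-T-phi-0504} and \ref{lem-Fou-multiplier}) coincide with the paper's. The genuine gap is exactly at the point you flag as hard: $p=1$ in $d=2$. First, in the case $i=1$, $j\neq1$, your near/far splitting of $\mathcal{T}_K h$ is not available: $h=\widetilde\varphi_1 * Mf$ is neither compactly supported nor decaying (you only control $\|h\|_{L^1}+\|h\|_{L^\infty}\les\|f\|_{L^1}$), so there is no region of uniformly bounded measure on which the $L^2$/Cauchy--Schwarz bound $\|h\|_{L^1}^{1/2}\|h\|_{L^\infty}^{1/2}$ converts into an $L^1$ bound, and no uniform far zone where $\mathcal{T}_K h(x)=c|x|^{-2}+O(|x|^{-3})$ with derivative control; the Hilbert-type singular spheres $\{|x|=|v|\}$ of $\mathcal{T}_K h$ sit wherever $f$ puts mass, arbitrarily far out, and convolution with a mean-zero $\varphi_j$ gains decay only where one has derivative bounds on $\mathcal{T}_K h$, which you do not have globally. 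Second, the statement you invoke in the symmetric case $i\neq1$, $j=1$ --- that $\mathcal{T}_K$ maps mean-zero functions in $L^1\cap L^\infty$ into $L^1$ --- is false as a general principle: take $h$ equal to a unit positive bump near radius $1$ minus a unit bump near radius $R$; then the PV part of $\mathcal{T}_K h$ is comparable to $|u|^{-2}$ on the annulus $2<|u|<R/2$, so its $L^1$ norm is at least $c\log R$ while $\|h\|_{L^1}+\|h\|_{L^\infty}=O(1)$. What is true, and would rescue this case, is the molecular version: $\sup_y\|\mathcal{T}_K\big(\widetilde\varphi_i(\cdot-y)\big)\|_{L^1}<\infty$ because each translate is a uniformly localized mean-zero molecule, after which Minkowski's inequality gives the $L^1$ bound; the cancellation must be exploited atom by atom (equivalently, at the level of the kernel of $(\varphi_j*)\circ\mathcal{T}_K$ in the first case), not as a global moment condition on $h$.

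For comparison, the paper does not pass through the $T_\varphi$ factorization at $p=1$ at all. It returns to the stationary $\lambda$-representation, uses the vanishing integral of the mean-zero factor to replace the corresponding Bessel/Hankel factor by an integral of its derivative over a radial parameter $\theta$ (the same device as in the $d=2$ part of Proposition \ref{theorem-low energy-d=1,2}), estimates the resulting $\lambda$-integral by Lemma \ref{oscillatory estimates} to get pointwise kernel bounds of the type $\frac{1}{\theta_1\sqrt{r_2}\langle\theta_1-r_2\rangle^{3/2}}+\frac{1}{\sqrt{\theta_1 r_2}\langle\theta_1-r_2\rangle^{2-}}$, and then verifies the one-sided admissibility $\sup_y\int|\mathcal{W}^{l}_{-,i,j}(x,y)|\,\d x<\infty$ directly (which is precisely why only $1\le p<\infty$ is claimed in $d=2$). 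To close your argument you would need either this kernel-level route or the molecular reformulation of your convolution idea; as written, the $p=1$, $d=2$ step does not go through.
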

		\begin{proof}
			For  $d=1$, the proof is similar to the arguments used in Proposition~\ref{theorem-low energy-d=1,2}. We only consider the case $i=1$ and $j\neq 1$. Note the condition $\int_{\R} \varphi_j dx =0$.
			The kernel of $\mathcal{W}_{-,1,j}^{l}$ can be expressed as the difference of $ L_+(x,y)- L_-(x,y)$, where
			\begin{equation*}
				L_\pm(x,y)=\frac{i}{4\pi }\int_0^\infty e^{i\lambda(|x|\pm|y|)}\lambda^{-1} 
				g_{1,j}^{+}(\lambda)\chi(\lambda)E_j^\pm(\lambda;x,y)\d\lambda, 
			\end{equation*}
			with
			\begin{equation}\label{E-pm-503-1-1}
				E_j^\pm(\lambda; x,y)=\int_{\mathbb{R}}e^{i\lambda (|x-x_1|-|x|)}\varphi_1(x_1)\d x_1\int_{\mathbb{R}}e^{\pm i\lambda(|x_2-y|-|y|)}\varphi_j(x_2)\d x_2.
			\end{equation}
			Using the condition $\int_{\mathbb{R}}\varphi_j(x)\d x=0$, we rewrite $E_j^\pm$ as 
			\begin{align*}
				E_j^\pm(\lambda; x,y)=&\int_{\mathbb{R}}e^{i\lambda (|x-x_1|-|x|)}\varphi_1(x_1)\d x_1\int_{\mathbb{R}}\big(e^{\pm i\lambda(|x_2-y|-|y|)}-1\big)\varphi_j(x_2)\d x_2\nonumber\\
				=& i\lambda\int_{\mathbb{R}}e^{i\lambda (|x-x_1|-|x|)}\varphi_1(x_1)\d x_1\int_{\mathbb{R}}\int_0^{|y-x_2|-|y|}e^{i\lambda\theta_2}\varphi_j(x_2)\d\theta_2\d x_2.\\
			\end{align*}
			Observe that $g_{1,j}^{+}(\lambda)$ admits the decomposition:
			\begin{equation*}
				g_{1,j}^{+}(\lambda) = \lambda g_{1,j}^{\pm,1} + \lambda^2 g_{1,j}^{\pm,2}(\lambda).
			\end{equation*}
			Consequently, the integrand  $\lambda^{-1}g_{1,j}^{+}(\lambda)E_j^\pm(\lambda; x,y)$ also satisfy the estimate \eqref{eq-E-pm-0501-1}. Repeating the argument leading to \eqref{L-pm-0501-2}, we obtain
			\begin{equation*}
				|L_\pm(x,y)| \lesssim \langle |x| \pm |y| \rangle^{-\frac{3}{2}},
			\end{equation*}
			which establishes that $\mathcal{W}_{-,i,j}^{l}$ is an admissible operator.
			
			For dimension $d=2$,  $\mathcal{W}_{-,i,j}^{l}$ is bounded on $L^p$ for $1<p<\infty$ by the proof of Lemma \ref{lem-T-phi-0504} . We are left to consider the endpoint $p=1$.
			
			We first consider the case $i\neq 1$ and $j=1$. Note that each $\varphi_i$ satisfies $\int_{\R^2} \varphi_i(x) \d x =0$. Thus we can write the kernel of $\mathcal{W}_{-,i,j}^{l}$ as
			\begin{equation*}
				\begin{split}
					\mathcal{W}_{-,i,1}^l(x,y)
					=\frac{i}{ 8\pi }\int_{\mathbb{R}^4}\Big(\int_{\langle x\rangle}^{|x-x_1|}\int_0^\infty\lambda^2 g_{i,1}^{+}(\lambda)\chi(\lambda)(H^+_0)'(\lambda\theta_1)J_0(\lambda r_2)\d\lambda \d\theta_1\Big)\cdot\varphi(x_1)\varphi(x_2)\d x_1\d x_2,
				\end{split}
			\end{equation*}
			where  $r_2=|x_2-y|$.
			By the properties of the free resolvents in Section \ref{sec2}, one has 
			\begin{align*}
				&(H^+_0)'(z)=e^{iz}\frac{\eta_0(z)}{z}+ e^{iz}\frac{\eta_1(z)}{z^{\frac 12}}, \\  
				&J_0(z)=-2i\, e^{iz}\frac{J^+(z)}{z^\frac{1}{2}}+2ie^{-iz}\frac{J^{-}(z)}{z^\frac{1}{2}},
			\end{align*}
			where $\eta_{s}(z)$ ($s=0,1$) and $J^\pm(z)$ satisfy 
			\begin{equation}\label{eq:est for support funct-1111}
				\Big|\frac{\d^k \eta_s(z)}{\d z^k}\Big|+\Big|\frac{\d^k J^{\pm}(z)}{\d z^k}\Big|\lesssim |z|^{-k},\quad \,\,\,k\in\mathbb{N}_0.
			\end{equation}
			We first consider the oscillatory integral 
			$$k(\theta_1, r_2)=\int_0^\infty\lambda^2 g_{i,1}^{+}(\lambda)\chi(\lambda)(H^+_0)'(\lambda\theta_1)J_0(\lambda r_2)\d\lambda,$$
			which can be written as a sum of 
			$$\theta_1^{-1+\frac{s}{2}}r_2^{-\frac{1}{2}}\int_0^\infty e^{i\lambda(\theta_1\pm r_2)}\lambda^{1-\frac{s}{2}} g_{i,1}^{+}(\lambda)\eta_s(\lambda \theta_1)J^{\pm}(\lambda r_2)\chi(\lambda) \d\lambda.$$
			Applying Lemma \ref{oscillatory estimates}, we derive that 
			\begin{equation}\label{eq:est for k(theta_1, r_2)}
				|k(\theta_1, r_2)|\lesssim \frac{1}{\theta_1 \sqrt{  r_2}\langle\theta_1-r_2\rangle^{\frac 32}}+\frac{1}{ \sqrt{\theta_1r_2}\langle\theta_1-r_2\rangle^{2-}}.   
			\end{equation}
			Using the same argument to obtain \eqref{eq:to tilde k}, we have 
			\begin{align*}
				\big|\mathcal{W}_{-,i,1}^l(x,y)\big|
				&\les  \int_{\R^2}\int_{\R^2}\int_{\langle  x\rangle  }^{|x-x_1|} |k(\theta_1,r_2)|  \left| \varphi(x_2) \right|  
				\,\d\theta_1 \d x_2  \d x_1 \\
				&\les \int_{\R^2} \int_{0 }^{\infty}    \tilde{k}(\theta_1,r_2) \langle\theta_1-|x|\rangle  ^{-N} \left| \varphi(x_2)\right|  \d\theta_1 \d x_2,
			\end{align*}
			where $N=\frac{\delta}{2}>2$ and 
			$$\tilde{k}(\theta_1,r_2)=\frac{1}{\langle\theta_1 \rangle \sqrt{  r_2}\langle\theta_1-r_2\rangle^{\frac 32}}+\frac{1}{ \sqrt{\theta_1r_2}\langle\theta_1-r_2\rangle^{2-}}.$$
			This implies that 
			\begin{align*}
				\sup_{y} \Big\|\mathcal{W}_{-,i,1}^l(x,y)\Big\|_{L^1_x}
				&\les \sup_{y}\int_{\R^2} \int_{0}^{\infty}    \tilde{k}(\theta_1,r_2)\langle\theta_1\rangle   \left| \varphi(x_2)\right|  \d\theta_1 \d x_2 \\
				&\les  \int_{\R^2} \left| \varphi(x_2)\right|  \d x_2 \\
				&\les 1,
			\end{align*}
			which in turn yields the $L^1$-boundedness of $\mathcal{W}_{-,i,1}^l$.  
			
			In the case $i=1$ and $j\neq1$, noting that $\varphi_j $ satisfies $\int_{\R^2} \varphi_j(x) \d x =0$,  we thus write  
			$\mathcal{W}_{-,i,j}^{l}$ as
			\begin{equation*}
				\begin{split}
					\mathcal{W}_{-,1,j}^l(x,y)
					=\frac{i}{ 8\pi }\int_{\mathbb{R}^4}\Big(\int_{|y|}^{|x_2-y|}\int_0^\infty\lambda^2 g_{1,j}^{+}(\lambda)\chi(\lambda)H^+_0(\lambda r_1)J'_0(\lambda\theta_2)\d\lambda \d\theta_2\Big)\cdot\varphi(x_1)\varphi(x_2)\d x_1\d x_2,
				\end{split}
			\end{equation*}
			where  $r_1=|x_1-x|$.  And we have 
			\begin{align*}
				&H^+_0(z)=-4i\, e^{iz}\frac{\omega(z)}{z^{\frac 12}}, \\  
				&J_0'(z)=e^{iz}\frac{J_{1}^+(z)}{z^\frac{1}{2}}+2ie^{-iz}\frac{J_{1}^{-}(z)}{z^\frac{1}{2}},
			\end{align*}
			where  $\omega(z)$ and $J_{1}^\pm(z)$  satisfy the same estimates as in \eqref{eq:est for support funct-1111}. Then we consider the oscillatory integral of the following form 
			$$k(r_1, \theta_2)=\int_0^\infty\lambda^2 g_{1,j}^{+}(\lambda)\chi(\lambda)H^+_0(\lambda r_1)(J_0)'(\lambda\theta_2)\d\lambda,$$
			which can be written as a sum of
			$$r_1^{-\frac{1}{2}}\theta_2^{-\frac{1}{2}}\int_0^\infty e^{i\lambda(r_1\pm \theta_2)}\lambda \cdot g_{1,j}^{+}(\lambda)\omega(\lambda r_1)J_1^{\pm}(\lambda \theta_2)\chi(\lambda) \d\lambda,$$
			Applying Lemma \ref{oscillatory estimates} yields that 
			$$|k(r_1, \theta_2)|\lesssim \frac{1}{ \sqrt{r_1\theta_2}\langle r_1-\theta_2\rangle^{2-}}.$$
			Thus, we have
			$$\big|\mathcal{W}_{-,1,j}^l(x,y)\big|
			\les \int_{\R^2} \int_{0}^{\infty}    |k(r_1,\theta_2)| \langle \theta_2-|y|\rangle  ^{-N} \left| \varphi(x_1)\right|  \d\theta_2 \d x_1, $$
			which implies that the kernel $\mathcal{W}_{-,1,j}^l(x,y)$ is admissible.
		\end{proof}
		
		\begin{remark}
			For $d=2$, the $L^\infty$-boundedness of $\mathcal{W}_{-,i,1}^l$ is lost when $i \neq 1$ due to the presence of the  singular  term $\frac{1}{\theta_1 \sqrt{r_2}\langle \theta_1 - r_2 \rangle^{\frac{3}{2}}}$ in \eqref{eq:est for k(theta_1, r_2)}. However, we conjecture that this boundedness property may still hold under a more refined analysis of $\mathcal{W}_{-,i,1}^l(x,y)$.
		\end{remark}

		\bigskip
		\noindent{\bf Acknowledgments.}  S. Huang was supported by the National Natural Science Foundation of China under grants 12171178 and 12171442. Z. Wu was supported by the China Postdoctoral Science Foundation under grant 2022M721254. A.S. is supported by NSF-DMS Grant 2205931. 

		\providecommand{\bysame}{\leavevmode\hbox to3em{\hrulefill}\thinspace}
		\providecommand{\MR}{\relax\ifhmode\unskip\space\fi MR }
		\providecommand{\MRhref}[2]{%
			\href{http://www.ams.org/mathscinet-getitem?mr=#1}{#2}
		}
		\providecommand{\href}[2]{#2}

		
	\end{document}